\numberwithin{equation}{section}
\theoremstyle{plain}
\newtheorem{Theorem}{Theorem}[section]
\newtheorem{Definition}{Definition}[section]
\newtheorem{Proposition}{Proposition}[section]
\newtheorem{Lemma}{Lemma}[section]
\newtheorem{Corollary}{Corollary}[section]
\newtheorem{Remark}{Remark}[section]
\newtheorem{Assumption}{Assumption}[section]
\newcommand{\N}{\mathbb{N}}
\newcommand{\Z}{\mathbb{Z}}
\newcommand{\R}{\mathbb{R}}
\newcommand{\E}{\mathbb{E}}
\begin{document}
	
	\begin{frontmatter}
		\title{Weak dependence and GMM estimation of supOU and mixed moving average processes}
		\runtitle{Weak dependence and GMM estimation of MMA}

		\begin{aug}
			
			\author{\fnms{Imma Valentina} \snm{Curato}\ead[label=e1]{imma.curato@uni-ulm.de}}
			\and
			\author{\fnms{Robert} \snm{Stelzer}
				\ead[label=e3]{robert.stelzer@uni-ulm.de}}
			
			\address{Ulm University, Institute of Mathematical Finance,\\
				Helmholtzstra\ss e 18, 89069 Ulm, Germany.\\
				\printead{e1,e3}}

			\runauthor{I.V. Curato and R. Stelzer}

		\end{aug}

\begin{abstract}
We consider a mixed moving average (MMA) process $X$ driven by a L\'evy basis and prove that it is weakly dependent with rates computable in terms of the moving average kernel and the characteristic quadruple of the L\'evy basis. Using this property, we show conditions ensuring that sample mean and autocovariances of $X$ have a limiting normal distribution.
We extend these results to stochastic volatility models and then investigate a Generalized Method of Moments estimator for the supOU process and the supOU stochastic volatility model after choosing a suitable distribution for the mean reversion parameter. For these estimators, we analyze the asymptotic behavior in detail.
\end{abstract}

\begin{keyword}[class=MSC]
	\kwd[Primary ]{60E07}
	\kwd{60G10}
	\kwd{60G51}
	\kwd{60G57}
	\kwd[; secondary ]{62F12}
\end{keyword}

\begin{keyword}
	\kwd{weak dependence}
	\kwd{L\'evy basis}
	\kwd{generalized method of moments}
	\kwd{Ornstein-Uhlenbeck type process}
	\kwd{stochastic volatility}
\end{keyword}

\end{frontmatter}

\section{Introduction}

L\'evy-driven continuous-time moving average processes, i.e. processes $(X_t)_{t \in \R}$ of the form $X_t=\int_{\R} f(t-s) dL_s$ with $f$ a deterministic function and $L$ a L\'evy process, are frequently used to model time series, especially, when dealing with data observed at high frequency. Moreover, causal moving averages can be used to model the volatility process when the dynamics of a logarithmic financial asset price are modeled. Popular examples include, for instance CARMA processes \cite{B01,MS07}, the increments of fractionally integrated L\'evy processes \cite{M06} and non-Gaussian Ornstein-Uhlenbeck type processes \cite{BNS01} where $f(s)=\mathrm{e}^{as}  1_{[0,\infty)}(s)$ with $a \in \R^-$. By allowing $f$ to depend on a random parameter $A$ and replacing the L\'evy process by a L\'evy basis one arrives at so-called mixed moving averages (MMA in short) as for instance in \cite{BN01,BNS11,FS13}.

An important example of MMA are the supOU processes studied in \cite{BN01,BNS11,FK07,FS13}. In the univariate case, assume $\int_{|x|>1} \log(|x|) \, \nu(dx) < \infty$ and $\int_{\R^{-}} -\frac{1}{A} \pi(dA) < \infty$, where $\nu$ is a L\'evy measure and $\pi$ is the probability distribution on $\R^-$ of the random parameter $A$, see Definition 2.1 for details. If $\Lambda$ is a L\'evy basis on $\R$ with those characteristics, then the process
 \[
X_t=\int_{\R^{-}} \int_{-\infty}^t \mathrm{e}^{A (t-s)} \,\Lambda(dA,ds) \,\,\, \forall t \in \R,
\]
is called a supOU process. Whereas a non-Gaussian Ornstein-Uhlenbeck process necessarily exhibits autocorrelation $\mathrm{e}^{ah}$ for $h \in \N$, the supOU process has a flexible dependence structure. For example, its autocorrelations can show a polynomial decay depending on the probability distribution $\pi$. Moreover, when a discrete probability distribution $\pi$ for the random parameter $A$ is considered, we obtain a popular model used, for example, in stochastic volatility models \cite{BN01}, in modeling fractal activity times \cite{K14,L12} and in astrophysics \cite{KTM13}.
  
MMA processes can also be used, under suitable conditions, as building blocks for more complex models.
We study in this paper the class of MMA stochastic volatility models. 
An example of the class is the supOU SV model, defined in \cite{BNS11,BNS13}, where the log-price process (of some financial asset) is defined for $t \in \R^+$ as
\[
J_t=\int_0^t \sqrt{X_s} dW_s, \,\,\, J_0=0,
\] 
and $(W_s)_{s\in\R^+}$ is a standard Brownian motion independent of the process $(X_s)_{s\in \R^+}$ which is a non-negative supOU process. Some examples of applications of the supOU SV model can be found in \cite{BNV13,GS10,SZ15}. 

The aim of this paper is twofold. First, to show that sample moments of an MMA and of the returns of an MMA SV model  have a limiting normal distribution. 
Secondly, to develop a statistical estimation procedure for the MMA and MMA SV model in a semi-parametric framework, where the distribution of the random parameter $A$ is specified in detail, and establishing its asymptotic properties.

For this end, it is of high importance to understand the dependence structure of the class of MMA processes.
In \cite{FS13}, it is shown that an MMA process driven by a L\'evy basis is mixing. However, in order to prove distributional limit theorems which enable valid asymptotic inference stronger notions of asymptotic independence are needed. Often one applies strong mixing properties (see \cite{Doukhan1994,R56}) to this end. Usually they are established by using a Markovian representation and showing geometric ergodicity of it. In turn this requires often smoothness conditions on the driving random noise and it is well-known that even autoregressive processes of order one are not strongly mixing when the distribution of the noise is not sufficiently regular (see \cite{Andrews1984}). We want to obtain results for MMA processes in general, which typically have no suitable Markovian representation, and without regularity conditions on the driving L\'evy basis apart from moment conditions. As will become obvious later on, the weak dependence concepts introduced by Doukhan and Louhichi \cite{DL99} and Doukhan and Dedecker \cite{DD03}, respectively called $\eta$-weak dependence and $\theta$-weak dependence, are very suitable for our purposes. For an extensive introduction on the weak dependence of causal and non-causal processes we refer the reader to \cite{DD08}. We then show the asymptotic normality of the sample mean and the sample autocovariance functions of an MMA process in its non-causal and causal specification. Moreover for the MMA stochastic volatility models, we show the $\theta$-weak dependence of the return process and the distributional limit of its sample moments. 
In \cite{GL17,GL16,GL172}, the limiting behavior of integrated and partial sums of supOU processes is analyzed in relation to the growth rate of their moments, called intermittency when the grow rate is fast. This leads to some conclusions regarding their asymptotic finite dimensional distributions and to identify different limiting theorems depending on the short or long memory shown by the supOU process. In our paper, for short memory supOU processes and more general MMA and MMA SV model we can additionally give, exploiting the weak dependence properties, conditions under which functional central limit theorems hold in distribution as well as consider general moments. 

Later in the paper, we discuss a Generalized Method of Moments (GMM in short) procedure to estimate the parameters of a supOU and supOU SV model. Unfortunately, the classical and efficient maximum-likelihood approach seems not applicable in this case, since the density of the supOU processes is not known in general. However, the supOU process has a known moment structure and GMM estimators can be defined as in \cite{STW15}. In a semiparametric framework, we consider in detail the case in which the random parameter $A$ is Gamma distributed and the moment functions are known in closed form. For the GMM estimators of the supOU process and the return process of a supOU SV model we show the asymptotic normality of both estimators (whose consistency has been shown in \cite{STW15}). Finally, via an explicit computation of the third and fourth order cumulants of the supOU and return process, we give the explicit form of the asymptotic covariance matrices of the GMM estimators. 

Interestingly, our result can also be seen as a first step in obtaining an estimation theory for the ambit processes (homogeneous and stationary) which include an additional multiplicative random input in the definition of an MMA process, see \cite{BBV15,BHSS16,BNSA07}.

The paper is organized as follows. In Section \ref{sec1}, the definition of a L\'evy basis and MMA process is given. In Section \ref{sec2}, the weak dependence properties of an MMA process are discussed. In Section \ref{sec3}, the asymptotic distributions of the moments of non-causal and causal MMA processes are shown.
In Section \ref{sec4}, the definition of an MMA SV model is given and the $\theta$-weak dependence of the return process is analyzed along with its sample moments asymptotic. In Section \ref{sec5}, the asymptotic normality of the GMM estimators of the supOU process and of the supOU SV model is then proven.

\section{L\'evy bases and mixed moving average processes}
\label{sec1}
We start with some preliminary results leading to the definition of an MMA process.
Throughout, we assume that all random variables and processes are defined on a given complete probability space $(\Omega, \mathcal{A},\mathbb{P})$ equipped with a filtration when relevant.
Let $S$ denote a non-empty topological space, $\mathcal{B}(S)$ the Borel $\sigma$-field on $S$, $\pi$ some probability measure on $(S,\mathcal{B}(S))$ and
$\mathcal{B}_b(S \times \R)$ the bounded Borel sets of $S \times \R$. A L\'evy basis, which is also known as an infinitely divisible independently scattered random
measure, is defined as follows.
\begin{Definition}
\label{basis}
A family $\Lambda=\{\Lambda(B):\,B\, \in \mathcal{B}_b(S \times \R) \}$ of $\R^d$-valued random variables is called an $\R^d$-valued L\'evy basis on $S \times \R$ if:
\begin{itemize}
\item the distribution of $\Lambda(B)$ is infinitely divisible for all $B \in \mathcal{B}_b(S\times\R)$,
\item for arbitrary $n \in \N$ and pairwise disjoint sets $B_1,\ldots,B_n \in \mathcal{B}_b(S\times\R)$ the random variables $\Lambda(B_1),\ldots,\Lambda(B_n)$ are independent and 
\item for any pairwise disjoint sets $B_1,B_2,\ldots \in \mathcal{B}_b(S\times\R)$ with $\bigcup_{n \in \N} B_n \in \mathcal{B}_b(S\times\R)$
we have, almost surely, $\Lambda(\bigcup_{n \in \N} B_n)= \sum_{n\in\N} \Lambda(B_n)$.
\end{itemize}
\end{Definition}

We restrict ourselves to time-homogeneous and factorisable L\'evy bases, i.e. L\'evy bases with characteristic function
\begin{equation}
\label{carLB}
\E[\mathrm{e}^{\mathrm{i} \langle u, \Lambda(B) \rangle}]=\mathrm{e}^{\Phi(u)\Pi(B)}
\end{equation}
for all $u \in \R^d$ and $B \in \mathcal{B}_b(S \times \R)$, where $\Pi=\pi\times\lambda$ is the product of a probability measure $\pi$ on $S$ and the Lebesgue measure $\lambda$ on $\R$ and
$$
\Phi(u)=\mathrm{i} \langle \gamma, u \rangle - \frac{1}{2}\langle u, \Sigma u \rangle + \int_{\R^d} \mathrm{e}^{\mathrm{i}\langle u,x \rangle}-1-\mathrm{i}\langle u,x \rangle \mathbb{1}_{[0,1]}(\|x\|) \,\,\nu(dx)
$$

\noindent
is the cumulant transform of an infinitely divisible (i.d. in short) distribution with characteristic triplet $(\gamma, \Sigma,\nu)$, where $\gamma \in \R^d$, $\Sigma \in \mathbb{S}^{+}_d$ - i.e. the space of the positive semi-definite matrices - and $\nu$ is a L\'evy measure - a Borel measure on $\R^d$ with $\nu({0})=0$ and $\int_{\R^d} (\|x\|^2 \wedge 1) \nu(dx) < \infty$ . By $L$ we denote the underlying L\'evy process associated with $(\gamma,\Sigma,\nu)$ and given by $$L_t=\Lambda(S \times (0,t]) \,\, \textrm{and} \,\, L_{-t}=-\Lambda(S \times (-t,0)) \,\, \textrm{for} \,\, t \in \R^{+}.$$ The quadruple $(\gamma, \Sigma,\nu,\pi)$ determines the distribution of the L\'evy basis completely and therefore it is called the generating quadruple.

In the following, norms of vectors or matrices are denoted by $\| \cdot \|$. We are going to work especially with the Euclidean norm or its induced 
operator norm unless otherwise stated. However, due to the equivalence of all norms none of the results in the paper depends on the choice of the norm.
For more information on $\R^d$-valued L\'evy bases see \cite{P03} and \cite{RR89}.

Following \cite{P03}, it can be shown that a L\'evy basis has a L\'evy-It\^o decomposition.

\begin{Theorem}
Let $\Lambda$ be a homogeneous and factorisable $\R^d$-valued L\'evy basis on $S \times\R$ with generating quadruple $(\gamma, \Sigma,\nu,\pi)$. Then there exists a modification $\tilde{\Lambda}$ of $\Lambda$ which is also a L\'evy basis with generating quadruple $(\gamma, \Sigma,\nu,\pi)$ such that there exists an $\R^d$-valued
L\'evy basis $\tilde{\Lambda}^G$ on $S \times \R^d$ with generating quadruple $(0, \Sigma,0,\pi)$ and an independent Poisson random measure $\mu$ on 
$(\R^d \times S\times \R, \mathcal{B}(\R^d \times S\times \R))$ with intensity measure $\nu \times\pi \times \lambda$ which satisfy

\[
\tilde{\Lambda}(B)= \gamma(\pi \times \lambda)(B) + \tilde{\Lambda}^G(B)
 + \int_{\|x\|\leq 1} \int_B x \,(\mu(dx,dA,ds)-ds\pi(dA)\,\nu(dx))
\]
\begin{equation}
\label{itodec}
+ \int_{\|x\|> 1} \int_B x \,\,\mu(dx,dA,ds)
\end{equation}

for all $B \in \mathcal{B}_b( S\times \R)$ and all $\omega \in \Omega$.

Provided $\int_{\|x\|\leq 1} \|x\| \nu(dx) < \infty$, it holds that
\[
\tilde{\Lambda}(B)= \gamma_0(\pi \times \lambda)(B) +\tilde{\Lambda}^G(B) + \int_{\R^d}\int_B x \,\, \mu(dx,dA,ds)
\]
for all $B \in \mathcal{B}_b( S\times \R)$ with
\begin{equation}
\label{yei}
\gamma_0 : = \gamma - \int_{\|x\|\leq 1} x \,\nu(dx).
\end{equation}

Furthermore, the integral with respect to $\mu$ exists as a Lebesgue integral for all $\omega \in \Omega$.
\end{Theorem}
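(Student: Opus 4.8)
The plan is to lift the classical Lévy--Itô decomposition of a Lévy process to the level of independently scattered random measures, exploiting that $\Lambda$ is time-homogeneous and factorisable so that along the time coordinate the objects reduce to genuine Lévy processes. Since the statement only asks for a \emph{modification} $\tilde\Lambda$, the overall scheme is constructive: I would build the right-hand side of (\ref{itodec}) as a bona fide homogeneous factorisable Lévy basis, check by matching characteristic functions that it has the same law as $\Lambda$ on every $B$ and jointly on disjoint families, and conclude that it is a modification of $\Lambda$ with the same generating quadruple.

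First I would isolate the Gaussian component. Define $\tilde\Lambda^G$ as the centred Gaussian independently scattered random measure with $\tilde\Lambda^G(B)$ distributed as $N(0,\Sigma\,\Pi(B))$; using (\ref{carLB}) one verifies directly that $\tilde\Lambda^G$ is a Lévy basis with generating quadruple $(0,\Sigma,0,\pi)$. Next I would construct the Poisson random measure $\mu$ of jumps. The key difficulty is that $\Lambda$ carries no sample paths indexed by the spatial variable $A\in S$, so ``jumps'' must be read off from the time coordinate: for a set $A\in\mathcal{B}(S)$ the process $t\mapsto\Lambda(A\times(0,t])$ is a genuine Lévy process (by independent scattering and time-homogeneity) whose jump measure is a Poisson random measure on $\R^d\times\R$. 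Running this along a sequence of finite measurable partitions of $S$ that refine to points, and checking consistency of the resulting jump counts under refinement, one obtains a single Poisson random measure $\mu$ on $\R^d\times S\times\R$; its intensity is identified as $\nu\times\pi\times\lambda$ from the fact that each $\Lambda(B)$ is infinitely divisible with Lévy measure $\Pi(B)\,\nu=(\pi\times\lambda)(B)\,\nu$.

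With $\tilde\Lambda^G$ and $\mu$ in hand I would assemble the four terms of (\ref{itodec}): the drift $\gamma\,(\pi\times\lambda)(B)$, the Gaussian part $\tilde\Lambda^G(B)$, the compensated small-jump integral $\int_{\|x\|\le 1}\int_B x\,(\mu(dx,dA,ds)-ds\,\pi(dA)\,\nu(dx))$ defined as an $L^2$ limit of the compensated integrals over $\{\varepsilon<\|x\|\le 1\}$, and the large-jump integral $\int_{\|x\|>1}\int_B x\,\mu(dx,dA,ds)$, which is an almost-sure finite sum because $\nu$ is finite on $\{\|x\|>1\}$ and $\Pi(B)<\infty$. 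Matching the characteristic function of this sum against $\mathrm{e}^{\Phi(u)\Pi(B)}$ term by term, via the Lévy--Khintchine formula and the intensity of $\mu$, shows that the constructed random measure equals $\Lambda$ in distribution; the independence of $\tilde\Lambda^G$ and $\mu$ is inherited from the independence of the Gaussian and jump parts of an infinitely divisible law and is preserved by the construction. Finally, when $\int_{\|x\|\le 1}\|x\|\,\nu(dx)<\infty$ the compensator $\int_{\|x\|\le 1} x\,\nu(dx)$ may be split off and absorbed into $\gamma$, producing $\gamma_0$ as in (\ref{yei}) and turning the jump part into an ordinary $\mu$-integral, which then exists pathwise as a Lebesgue integral since it is again a finite sum on bounded regions.

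The main obstacle I anticipate is the rigorous construction of $\mu$ and the proof that it is genuinely Poisson with the stated intensity: the passage from the time-indexed jump measures of the processes $\Lambda(A\times(0,\cdot\,])$ to a jointly defined Poisson random measure on $\R^d\times S\times\R$ requires a careful exhaustion argument in the spatial variable together with verification of independence over disjoint sets and of the compensation formula. The secondary technical point is the $L^2$ and martingale control of the compensated small-jump integral needed to define it and to justify the characteristic-function computation.
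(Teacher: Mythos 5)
The paper does not prove this theorem itself; it is quoted from Pedersen \cite{P03}, whose argument is exactly the one you sketch in your middle paragraph: read off the jumps of the L\'evy processes $t\mapsto\Lambda(A\times(0,t])$, assemble them into a Poisson random measure on $\R^d\times S\times\R$, and identify the residual as the Gaussian part. So your overall strategy is the intended one. There is, however, a genuine gap in how you close the argument. You propose to \emph{build} the right-hand side of \eqref{itodec} -- with an abstractly defined Gaussian basis $\tilde\Lambda^G$ with law $N(0,\Sigma\,\Pi(B))$ -- and then ``conclude that it is a modification of $\Lambda$'' by matching characteristic functions. That inference is invalid: a modification must satisfy $\tilde\Lambda(B)=\Lambda(B)$ a.s.\ for every $B$, on the same probability space, whereas matching characteristic functions only gives equality in distribution. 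A freshly constructed random measure with the same law is a version in law, not a modification. To get a modification you must run the construction entirely \emph{from} $\Lambda$: define $\mu$ as the jump measure of (a suitable version of) $\Lambda$, subtract the drift and the two jump integrals from $\tilde\Lambda(B)$, and then \emph{prove} that the residual is a Gaussian L\'evy basis with quadruple $(0,\Sigma,0,\pi)$ independent of $\mu$. That independence-and-Gaussianity step is the real content of L\'evy--It\^o and is not supplied by a characteristic-function match of a separately built object.

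A second, related point: the modification $\tilde\Lambda$ is not a cosmetic allowance but is forced at the very first step of your own construction. For an arbitrary L\'evy basis the processes $t\mapsto\Lambda(A\times(0,t])$ have stationary independent increments and are continuous in probability, but they need not have c\`adl\`ag paths, so their ``jumps'' are not defined until one passes to a c\`adl\`ag version -- and doing this consistently across a countable generating family of sets $A\in\mathcal{B}(S)$ (your ``refinement to points'' does not make sense for a general topological space $S$; one needs a countable algebra and a Dynkin-class extension) is precisely where $\tilde\Lambda$ comes from. Your proposal treats the existence of the jump measures as given and the modification as a conclusion, which reverses the logical order. The remaining ingredients -- the $L^2$ construction of the compensated small-jump integral, the a.s.\ finiteness of the large-jump sum on bounded $B$, and the absorption of $\int_{\|x\|\le 1}x\,\nu(dx)$ into $\gamma_0$ in the finite-variation case -- are stated correctly.
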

Here an $\R^d$-valued L\'evy basis $\tilde{\Lambda}$ on $S \times \R$ is called a modification of a L\'evy basis 
$\Lambda$ if $\tilde{\Lambda}(B)=\Lambda(B)$ a.s. for all $B \in \mathcal{B}_b(S \times \R)$. We refer the reader to \cite[Section 2.1]{JS03} for
further details on the integration with respect to Poisson random measures.

We also recall the following multivariate extension of \cite[Theorem 2.7]{RR89}.
We denote by $A^{\prime}$ the transpose of a matrix $A$ in what follows.
\begin{Theorem}
\label{uno}
Let $\Lambda$ be an $\R^d$-valued L\'evy basis with generating quadruple $(\gamma,\Sigma, \nu,\pi)$, $f: S \times \R \to M_{n\times d}(\R)$ be a $\mathcal{B}(S \times \R)$-measurable function. Then $f$ is $\Lambda$-integrable as a limit in probability in the sense of Rajput and Rosi\'nski \cite{RR89}, if and only if
{\small \begin{equation} 
\label{ass1}
\int_S \int_{\R} \Big\|f(A,s)\gamma+ \int_{\R^d} f(A,s) x \Big(\mathbb{1}_{[0,1]}(\|f(A,s)x\|)-\mathbb{1}_{[0,1]}(\|x\|)\Big) \,\, \nu(dx) \Big\| \, ds  \, \pi(dA) <\infty,
\end{equation}
\begin{equation}
\label{ass2}
\int_S \int_{\R} \|f(A,s)\Sigma f(A,s)^{\prime}\| \, ds \, \pi(dA) <\infty,
\end{equation}
 and
\begin{equation}
\label{ass3}
\int_S \int_{\R} \int_{\R^d} \Big(1 \wedge \|f(A,s)x\|^2 \Big) \, \nu(dx) \, ds \, \pi(dA) < \infty.
\end{equation}}
If $f$ is $\Lambda$-integrable, the distribution of $\int_S \int_{\R} f(A,s)  \, \Lambda(dA,ds)$ is infinitely divisible with characteristic triplet $(\gamma_{int},\Sigma_{int},\nu_{int})$ given by
{\small\begin{equation}
 \label{gamma}
 \gamma_{int}=\int_S \int_{\R} \Big(f(A,s)\gamma+ \int_{\R^d} f(A,s) x \Big(\mathbb{1}_{[0,1]}(\|f(A,s)x\|)-\mathbb{1}_{[0,1]}(\|x\|)\Big) \,\, \nu(dx) \Big) \, ds \, \pi(dA)
\end{equation}}
\begin{equation}
 \label{sigma}
 \Sigma_{int}= \int_S \int_{\R} f(A,s)\Sigma f(A,s)^{\prime} \, ds \, \pi(dA)
\end{equation}
and
\begin{equation}
 \label{nu}
 \nu_{int}(B) = \int_S \int_{\R} \int_{\R^d} \mathbb{1}_B(f(A,s)x) \, \nu(dx) \, ds \, \pi(dA)
\end{equation}
for all Borel sets $B \subseteq \R^n \setminus \{0\}$.
\end{Theorem}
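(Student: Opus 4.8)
The plan is to follow the approximation scheme of Rajput and Rosiński \cite{RR89}, adapting it to the matrix-valued integrand and to the product control measure $\Pi=\pi\times\lambda$. First I would treat simple functions $f=\sum_{j=1}^{m} M_j\mathbb{1}_{B_j}$ with $M_j\in M_{n\times d}(\R)$ and pairwise disjoint $B_j\in\mathcal{B}_b(S\times\R)$, for which the integral is naturally $\sum_j M_j\Lambda(B_j)$. Using the independence of the $\Lambda(B_j)$ together with \eqref{carLB} and the transpose identity $\langle u,M_j\Lambda(B_j)\rangle=\langle M_j'u,\Lambda(B_j)\rangle$, the characteristic function becomes
$$
\E\big[\mathrm{e}^{\mathrm{i}\langle u,\int f\,d\Lambda\rangle}\big]=\exp\Big(\sum_j\Phi(M_j'u)\,\Pi(B_j)\Big)=\exp\Big(\int_S\int_\R\Phi(f(A,s)'u)\,ds\,\pi(dA)\Big),
$$
which reduces the whole problem to analysing the cumulant transform $\int_S\int_\R\Phi(f(A,s)'u)\,ds\,\pi(dA)$.

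Next I would expand $\Phi(f'u)$ via the explicit L\'evy--Khintchine form of $\Phi$ and the identities $\langle\gamma,f'u\rangle=\langle f\gamma,u\rangle$, $\langle f'u,\Sigma f'u\rangle=\langle u,f\Sigma f'u\rangle$ and $\langle f'u,x\rangle=\langle u,fx\rangle$, rewriting everything in the variable $u$. This lets me read off the candidate Gaussian part $f\Sigma f'$, giving \eqref{sigma}, and the candidate jump measure as the pushforward of $\nu$ under $x\mapsto f(A,s)x$, giving \eqref{nu}. The one genuinely delicate point is the truncation: $\Phi$ compensates jumps with $\mathbb{1}_{[0,1]}(\|x\|)$ in the source space, whereas the triplet of the integral must compensate with $\mathbb{1}_{[0,1]}(\|f(A,s)x\|)$ in the target space. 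Adding and subtracting the target-space compensator produces exactly the correction term $f(A,s)x\big(\mathbb{1}_{[0,1]}(\|f(A,s)x\|)-\mathbb{1}_{[0,1]}(\|x\|)\big)$ appearing in both \eqref{gamma} and the integrability condition \eqref{ass1}, and I would check that this difference is $\nu$-integrable because it is supported on a bounded annulus in $x$ and bounded there.

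With the triplet identified, the three hypotheses acquire transparent meaning: \eqref{ass1} ensures absolute convergence of the drift integral defining $\gamma_{int}$, \eqref{ass2} the convergence of $\Sigma_{int}$, and \eqref{ass3} is precisely the requirement that $\nu_{int}$ be a L\'evy measure, i.e. $\int(1\wedge\|y\|^2)\,\nu_{int}(dy)<\infty$. To pass from simple functions to a general measurable $f$, I would pick simple functions $f_k\to f$ and invoke the Rajput--Rosi\'nski notion of integrability as a limit in probability: by L\'evy's continuity theorem, $\int f_k\,d\Lambda$ converges in probability to an infinitely divisible limit if and only if the cumulant transforms $\int_S\int_\R\Phi(f_k'u)\,ds\,\pi(dA)$ converge to a function continuous at the origin. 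The crux, and the step I expect to be the main obstacle, is showing that this convergence is equivalent to the simultaneous finiteness of \eqref{ass1}--\eqref{ass3}; here one must separate small and large jumps, use \eqref{ass3} to dominate the compensated small-jump integral and \eqref{ass1} to absorb the truncation correction, exactly mirroring the scalar computation of \cite{RR89}. Once the limit is identified, its characteristic triplet follows by dominated convergence in \eqref{gamma}--\eqref{nu}.
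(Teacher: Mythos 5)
The paper offers no proof of this theorem at all: it is stated as a recalled multivariate extension of Theorem 2.7 of Rajput--Rosi\'nski, and your sketch (simple functions, the identity $\E[\mathrm{e}^{\mathrm{i}\langle u,\int f\,d\Lambda\rangle}]=\exp\bigl(\int_S\int_{\R}\Phi(f(A,s)'u)\,ds\,\pi(dA)\bigr)$, re-truncation of the jump part, and passage to the limit via L\'evy continuity) is precisely the argument that citation stands for, so this counts as essentially the same approach. One small imprecision worth fixing: the set where the truncation correction is nonzero is not contained in a bounded annulus, since $\{\|f(A,s)x\|\le 1<\|x\|\}$ can be unbounded (e.g.\ when $f(A,s)$ has nontrivial kernel); the correction is nonetheless $\nu$-integrable for fixed $(A,s)$ because on that set the integrand has norm at most $1$ and the set has finite $\nu$-measure, while on $\{\|x\|\le 1<\|f(A,s)x\|\}$ one has $\|x\|\ge\|f(A,s)\|^{-1}$ so the set again has finite $\nu$-measure and the integrand is bounded by $\|f(A,s)\|$.
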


Implicitly, we assume that $\Sigma_{int}$ or $\nu_{int}$ are different from zero throughout the paper to rule out the deterministic case.

When the underlying L\'evy process has finite variation we can do $\omega$-wise Lebesgue integration; that is, the integral can be obtained as a Lebesgue integral for each $\omega \in \Omega$.

\begin{Corollary}
\label{due}
Let $\Lambda$ be an $\R^d$-valued L\'evy basis with characteristic quadruple $(\gamma,0,\nu,\pi)$ satisfying $\int_{\|x\| \leq 1} \|x\| \nu(dx) < \infty$, and define $\gamma_0$ as in (\ref{yei}), that is $\Phi(u)=i \langle u, \gamma_0 \rangle+ \int (\mathrm{e}^{\mathrm{i}\langle u, x \rangle}-1) \nu(dx)$.
Furthermore, let $f:S \times \R \to M_{n \times d}$ be a $\mathcal{B}(S \times \R)$-measurable function satisfying
\begin{equation}
\label{ass4}
\int_S \int_{\R} \|f(A,s) \gamma_0\| ds  \, \pi(dA) <\infty,
\end{equation}
and
\begin{equation}
\label{ass5}
\int_S \int_{\R} \int_{\R^d} \Big(1 \wedge \|f(A,s)x\| \Big) \, \nu(dx) \, ds \, \pi(dA) < \infty.
\end{equation}
Then,
\[
\int_S \int_{\R} f(A,s) \Lambda(dA,ds) =\int_S \int_{\R} f(A,s) \, \gamma_0 \,ds \pi(dA)
\]
\begin{equation}
\label{yei2}
+\int_{\R^d} \int_S \int_{\R} f(A,s) x \,\mu(dx, dA,ds),
\end{equation}
and the right hand side is a Lebesgue integral for every $\omega \in \Omega$.
Moreover, the distribution $\int_S\int_{\R} f(A,s) \,\Lambda(dA,ds)$ is infinitely divisible with characteristic function
{\small
\[
\mathbb{E}\Big(\exp\Big(\mathrm{i} \langle u, \int_S\int_{\R} f(A,s) \, \Lambda(dA,ds) \rangle\Big) \Big)
= \mathrm{e}^{\mathrm{i} \langle u, \gamma_{int,0} \rangle +\int_{\R^n} (\mathrm{e}^{\mathrm{i} \langle u,x \rangle}-1) \nu_{int}(dx)} \,\,\, u\in\R^n,
\]}
where
\[
\gamma_{int}=\int_S \int_{\R} f(A,s) \gamma_0 \,\, ds \, \pi(dA),
\]
\[
\nu_{int}(B)=\int_S \int_{\R} \int_{\R^d} \mathbb{1}_B(f(A,s)x) \,\,\nu(dx)\, ds \, \pi(dA) \,\,\, \forall B \in \R^n \setminus \{0\}.
\]
\end{Corollary}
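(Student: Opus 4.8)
The plan is to derive this as a corollary of Theorem \ref{uno} together with the finite-variation form of the L\'evy--It\^o decomposition stated above. First I would verify that the hypotheses (\ref{ass4}) and (\ref{ass5}) imply the Rajput--Rosi\'nski integrability conditions (\ref{ass1})--(\ref{ass3}), so that $\int_S\int_\R f\,\Lambda(dA,ds)$ exists in the sense of Theorem \ref{uno} and its triplet is known. Condition (\ref{ass2}) is vacuous since $\Sigma=0$, and (\ref{ass3}) follows from (\ref{ass5}) because $1\wedge\|f(A,s)x\|^2\le 1\wedge\|f(A,s)x\|$ pointwise. The only nontrivial point is (\ref{ass1}): substituting $\gamma=\gamma_0+\int_{\|x\|\le1}x\,\nu(dx)$ and combining the two indicator terms, the integrand of (\ref{ass1}) collapses to
\[
\Big\|f(A,s)\gamma_0+\int_{\R^d}f(A,s)x\,\mathbb{1}_{[0,1]}(\|f(A,s)x\|)\,\nu(dx)\Big\|\le \|f(A,s)\gamma_0\|+\int_{\R^d}\big(1\wedge\|f(A,s)x\|\big)\,\nu(dx),
\]
whose $ds\,\pi(dA)$-integral is finite by (\ref{ass4}) and (\ref{ass5}).

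Next I would read off the stated characteristic function from the triplet $(\gamma_{int},0,\nu_{int})$ delivered by Theorem \ref{uno}. Condition (\ref{ass5}) additionally gives $\int_{\|y\|\le1}\|y\|\,\nu_{int}(dy)<\infty$, so the integrated distribution has finite variation and admits a drift representation, its drift being $\gamma_{int}-\int_{\|y\|\le1}y\,\nu_{int}(dy)$. Using exactly the same algebraic simplification as above, this difference equals $\int_S\int_\R f(A,s)\gamma_0\,ds\,\pi(dA)$, which identifies $\gamma_{int,0}$ and yields the claimed characteristic function with $\nu_{int}$ as in (\ref{nu}).

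It then remains to justify the pathwise representation (\ref{yei2}). Here I would invoke the decomposition theorem in its finite-variation form, $\tilde\Lambda(B)=\gamma_0(\pi\times\lambda)(B)+\int_{\R^d}\int_B x\,\mu(dx,dA,ds)$ (no Gaussian term, as $\Sigma=0$). The first summand contributes the deterministic Lebesgue integral $\int_S\int_\R f(A,s)\gamma_0\,ds\,\pi(dA)$, finite by (\ref{ass4}). For the Poisson part I would apply the classical criterion that, for a Poisson random measure with intensity $\nu\times\pi\times\lambda$, the integral $\int\|f(A,s)x\|\,\mu(dx,dA,ds)$ is finite almost surely precisely when $\int(1\wedge\|f(A,s)x\|)\,\nu(dx)\,ds\,\pi(dA)<\infty$, i.e. under (\ref{ass5}); indeed the atoms with $\|f(A,s)x\|\ge1$ are a.s. finite in number, while the contribution of the region $\|f(A,s)x\|<1$ has finite expectation. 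Hence $\int_{\R^d}\int_S\int_\R f(A,s)x\,\mu(dx,dA,ds)$ is an absolutely convergent Lebesgue integral for almost every (and, after modification, every) $\omega$.

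The main obstacle is to show that this $\omega$-wise Lebesgue integral coincides with the Rajput--Rosi\'nski limit-in-probability integral from the first step. I would establish this by approximating $f$ with simple functions $f_n$: for simple integrands the identity (\ref{yei2}) is immediate from the decomposition and linearity, the integrals $\int f_n\,\Lambda(dA,ds)$ converge in probability to $\int f\,\Lambda(dA,ds)$, and the corresponding pathwise integrals converge by dominated convergence using the envelopes furnished by (\ref{ass4})--(\ref{ass5}). Matching the two limits along a common subsequence yields the equality almost surely, which is precisely (\ref{yei2}). The delicate point is to choose the approximating sequence so that both convergences hold at once, and to discard the null set on which pathwise absolute convergence might fail.
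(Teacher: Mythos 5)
Your proposal is correct and follows the same route the paper takes: the paper disposes of this corollary in two sentences, asserting that it ``follows immediately from the L\'evy--It\^o decomposition (\ref{itodec}) and the usual integration theory with respect to a Poisson random measure,'' with (\ref{yei2}) holding because no compensation of small jumps is needed when $\int_{\|x\|\leq 1}\|x\|\,\nu(dx)<\infty$. What you have written is simply a careful expansion of that remark --- verifying (\ref{ass1})--(\ref{ass3}) from (\ref{ass4})--(\ref{ass5}), identifying the drift, and reconciling the pathwise Poisson integral with the Rajput--Rosi\'nski limit in probability --- and all of those steps are sound.
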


The above corollary follows immediately from the L\'evy-It\^o decomposition (\ref{itodec}) and the usual integration theory with respect to a Poisson random measure.
We notice that the result (\ref{yei2}) is an immediate consequence of working with an underlying L\'evy process of finite variation, as no compensation for the small jumps is needed if $\int_{\|x\| \leq 1} \|x\| \nu(dx) < \infty$.

We can now introduce an MMA process driven by a L\'evy basis.
\begin{Definition}
 Let $\Lambda$ be an $\R^d$-valued L\'evy basis on $S \times \R$ and let $f: S \times \R \to M_{n\times d}(\R)$ be a $\mathcal{B}(S \times \R)$-measurable function
satisfying assumptions (\ref{ass1}), (\ref{ass2}) and (\ref{ass3}). Then, the process 
 \begin{equation}
 \label{mma}
 X_t\colon = \int_S \int_{\R} f(A,t-s)\,\, \Lambda(dA,ds) 
 \end{equation}
is well defined for each $t\in \R$, infinitely divisible and strictly stationary. It is called a $n$-dimensional mixed moving average process and $f$ its kernel function. 
\end{Definition}

We conclude the section giving sufficient conditions ensuring the finiteness of moments of an MMA process.

\begin{Proposition}
\label{moment1}
Let $X$ be an n-dimensional MMA process driven by a L\'evy basis $\Lambda$ satisfying the conditions of Theorem \ref{uno}.
\begin{enumerate}[(i)]
\item If $\int_{\|x\|>1} \| x\|^r \,\nu(dx) < \infty \, \textrm{and} \, f \in L^r(S \times \R, \pi \otimes \lambda)$
for $r \in [2, \infty)$, then $\E[\|X_t\|^r]<\infty$.
\item If $\int_{\|x\|>1} \| x\|^r \,\nu(dx) < \infty \, \textrm{and} \, f \in L^r(S \times \R, \pi \otimes \lambda)\cap L^2(S \times \R, \pi \otimes \lambda) $ for $r \in (0, 2)$, then $\E[\|X_t\|^r]<\infty$.
\end{enumerate}
\end{Proposition}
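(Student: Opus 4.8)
The plan is to exploit that $X_t$ is infinitely divisible, with a Lévy measure $\nu_{int}$ computed explicitly in Theorem \ref{uno}, and to reduce the moment bound to an integrability property of $\nu_{int}$. Here I would invoke the classical characterization of absolute moments of an infinitely divisible law: if $Y$ is infinitely divisible on $\R^n$ with Lévy measure $\nu_Y$, then for every $r>0$ one has $\E[\|Y\|^r]<\infty$ if and only if $\int_{\|y\|>1}\|y\|^r\,\nu_Y(dy)<\infty$ (this follows from Sato's theorem on submultiplicative moment functions, applied to $y\mapsto(1\vee\|y\|)^r$). Since by the Definition of an MMA process $X_t$ is infinitely divisible with Lévy measure $\nu_{int}$ given in (\ref{nu}), it suffices to prove, in both cases (i) and (ii), that
\[
I:=\int_{\|y\|>1}\|y\|^r\,\nu_{int}(dy)=\int_S\int_{\R}\int_{\R^d}\|f(A,s)x\|^r\,\mathbb{1}_{\{\|f(A,s)x\|>1\}}\,\nu(dx)\,ds\,\pi(dA)<\infty,
\]
where I have substituted (\ref{nu}) and applied the image-measure formula.

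Next I would bound $I$ by splitting the inner $x$-integration into the large-jump region $\{\|x\|>1\}$ and the small-jump region $\{\|x\|\le1\}$, using the operator-norm inequality $\|f(A,s)x\|\le\|f(A,s)\|\,\|x\|$; by equivalence of norms, $f\in L^r$ (resp. $f\in L^2$) is exactly $\int_S\int_{\R}\|f(A,s)\|^r\,ds\,\pi(dA)<\infty$ (resp. with exponent $2$). On $\{\|x\|>1\}$, in both cases, one estimates $\|f(A,s)x\|^r\le\|f(A,s)\|^r\|x\|^r$, so Tonelli's theorem gives
\[
\int_S\int_{\R}\int_{\|x\|>1}\|f(A,s)x\|^r\,\nu(dx)\,ds\,\pi(dA)\le\Big(\int_S\int_{\R}\|f(A,s)\|^r\,ds\,\pi(dA)\Big)\Big(\int_{\|x\|>1}\|x\|^r\,\nu(dx)\Big),
\]
which is finite by $f\in L^r$ and the hypothesis $\int_{\|x\|>1}\|x\|^r\,\nu(dx)<\infty$. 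This disposes of the large-jump part.

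The hard part will be the small-jump region $\{\|x\|\le1,\ \|f(A,s)x\|>1\}$, where arbitrarily small jumps of the driving basis are amplified into jumps of size exceeding $1$ wherever the kernel is large, so the truncation condition (\ref{ass3}) alone does not yield the power moment. Here I would treat the two cases separately. For $r\ge2$ (case (i)) I would split the exponent as $\|f(A,s)x\|^r=\|f(A,s)x\|^{\,r-2}\,\|f(A,s)x\|^2$ and use $\|f(A,s)x\|\le\|f(A,s)\|$ (since $\|x\|\le1$) to get $\|f(A,s)x\|^r\le\|f(A,s)\|^r\|x\|^2$; for $r\in(0,2)$ (case (ii)) I would instead use that $\|f(A,s)x\|>1$ forces $\|f(A,s)x\|^r\le\|f(A,s)x\|^2\le\|f(A,s)\|^2\|x\|^2$. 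In either case, dropping the indicator and applying Tonelli bounds the small-jump contribution by
\[
\Big(\int_{\|x\|\le1}\|x\|^2\,\nu(dx)\Big)\int_S\int_{\R}\|f(A,s)\|^{p}\,ds\,\pi(dA),\qquad p=r\ (\text{case (i)}),\quad p=2\ (\text{case (ii)}),
\]
which is finite because $\int_{\|x\|\le1}\|x\|^2\,\nu(dx)<\infty$ for every Lévy measure, and because $f\in L^r$ in case (i), respectively $f\in L^2$ in case (ii). Combining the two regions yields $I<\infty$, hence $\E[\|X_t\|^r]<\infty$. The extra hypothesis $f\in L^2$ in case (ii) enters precisely to control this amplified-small-jump term, explaining why it is superfluous once $r\ge2$.
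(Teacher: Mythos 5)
Your proposal is correct and follows essentially the same route as the paper: reduce to $\int_{\|y\|>1}\|y\|^r\,\nu_{int}(dy)<\infty$ via Sato's moment criterion for infinitely divisible laws, then split the inner integral into $\{\|x\|>1\}$ and $\{\|x\|\le 1,\ \|f(A,s)x\|>1\}$ and bound by $\|f(A,s)\|^{r}\|x\|^{r}$ and $\|f(A,s)\|^{r\vee 2}\|x\|^{2}$ respectively. Your write-up is in fact somewhat more detailed than the paper's, which leaves the final integrability of the two pieces to the reader.
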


\begin{proof}
Following \cite[Corollary 25.8]{SA}, we have to show that $\int_{\|x\|>1} \| x\|^r \,\nu_{int}(dx) < \infty$. 
Since

\[
\int_{\|x\|>1} \| x\|^r \,\nu_{int}(dx)
\]
\[
= \int_S \int_{\R} \int_{\R^d}  \|f(A,s) x\|^r 1_{(1,\infty)} (\|f(A,s)x\|) \nu(dx) ds \pi(dA)
\]
\[
\leq \int_S \int_{\R} \int_{\R^d}  \|f(A,s)\|^r \|x\|^r 1_{(1,\infty)} (\|x\|) \nu(dx) ds \pi(dA)
\]
\[
+\int_S \int_{\R} \int_{\R^d}  \|f(A,s)\|^{r \vee 2} \|x\|^{r \vee 2} 1_{(0,1)}(\|x\|) 1_{(1,\infty)}(\|f(A,s)x\|)  \nu(dx) ds \pi(dA),
\]
we can conclude that (i) and (ii) follow, given that $\nu$ is a L\'evy measure.

\end{proof}

If the underlying L\'evy process $L$ is of finite variation, an analogous proof gives the following results.

\begin{Corollary}
\label{moment2}
Let $X$ be an n-dimensional MMA process driven by a L\'evy basis $\Lambda$ satisfying the conditions of Corollary \ref{due}.
\begin{enumerate}[(i)]
\item If $\int_{\|x\|>1} \| x\|^r \,\nu(dx) < \infty \, \textrm{and} \, f \in L^r(S \times \R, \pi \otimes \lambda)$
for $r \in [1, \infty)$, then $\E[\|X_t\|^r]<\infty$.
\item If $\int_{\|x\|>1} \| x\|^r \,\nu(dx) < \infty \, \textrm{and} \, f \in L^r(S \times \R, \pi \otimes \lambda)\cap L^1(S \times \R, \pi \otimes \lambda) $ for $r \in (0, 1)$, then $\E[\|X_t\|^r]<\infty$.
\end{enumerate}
\end{Corollary}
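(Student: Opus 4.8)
The plan is to mirror the proof of Proposition~\ref{moment1}, the only change being forced by the finite variation hypothesis. By Corollary~\ref{due}, under the stated assumptions the random variable $X_t$ is infinitely divisible with L\'evy measure
\[
\nu_{int}(B)=\int_S\int_{\R}\int_{\R^d} \mathbb{1}_B(f(A,s)x)\,\nu(dx)\,ds\,\pi(dA).
\]
By \cite[Corollary 25.8]{SA}, for every $r>0$ one has $\E[\|X_t\|^r]<\infty$ if and only if $\int_{\|x\|>1}\|x\|^r\,\nu_{int}(dx)<\infty$, so it suffices to establish finiteness of this tail integral.

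Using the expression for $\nu_{int}$ and Tonelli's theorem, I would write
\[
\int_{\|x\|>1}\|x\|^r\,\nu_{int}(dx)=\int_S\int_{\R}\int_{\R^d}\|f(A,s)x\|^r\,\mathbb{1}_{(1,\infty)}(\|f(A,s)x\|)\,\nu(dx)\,ds\,\pi(dA),
\]
and split the inner integral over $\{\|x\|>1\}$ and $\{\|x\|\le 1\}$. On $\{\|x\|>1\}$ the bound $\|f(A,s)x\|^r\le\|f(A,s)\|^r\|x\|^r$ factorises the integral, and finiteness follows from $f\in L^r(S\times\R,\pi\otimes\lambda)$ together with $\int_{\|x\|>1}\|x\|^r\,\nu(dx)<\infty$.

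The decisive region is the small-jump part $\{\|x\|\le 1\}$, on which the indicator forces $\|f(A,s)x\|>1$. The key observation is that, the base exceeding one, $\|f(A,s)x\|^r\le\|f(A,s)x\|^{r\vee 1}\le\|f(A,s)\|^{r\vee 1}\|x\|^{r\vee 1}\le\|f(A,s)\|^{r\vee 1}\|x\|$, where the last step uses $\|x\|\le 1$ and $r\vee 1\ge 1$. The finite variation assumption $\int_{\|x\|\le 1}\|x\|\,\nu(dx)<\infty$ then renders this contribution finite precisely when $f\in L^{r\vee 1}(S\times\R,\pi\otimes\lambda)$. Since $r\vee 1=r$ for $r\ge 1$ and $r\vee 1=1$ for $r<1$, this gives case (i) under $f\in L^r$ and case (ii) under $f\in L^r\cap L^1$.

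This argument is entirely parallel to Proposition~\ref{moment1}; the whole point is that the exponent $2$ appearing there (dictated by the generic L\'evy integrability $\int(\|x\|^2\wedge 1)\,\nu(dx)<\infty$) is replaced throughout by $1$, since finite variation upgrades the integrability at the origin to first order. I do not anticipate a genuine obstacle: once the $r\vee 1$ bound on the small-jump region is in place, every remaining estimate reduces to Tonelli's theorem and the two moment hypotheses on $\nu$ and $f$.
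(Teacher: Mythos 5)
Your argument is correct and is exactly the "analogous proof" the paper alludes to: you follow the proof of Proposition \ref{moment1} verbatim, replacing the exponent $r\vee 2$ on the small-jump region by $r\vee 1$, which is justified there by the finite variation hypothesis $\int_{\|x\|\le 1}\|x\|\,\nu(dx)<\infty$. The bound $\|f(A,s)x\|^r\le\|f(A,s)\|^{r\vee 1}\|x\|$ on $\{\|x\|\le 1,\ \|f(A,s)x\|>1\}$ and the factorisation of the large-jump part are precisely the intended steps.
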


\section{Weak dependence properties of a mixed moving average process}
\label{sec2}

Let  $(\mathcal{A}_t)_{t \in \R}$ be the filtration generated by $\Lambda$ defined as the $\sigma$-algebras $\mathcal{A}_t$ generated by the set of random variables $\{\Lambda(B): B \in \mathcal{B}(S \times (-\infty,t])\}$ for $t \in \R$. If an MMA process is adapted to $(\mathcal{A}_t)_{t \in \R}$, we call it causal. Otherwise it is referred to as being non-causal.

In the following we will refer by $\N$ to the set of the non negative integers, by $\N^*$ to the set of the positive integers, by $\R^-$ to the set of negative real numbers and by $\R^+$ to the set of the non negative real numbers.

\subsection{Non-causal case}

Let 
\[
\mathcal{F}=\bigcup_{u \in \mathbb{N}^*} \mathcal{F}_u 
\]
where $\mathcal{F}_u$ is the class of bounded functions from $(\R^n)^u$ to $\R$ Lipschitz with respect to a distance $\delta_1$ on $(\R^n)^u$ defined by
\begin{equation}
\label{dist}
\delta_1(x^*,y^*)= \sum_{i=1}^u \delta(x_i,y_i),
\end{equation}
where $x^*=(x_1,\ldots,x_u)$ and $y^*=(y_1,\ldots,y_u)$ and $x_i,y_i \in \R^n$ for all $i=1,\ldots,u$.
We consider $\R^n$ equipped with the Euclidean norm and then $\delta(x_i,y_i)= \|x_{i}-y_{i}\|$.

\begin{Definition}
\label{eta_g}
 A process $X=(X_t)_{t\in \R}$ with values in $\R^n$ is called an $\eta$-weakly dependent process if there exists a sequence $(\eta(r))_{r \in \R^{+}}$ converging to $0$,
 satisfying
 {\small
 \begin{equation}
 \label{def}
 |Cov(F(X_{i_1},\ldots,X_{i_u}),G(X_{j_1},\ldots,X_{j_v}))|\leq c \, (u Lip(F) \|G\|_{\infty} +v Lip(G) \|F\|_{\infty}) \eta(r)
 \end{equation}}
for all
\begin{equation*}
\left\{
  \begin{array}{l}
(u,v) \in \N^* \times \N^*;\\
r\in \R^{+}; \\
(i_1,\ldots,i_u) \in \R^u \,\, \textrm{and}\,\, (j_1,\ldots,j_v) \in \R^v, \\ \textrm{with}\,\, i_1\leq\ldots\leq i_u \leq i_u+r\leq j_1\leq \ldots\leq j_v;\\
\textrm{functions} \,\, F \colon (\R^{n})^u \to \R \,\, \textrm{and}\,\, G\colon (\R^{n})^v \to \R \,\,\textrm{belonging to $\mathcal{F}$ and satisfying}\\
\|G\|_{\infty}\leq 1, \, \|F\|_{\infty} \leq 1\,\, \mathrm{and} \,\,  Lip(F) +Lip(G) < \infty, \\
\mathrm{where} \,\,  Lip(F)=\sup_{x\neq y} \frac{|F(x)-F(y)|}{\| x_1-y_1 \|+\|x_2-y_2\|+ \ldots+ \|x_n-y_n\|},
\end{array}
\right.
\end{equation*}
and where $c$ is a constant independent of $r$.
We call $(\eta(r))_{r \in \R^{+}}$ the sequence of the $\eta$-coefficients.  
\end{Definition}

The above definition makes the asymptotic independence between {\it past} and {\it future} explicit, this means that the {\it past} is progressively forgotten. In terms of the initial process $X$, {\it past}  and {\it future} are elementary events respectively defined through finite-dimensional marginals
as $A_u= (X_{i_1},\ldots,X_{i_u})$ and $B_v=(X_{j_1},\ldots,X_{j_v})$ for $i_1\leq\ldots\leq i_u \leq i_u+r\leq j_1\leq \ldots\leq j_v$ and $r \geq 0 $.

The weak dependence property, as stated in Definition \ref{eta_g}, depends upon the class of functions $\mathcal{H}=\{f \in \mathcal{F}: \|f\|_{\infty}\leq 1\}$. However, it can also be defined in $\mathcal{F}$ as discussed in \cite{DL99}. 
Note, a similar definition can be given for the strong mixing property introduced by Rosenblatt \cite{R56}.
Let $\sigma(A_u)$ and $\sigma(B_v)$ be the $\sigma$-algebras generated by the finite-dimensional marginals $A_u$ and $B_v$ and $\mathcal{F}^*=\bigcup_{u \in \mathbb{N}} \mathcal{F}^*_u$ where $\mathcal{F}^*_u$ is the class of bounded functions from $(\R^n)^u$ to $\R$. We define
\begin{equation}
\label{mixing}
\alpha(\sigma(A_u),\sigma(B_v))= \sup_{f,g \in \mathcal{H}^*} |Cov(f(A_u),g(B_v))|,
\end{equation}
where $\mathcal{H}^*=\{f \in \mathcal{F}^*:\|f\|_{\infty}\leq 1 \}$,
and then $\alpha$-strong mixing coefficient is
\[
\alpha(r)=\sup_{u,v} \max_{\begin{array}{l}
i_1\leq\ldots\leq i_u\\
j_i\leq\ldots\leq j_v\\
r=j_1-i_u 
\end{array}} \alpha(\sigma(A_u),\sigma(B_v)).
\]

We notice that definition (\ref{def}) holds for a set of functions in $\mathcal{H}$ whereas (\ref{mixing}) holds for functions belonging to $\mathcal{H}^*$.
This means that if a process $X$ is strongly mixing then it is also weakly dependent but the reverse implication does not necessarily hold. 
The only known case of the equivalence of the two definitions can be found in \cite[Proposition 1]{DF12} where it is shown that an $\eta$-weakly dependent integer valued process satisfies the strong mixing condition.

We now show that a non-causal MMA process is an $\eta$-weakly dependent process.

\begin{Proposition}
\label{tre}
Let $\Lambda$ be an $\R^d$-valued L\'evy basis with characteristic quadruple $(\gamma,\Sigma,\nu,\pi)$ such that $\E[L_1]=0$ and $\int_{\|x\| >1} \|x\|^2 \nu(dx) < \infty$, $f: S \times \R \to M_{n \times d}(\R)$ a $\mathcal{B}(S \times \R)$-measurable function and $f \in L^2(S \times \R, \pi \otimes \lambda)$. Then, the resulting MMA process $X$ is an $\eta$-weakly dependent process with coefficients
\begin{equation}
\label{coefficients}
\eta_X(r)= \Big( \int_S \int_{\R} tr(f(A,-s)\Sigma_L f(A,-s)^{\prime}) \mathbb{1}_{(-\infty,-r)}(2s) \, ds \, \pi(dA) \Big)^{\frac{1}{2}}
\end{equation}
\[
+ \Big( \int_S \int_{\R} tr(f(A,-s)\Sigma_L f(A,-s)^{\prime}) \mathbb{1}_{(r,+\infty)}(2s) \, ds \, \pi(dA) \Big)^{\frac{1}{2}},
\]
for all $r \geq 0$, where $\E[L_1L_1^{\prime}]=\Sigma_L=\Sigma+\int_{\R^d} x x^{\prime} \nu(dx)$.
\end{Proposition}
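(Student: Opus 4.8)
The plan is to exploit the independent scattering of $\Lambda$ by cutting the time axis at the midpoint between the two blocks. Fix $r\ge 0$ and indices $i_1\le\cdots\le i_u\le i_u+r\le j_1\le\cdots\le j_v$, and set $c:=i_u+r/2=j_1-r/2$. For every index $t$ I would split $X_t=X_t^{-}+X_t^{+}$ into a near and a far part,
\[
X_t^{-}:=\int_S\int_{\R} f(A,t-s)\,\mathbb{1}_{(-\infty,c]}(s)\,\Lambda(dA,ds),\qquad X_t^{+}:=\int_S\int_{\R} f(A,t-s)\,\mathbb{1}_{(c,\infty)}(s)\,\Lambda(dA,ds).
\]
Writing $A_u:=(X_{i_1},\ldots,X_{i_u})$, $B_v:=(X_{j_1},\ldots,X_{j_v})$ and the truncated vectors $\tilde A_u:=(X_{i_1}^{-},\ldots,X_{i_u}^{-})$, $\tilde B_v:=(X_{j_1}^{+},\ldots,X_{j_v}^{+})$, the vector $\tilde A_u$ is a function of $\Lambda$ restricted to the strip $S\times(-\infty,c]$ while $\tilde B_v$ is a function of $\Lambda$ restricted to $S\times(c,\infty)$. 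These strips are disjoint, so Definition \ref{basis} makes $\tilde A_u$ and $\tilde B_v$ independent, giving $Cov(F(\tilde A_u),G(\tilde B_v))=0$ for all $F,G$.

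With this vanishing cross term, the covariance I must bound decomposes as
\[
Cov(F(A_u),G(B_v))=Cov\big(F(A_u)-F(\tilde A_u),\,G(B_v)\big)+Cov\big(F(\tilde A_u),\,G(B_v)-G(\tilde B_v)\big).
\]
I would estimate each term by the elementary inequality $|Cov(W,Z)|\le 2\|Z\|_\infty\,\E|W|$, combined with $\|F\|_\infty,\|G\|_\infty\le 1$ and the Lipschitz bounds $|F(A_u)-F(\tilde A_u)|\le Lip(F)\sum_{k=1}^u\|X_{i_k}^{+}\|$ and $|G(B_v)-G(\tilde B_v)|\le Lip(G)\sum_{l=1}^v\|X_{j_l}^{-}\|$. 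This yields
\[
|Cov(F(A_u),G(B_v))|\le 2\,Lip(F)\|G\|_\infty\sum_{k=1}^u\E\|X_{i_k}^{+}\|+2\,Lip(G)\|F\|_\infty\sum_{l=1}^v\E\|X_{j_l}^{-}\|.
\]

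The remaining task is to control the single error terms by the two summands of $\eta_X(r)$. Because $\E[L_1]=0$, $f\in L^2(S\times\R,\pi\otimes\lambda)$ and $\int_{\|x\|>1}\|x\|^2\nu(dx)<\infty$, each truncated integral is a centred, square-integrable random variable, and the second-moment formula read off from Theorem \ref{uno} (so that $\E\|\int g\,d\Lambda\|^2=\int_S\int_{\R} tr(g\Sigma_L g^{\prime})\,ds\,\pi(dA)$ with $\Sigma_L=\Sigma+\int_{\R^d}xx^{\prime}\nu(dx)$) together with Jensen's inequality gives
\[
\E\|X_{i_k}^{+}\|\le\Big(\int_S\int_{(c,\infty)} tr\big(f(A,i_k-s)\Sigma_L f(A,i_k-s)^{\prime}\big)\,ds\,\pi(dA)\Big)^{1/2}.
\]
Substituting $v=i_k-s$ and using $i_k\le i_u$ turns the region $s>c$ into $v<(i_k-i_u)-r/2\le -r/2$, so the right-hand side is at most the second summand of $\eta_X(r)$; symmetrically, substituting $v=j_l-s$ and using $j_l\ge j_1$ turns $s\le c$ into $v\ge r/2$, so $\E\|X_{j_l}^{-}\|$ is at most the first summand of $\eta_X(r)$. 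Since each summand is nonnegative and hence at most $\eta_X(r)$, I arrive at
\[
|Cov(F(A_u),G(B_v))|\le 2\big(u\,Lip(F)\|G\|_\infty+v\,Lip(G)\|F\|_\infty\big)\eta_X(r),
\]
which is Definition \ref{eta_g} with $c=2$; moreover $\eta_X(r)\to0$ as $r\to\infty$ by dominated convergence since $f\in L^2(\pi\otimes\lambda)$ and $\Sigma_L$ is finite.

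The step I expect to be the main obstacle is the rigorous justification of the centring and of the $L^2$-isometry for the truncated pieces: I would need to check that $f(A,i_k-\cdot)\mathbb{1}_{(c,\infty)}$ still satisfies the integrability conditions (\ref{ass1})--(\ref{ass3}), so that $X_{i_k}^{+}$ is a genuine mean-zero element of $L^2$ whose second moment is exactly the stated trace integral. Here the hypotheses $\E[L_1]=0$ and $\int_{\|x\|>1}\|x\|^2\nu(dx)<\infty$ are precisely what removes any drift contribution and guarantees a finite, purely quadratic second-moment structure; once this is in place, the change of variables matching the truncation regions to the indicator sets $\mathbb{1}_{(-\infty,-r)}(2s)$ and $\mathbb{1}_{(r,\infty)}(2s)$ of (\ref{coefficients}) is routine.
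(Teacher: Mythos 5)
Your proof is correct and follows essentially the same route as the paper's: the paper truncates each $X_t$ to the symmetric window $S\times[t-r/2,\,t+r/2]$ and exploits independence of the truncated blocks, while you cut once at $c=i_u+r/2$ and keep the appropriate half-line for each block, but the independence argument from disjoint supports, the Lipschitz/$L^2$ (It\^o-isometry) estimates, and the resulting coefficient $\eta_X(r)$ are the same. The only cosmetic slip is the claim $c=i_u+r/2=j_1-r/2$, which holds as an equality only when $j_1=i_u+r$; in general one only has $j_1-c\ge r/2$, and that inequality is exactly what your change of variables uses, so nothing breaks.
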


\begin{proof}
First, we define $\forall t \in \R$ and $m \geq 0$ the truncated sequence\begin{equation}
\label{truncated}
X_t^{(m)}=\int_S  \int_{\R} f(A,t-s) \mathbb{1}_{[-m,m]}(t-s) \, \Lambda(dA,ds)= \int_S \int_{t-m}^{t+m} f(A,t-s) \,\, \Lambda(dA,ds).
\end{equation}
Since the kernel function $f$ is square integrable, we have that properties (\ref{ass2}) and (\ref{ass3}) hold and so $f$ is $\Lambda$-integrable (Theorem \ref{uno}) and $X$ is well defined. Moreover, Proposition \ref{moment1} holds, $\E[X_t^2] < \infty$ for all $t \in \R$ and we can determine an upper bound of the expectation 
\[
\E\|X_t-X_t^{(m)}\|= \E \Big \| \int_S \int_{-\infty}^{t-m} f(A,t-s) \, \Lambda(dA,ds) + \int_S \int_{t+m}^{+\infty} f(A,t-s) \Lambda(dA,ds) \Big \| 
\]
\[
\leq \Big(\E \Big \| \int_S \int_{-\infty}^{t-m} f(A,t-s) \, \Lambda(dA,ds) \Big \|^2 \Big)^{\frac{1}{2}} 
+ \Big(\E \Big \| \int_S \int_{t+m}^{\infty} f(A,t-s) \, \Lambda(dA,ds)\Big \|^2 \Big)^{\frac{1}{2}}.
\]
Due to the stationarity of $X$ the above estimation is independent of $t$ and equal to
\begin{align}
\label{res}
\begin{split}
\Big( \int_S\int_{-\infty}^{-m} tr(f(A,-s)\Sigma_L f(A,-s)^{\prime}) \, ds \pi(dA) \Big)^{\frac{1}{2}}\\
+\Big( \int_S\int_{m}^{\infty} tr(f(A,-s)\Sigma_L f(A,-s)^{\prime}) \, ds \pi(dA) \Big)^{\frac{1}{2}}
\end{split}
\end{align}

Let $F$ and $G$ belong to the class of bounded functions $\mathcal{H}$, $(u,v) \in \mathbb{N}^* \times \mathbb{N}^*$, $r \in \R^{+}$, $(i_1,\ldots,i_u) \in \R^u$ and $(j_1,\ldots,j_v) \in \R^v$ with $i_1\leq\ldots\leq i_u \leq  i_u+r\leq j_1\leq \ldots\leq j_v$, $X_i^*=(X_{i_1},\ldots,X_{i_u})$,
$X_j^*=(X_{j_1},\ldots,X_{j_v})$, and $X_i^{*(m)}=(X_{i_1}^{(m)},\ldots,X_{i_u}^{(m)})$, $X_j^{*(m)}=(X_{j_1}^{(m)},\ldots,X_{j_v}^{(m)})$ where for all $m \geq 0$
\begin{equation*}
X_{i_u}^{(m)}=\int_S\int_{i_u-m}^{i_u+m} f(A,i_u-s) \, \Lambda(dA,ds) 
\end{equation*}
\begin{equation}
\label{ind1}
\textrm{and} \hspace{9.6cm}
\end{equation}
\begin{equation*}  
X_{j_1}^{(m)}=\int_S\int_{j_1-m}^{j_1+m} f(A,j_1-s) \, \Lambda(dA,ds).
\end{equation*}
Then, if $j_1-m-i_u-m \geq 0$, that can also be expressed as $j_1-i_u \geq 2m$, $I_u=S \times [i_u-m,i_u+m]$ and $J_1=S \times [j_1-m,j_1+m]$ are disjoint sets or they have intersection $S\times\{j_1-m\}$ when $j_1-i_u=2m$. Noting that $\pi \times \lambda (S\times \{j_1-m\})=0$ by the definition of a L\'evy basis, the two sequences $(X_i^{(m)})_{i_1,\ldots,i_u}$ and $(X_j^{(m)})_{j_1,\ldots,j_v}$ are independent and so are $F(X_i^{*(m)})$ and $G(X_j^{*(m)})$.
Therefore,
\[
|Cov(F(X_i^*),G(X_j^*))|
\]
\[
\leq |Cov(F(X_i^*)-F(X_i^{*(m)}),G(X_j^*))|+|Cov(F(X_i^{*(m)}), G(X_j^{*})-G(X_j^{*(m)}))|
\]
\[
\leq 2(\E |F(X_i^*)-F(X_i^{*(m)})|+ \E |G(X_j^*)-G(X_j^{*(m)})| )
\]
the last relation comes from $\|F\|_{\infty}, \|G\|_{\infty}\leq 1$
\[
\leq 2 \Big(Lip(F) \sum_{l=1}^u \E\|X_{i_l}-X_{i_l}^{(m)}\|+ Lip(G) \sum_{k=1}^v \E\|X_{j_k}-X_{j_k}^{(m)}\| \Big)
\]
using the result (\ref{res}) for $m=\frac{r}{2}$
\[
\leq 2 (u Lip(F)+ v Lip(G)) \Big \{ \Big( \int_S\int_{\R} tr(f(A,-s)\Sigma_L f(A,-s)^{\prime}) \, \mathbb{1}_{(-\infty, -r) }(2s) \, ds \pi(dA) \Big)^{\frac{1}{2}}
\]
\[
+ \Big( \int_S\int_{\R} tr(f(A,-s)\Sigma_L f(A,-s)^{\prime}) \, \mathbb{1}_{(r, +\infty) }(2s) \, ds \pi(dA) \Big)^{\frac{1}{2}} \Big\}
\]
\[
=2(u Lip(F) + v Lip(G)) \eta_X(r),
\]
which converges to zero as $r$ goes to infinity by applying the dominated convergence theorem.
\end{proof}

The following Corollary establishes the $\eta$ weak dependence of an MMA when its underlying L\'evy process has finite mean possibly different from zero.
\begin{Corollary}
\label{tre_non}
Let $\Lambda$ be an $\R^d$-valued L\'evy basis with characteristic quadruple $(\gamma,\Sigma,\nu,\pi)$ and $\int_{\|x\| >1} \|x\|^2 \nu(dx) < \infty$, $f: S \times \R \to M_{n \times d}(\R)$ a $\mathcal{B}(S \times \R)$-measurable function satisfying assumption (\ref{ass1}) and $f \in L^2(S \times \R, \pi \otimes \lambda)$. Then, the resulting MMA process $X$ is an $\eta$-weakly dependent process with coefficients

\begin{align}
\label{eta_mean}
\begin{split}
\eta_{X}(r)=& \Big( \int_S \int_{\R} tr(f(A,-s)\Sigma_L f(A,-s)^{\prime}) \mathbb{1}_{(-\infty,-r)}(2s) \, ds \, \pi(dA)\\
+& \Big \|\int_S \int_{\R} f(A,-s) \mu \, \mathbb{1}_{(-\infty,-r)}(2s) \, ds \, \pi(dA) \Big \|^2 \Big)^{\frac{1}{2}}\\
+&\Big( \int_S \int_{\R} tr(f(A,-s)\Sigma_L f(A,-s)^{\prime}) \mathbb{1}_{(r,+\infty)}(2s) \, ds \, \pi(dA)\\
+& \Big \|\int_S \int_{\R} f(A,-s) \mu \, \mathbb{1}_{(r,+\infty)}(2s) \, ds \, \pi(dA) \Big \|^2 \Big)^{\frac{1}{2}}
\end{split}
\end{align}
for all $r \geq 0$, where $E[L_1]= \mu= \gamma+ \int_{\|x\|> 1} x \,\nu(dx)$ and $\E[L_1L_1^{\prime}]=\Sigma_L=\Sigma+\int_{\R^d} x x^{\prime} \,\nu(dx)$.
\end{Corollary}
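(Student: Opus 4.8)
The plan is to follow the proof of Proposition \ref{tre} almost verbatim, since there the $\eta$-weak dependence is reduced to controlling the tail error $\E\|X_t-X_t^{(m)}\|$ of the truncated process $X_t^{(m)}$ defined in (\ref{truncated}). The combinatorial part of that argument -- splitting the covariance via the triangle inequality, using $\|F\|_\infty,\|G\|_\infty\le 1$ together with the Lipschitz bounds, and exploiting that $X_i^{*(m)}$ and $X_j^{*(m)}$ become independent once $m=r/2$ because the index sets $I_u$ and $J_1$ are disjoint up to a $\pi\times\lambda$-null set -- carries over unchanged and reproduces the factor $2(u\,Lip(F)+v\,Lip(G))$. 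The only step that must be redone is the bound on $\E\|X_t-X_t^{(m)}\|$, which in Proposition \ref{tre} relied on $\E[L_1]=0$ and hence reduced to the variance alone.

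First I would record the standing facts. Under assumption (\ref{ass1}) together with $f\in L^2(S\times\R,\pi\otimes\lambda)$ and $\int_{\|x\|>1}\|x\|^2\nu(dx)<\infty$, the process $X$ is well defined by Theorem \ref{uno}, has finite second moment by Proposition \ref{moment1}(i) with $r=2$, and has finite mean $\E[X_t]=\int_S\int_\R f(A,t-s)\,\mu\,ds\,\pi(dA)$ with $\mu=\gamma+\int_{\|x\|>1}x\,\nu(dx)=\E[L_1]$, since $\E[\Lambda(B)]=\mu\,(\pi\times\lambda)(B)$. Writing $Y$ for either tail integral appearing in $X_t-X_t^{(m)}$, the substitute for the variance estimate is the elementary decomposition
\[
\E\|Y\|\le(\E\|Y\|^2)^{1/2}=\big(\|\E[Y]\|^2+\mathrm{tr}\,\mathrm{Cov}(Y)\big)^{1/2}.
\]
Here $\mathrm{tr}\,\mathrm{Cov}(Y)$ is computed exactly as in Proposition \ref{tre} from $\Sigma_{int}$ and the covariance contribution of $\nu_{int}$ in Theorem \ref{uno}, giving the $\mathrm{tr}(f\Sigma_L f')$ integrals, while $\|\E[Y]\|^2$ produces the new squared-mean integrals. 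Applying the triangle inequality to the two tails, using stationarity to set $t=0$, and putting $m=r/2$ (so that $s<-m$ becomes $2s<-r$ and $s>m$ becomes $2s>r$) then yields precisely the two bracketed terms of $\eta_X(r)$ in (\ref{eta_mean}).

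The step I expect to require the most care is showing that the squared-mean integrals are finite and vanish as $r\to\infty$, i.e. that $f\mu\in L^1(S\times\R,\pi\otimes\lambda)$; this is exactly what assumption (\ref{ass1}) guarantees once combined with the moment hypotheses. Writing $g(A,s)$ for the integrand appearing in (\ref{ass1}), a direct manipulation (the difference of the two indicator cut-offs vanishes near $x=0$, so all pieces are absolutely convergent) gives $f(A,s)\mu=g(A,s)+\int_{\|f(A,s)x\|>1}f(A,s)x\,\nu(dx)$, whence
\[
\int_S\int_\R\|f(A,s)\mu\|\,ds\,\pi(dA)\le\int_S\int_\R\|g(A,s)\|\,ds\,\pi(dA)
\]
\[
+\int_S\int_\R\int_{\|f(A,s)x\|>1}\|f(A,s)x\|\,\nu(dx)\,ds\,\pi(dA).
\]
The first term is finite by (\ref{ass1}); the second is finite because on $\{\|f(A,s)x\|>1\}$ one has $\|f(A,s)x\|\le\|f(A,s)x\|^2$, and the resulting integral is exactly $\int_{\|y\|>1}\|y\|^2\nu_{int}(dy)$, already shown to be finite in the proof of Proposition \ref{moment1}(i). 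Absolute integrability of $f\mu$ makes the tail mean terms vanish as $r\to\infty$, and together with $f\in L^2$ for the variance terms, dominated convergence gives $\eta_X(r)\to 0$, completing the proof.
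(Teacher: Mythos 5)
Your proposal is correct and follows essentially the same route as the paper, which likewise reuses the truncation $X_t^{(m)}$ from Proposition \ref{tre} and simply replaces the tail bound (\ref{res}) by $(\|\E[Y]\|^2+\mathrm{tr}\,\mathrm{Cov}(Y))^{1/2}$ before repeating the covariance-splitting argument with $m=r/2$. Your additional verification that $f\mu\in L^1(S\times\R,\pi\otimes\lambda)$ via assumption (\ref{ass1}) is a detail the paper leaves implicit, and it is carried out correctly.
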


\begin{proof}
Let $X_t^{(m)}$ defined as in Proposition \ref{tre} for all $t \in \R$ and $m \geq 0$. Then, (\ref{res}) becomes
\begin{align*}
\begin{split}
&\Big( \int_S \int_{\R} tr(f(A,-s)\Sigma_L f(A,-s)^{\prime}) \mathbb{1}_{(-\infty,-m)}(s) \, ds \, \pi(dA)\\&+ \Big \|\int_S \int_{\R} f(A,-s) \mu \mathbb{1}_{(-\infty,-m)}(s) \, ds \, \pi(dA) \Big \|^2 \Big)^{\frac{1}{2}}\\
&+\Big( \int_S \int_{\R} tr(f(A,-s)\Sigma_L f(A,-s)^{\prime}) \mathbb{1}_{(m,+\infty)}(s) \, ds \, \pi(dA)\\ 
&+\Big \|\int_S \int_{\R} f(A,-s) \mu \mathbb{1}_{(m,+\infty)}(s) \, ds \, \pi(dA) \Big \|^2 \Big)^{\frac{1}{2}}
\end{split}
\end{align*}
Proceeding as in the proof of Proposition \ref{tre}, the $\eta$-coefficients (\ref{eta_mean}) are obtained.
\end{proof}

When the underlying L\'evy process is of finite variation we can lighten the moment assumptions on the MMA process.
The below result applies to all finite variation MMA process with finite mean.

\begin{Corollary}
\label{quattro}
Let $\Lambda$ be an $\R^d$-valued L\'evy basis with characteristic quadruple $(\gamma,0,\nu,\pi)$ such that $\int_{\R^d} \|x\| \nu(dx) < \infty$, $f: S \times \R \to M_{n \times d}(\R)$ a $\mathcal{B}(S \times \R)$-measurable function and $f \in L^1(S \times \R, \pi \otimes \lambda)$ and define $\gamma_0$ as in (\ref{yei}).
Then, the resulting MMA process $X$ is an $\eta$-weakly dependent process with coefficients
\begin{equation}
\label{coefficients2}
\eta_X(r)=  \int_S \int_{\R} \int_{\R^d} \| f(A,-s)x\| \, \mathbb{1}_{(-\infty,-r)\bigcup (r,+\infty)}(2s) \,\nu(dx)\, ds \, \pi(dA)
\end{equation}
\[
+\int_S \int_{\R} \| f(A,-s) \gamma_0 \| \, \mathbb{1}_{(-\infty,-r)\bigcup (r,+\infty)}(2s) \, ds \, \pi(dA),
\]
for all $r \geq 0$.
\end{Corollary}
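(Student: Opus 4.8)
The plan is to mirror the proof of Proposition \ref{tre}, replacing the $L^2$-isometry estimates by the pathwise $L^1$ bounds that are available for finite variation L\'evy bases through Corollary \ref{due}. First I would introduce, for $t \in \R$ and $m \geq 0$, the truncated process $X_t^{(m)} = \int_S \int_{t-m}^{t+m} f(A,t-s) \, \Lambda(dA,ds)$. Since $\int_{\R^d} \|x\| \nu(dx) < \infty$ and $f \in L^1(S \times \R, \pi \otimes \lambda)$, the assumptions (\ref{ass4}) and (\ref{ass5}) hold for $f$ and for its truncations, so each of these kernels is $\Lambda$-integrable and admits the pathwise Lebesgue-integral representation (\ref{yei2}) from Corollary \ref{due}. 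This replaces the appeal to Theorem \ref{uno} and square integrability made in Proposition \ref{tre}.

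The decisive step is the $L^1$ estimate of the truncation error. Writing $X_t - X_t^{(m)}$ as the integral over $\{\,|t-s| > m\,\}$ and applying the triangle inequality to the representation (\ref{yei2}), I would bound $\E\|X_t - X_t^{(m)}\|$ by the sum of the drift contribution $\int_S \int_{|t-s|>m} \|f(A,t-s)\gamma_0\| \, ds \, \pi(dA)$ and the expectation of the Poisson integral $\int_{\R^d} \int_S \int_{|t-s|>m} \|f(A,t-s)x\| \, \mu(dx,dA,ds)$. The latter expectation is evaluated by Campbell's theorem, using that the intensity of $\mu$ is $\nu \times \pi \times \lambda$, which yields $\int_{\R^d} \int_S \int_{|t-s|>m} \|f(A,t-s)x\| \, \nu(dx) \, ds \, \pi(dA)$. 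A change of variables $s \mapsto t-s$ shows that this bound is independent of $t$, as required by stationarity. Note that here no Cauchy--Schwarz inequality is needed, which is exactly why the resulting coefficients (\ref{coefficients2}) appear as plain $L^1$ integrals rather than as square roots of trace expressions.

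The independence argument is then identical to the non-causal $L^2$ case. For $j_1 - i_u \geq 2m$ the index sets $S \times [i_u - m, i_u + m]$ and $S \times [j_1 - m, j_1 + m]$ are disjoint up to the $\pi \times \lambda$-null set $S \times \{j_1 - m\}$, so the truncated blocks $X_i^{*(m)}$ and $X_j^{*(m)}$ are independent and $Cov(F(X_i^{*(m)}), G(X_j^{*(m)})) = 0$. Splitting $Cov(F(X_i^*), G(X_j^*))$ accordingly, using $\|F\|_{\infty}, \|G\|_{\infty} \leq 1$ and the Lipschitz property gives, up to the universal constant, the estimate in terms of $Lip(F) \sum_l \E\|X_{i_l} - X_{i_l}^{(m)}\| + Lip(G) \sum_k \E\|X_{j_k} - X_{j_k}^{(m)}\|$. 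Inserting the $t$-independent $L^1$ tail bound and setting $m = r/2$, so that $|t-s| > m$ becomes $\mathbb{1}_{(-\infty,-r)\cup(r,+\infty)}(2s)$, produces precisely the coefficients (\ref{coefficients2}).

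Finally I would verify that $\eta_X(r) \to 0$ as $r \to \infty$ by dominated convergence, with (\ref{ass4}) and (\ref{ass5}) furnishing the integrable dominating functions. The one point that must be handled with care is the application of Campbell's formula to bring the expectation inside the Poisson integral: this is where the finite variation hypothesis $\int_{\R^d} \|x\| \nu(dx) < \infty$ is essential, since it guarantees the absolute integrability needed for the mean of the jump part to be finite and for the Fubini interchange to be legitimate. Every other step is a routine $L^1$ analogue of the corresponding step in Proposition \ref{tre}.
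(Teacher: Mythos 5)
Your proposal is correct and follows essentially the same route as the paper: truncate at $m=r/2$, bound $\E\|X_t-X_t^{(m)}\|$ by the $L^1$ tail integrals coming from the finite-variation representation (\ref{yei2}), exploit independence of the truncated blocks, and finish with dominated convergence. The only difference is that you spell out the Campbell/compensation step behind the $L^1$ bound, which the paper's proof states without derivation.
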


\begin{proof}
As the kernel function $f$ is in $L^1$, the properties (\ref{ass4}) and (\ref{ass5}) are satisfied. Thus, $f$ is $\Lambda$-integrable and $X$ well defined. 
Moreover, because of Corollary \ref{moment2}, $\E[X_t]< \infty$.
Using the notation of Proposition \ref{tre}, for all $t \in \R$ and $m \geq 0$
\[
\E \|X_t-X_t^{(m)}\|  \leq \int_S \int_{\R} \int_{\R^d} \|f(A,-s)x\| \mathbb{1}_{(-\infty,-m)\bigcup (m,+\infty)}(s) \,\nu(dx)\, ds \, \pi(dA),
\]
\[
+ \int_S \int_{\R} \| f(A,-s) \gamma_0 \| \, \mathbb{1}_{(-\infty,-m)\bigcup (m,+\infty)}(s) \, ds \, \pi(dA),
\]
where $X_t^{(m)}$ is the truncated sequence (\ref{truncated}).
Thus, for $m=\frac{r}{2}$ and $F$, $G$, $X_i^*$ and $X_j^*$ 
\[
|Cov(F(X_i^*),G(X_j^*))| 
\]
\[
\leq 2 (u Lip(F)+ v Lip(G)) \int_S \int_{\R} \int_{\R^d} \| f(A,-s)x\| \, \mathbb{1}_{(-\infty,-r)\bigcup (r,+\infty)}(2s) \,\nu(dx)\, ds \, \pi(dA) 
\]
\[
+ \int_S \int_{\R} \| f(A,-s) \gamma_0 \| \, \mathbb{1}_{(-\infty,-r)\bigcup (r,+\infty)}(2s) \, ds \, \pi(dA)
\]
\[
=2 (u Lip(F)+ v Lip(G)) \eta_X(r).
\]
Finally, we conclude by applying the dominated convergence theorem.
\end{proof}

The $\eta$ coefficients have some hereditary properties. For example, let $h:\R^n \to \R$ be a Lipschitz function, then
if the sequence $(X_t)_{t \in \R}$ is $\eta$-weakly dependent, the same is true for the sequence $(h(X_t))_{t \in \R}$. 
The latter can be readily checked directly based on Definition \ref{eta_g}.
Hereditary properties for functions that are not Lipschitz on the whole space $\R^n$ can be found in \cite{BDL2007} Lemma $6$, for stationary processes. 
Below follows a multivariate extension of this Lemma for $h:\R^n \to \R^m$. 
\begin{Proposition}
\label{her2}
Let $(X_t)_{t \in \R}$ be an $\R^n$-valued stationary process and assume there exists some constant $C>0$ such that $ \E[\|X_{0}\|^p]^{\frac{1}{p}} \leq C,$ with $p>1$, $h \colon \R^n \to \R^m$ be a function such that $h(0)=0$, $h(x)=(h_1(x),\ldots,h_m(x))$ and
\begin{equation}
\label{hyp_her2}
\|h(x)-h(y)\| \leq c \,\|x-y\| (1+\|x\|^{a-1}+\|y\|^{a-1}),
\end{equation}
for $x,y \in \R^n$, $c>0$ and $1\leq a < p$. 
Define $(Y_t)_{t \in \R}$ by $Y_t=h(X_t)$. If  $(X_t)_{t \in \R}$ is an $\eta$-weakly dependent process,
then $(Y_t)_{t \in \R}$ is an $\eta$-weakly dependent process such that
\[
\forall \, r \geq 0, \,\,\, \eta_Y(r)= \mathcal{C} \, \eta_X(r)^{\frac{p-a}{p-1}},
\]
with the constant $\mathcal{C}$ independent of $r$. 
\end{Proposition}

\begin{proof}
	
	For $(u,v) \in \N^*\times \N^*$, $(i_1,\ldots,i_u) \in \R^u$ and $(j_1,\ldots,j_v) \in \R^v$ with $i_1\leq\ldots\leq i_u \leq i_u+r\leq j_1\leq \ldots\leq j_v$, let us call
	\[
	Y_i^*=(h(X_{i_1}),\ldots,h(X_{i_u})),\,\,Y_{j}^*=(h(X_{j_1}),\ldots,h(X_{j_v})).
	\]
	
	Let $F:\R^{mu}\to \R$, $G:\R^{mv} \to \R$ bounded, such that $\|F\|_{\infty},\|G\|_{\infty} \leq 1$, and Lipschitz functions with respect to the distance (\ref{dist}), then 
	\[
	F(Y_i^*)=F(h(X_{i_1}),\ldots,h(X_{i_u})),\,\,G(Y_j^*)=F(h(X_{j_1}),\ldots,h(X_{j_v})),
	\]
	and
	\[
	F^{(M)}(Y_i^*)=F(h(X_{i_1}^{(M)}),\ldots,h(X_{i_u}^{(M)})),\,\,G^{(M)}(Y_j^*)=F(h(X_{j_1}^{(M)}),\ldots,h(X_{j_v}^{(M)})),
	\]
	where $X_{i}^{(M)}=X_i \mathbb{1}_{\|X_i\|\leq M}$ and w.l.o.g $M>1$.
	According to Definition \ref{eta_g}, we start by analyzing 
	\begin{align}
		\label{bound}
		\begin{split}
			&|Cov(F(Y_i^*),G(Y_j^*))| \leq |Cov(F(Y_i^*)-F^{(M)}(Y_i^*),G(Y_j^*))|\\
			&+|Cov(F^{(M)}(Y_i^*),G(Y_j^*)-G^{(M)}(Y_j^*))|+|Cov(F^{(M)}(Y_i^*), G^{(M)}(Y_j^*))|.
		\end{split}
	\end{align}
	We have that
	\begin{equation}
		\label{est1}
		|Cov(F(Y_i^*)-F^{(M)}(Y_i^*),G(Y_j^*))|\leq 2 \|G\|_{\infty} \E|F(Y_i^*)-F^{(M)}(Y_i^*)|
	\end{equation}
	\[
	\leq 2 Lip(F) \sum_{l=1}^u \E \|h(X_{i_l})-h(X_{i_l}^{(M)})\|.
	\]
	By assumption, for each $l=1,\ldots,u$
	\begin{equation}
		\label{conto1}
		\E (\|h(X_{i_l})-h(X_{i_l}^{(M)})\|) \leq c \,\E (\|X_{i_l}-X_{i_l}^{(M)}\| ( 1+\|X_{i_l}\|^{a-1} + \|X_{i_l}^{(M)}\|^{a-1}))
	\end{equation}
	\[
	\leq c \, \E (\| X_{i_l} \| \mathbb{1}_{\|X_{i_l}\| >M})+ c \,\E (\|X_{i_l}\|^{a-1} \| X_{i_l} \| \mathbb{1}_{\|X_{i_l}\| >M})
	\]
	\[
	+ c \,\E (\| X_{i_l} \| \mathbb{1}_{\|X_{i_l}\| >M}) M^{a-1}
	\]
	\[
	\leq c \, \E \Big(\| X_{i_l} \| \frac{\| X_{i_l} \|^{p-1}}{M^{p-1}} \mathbb{1}_{\|X_{i_l}\| >M} \Big)+ 
	c \,\E \Big(\|X_{i_l}\|^{a-1} \frac{\| X_{i_l} \|^{p-a} }{M^{p-a}} \| X_{i_l} \| \mathbb{1}_{\|X_{i_l}\| >M}\Big)
	\]
	\[
	+ c \,\E \Big(\| X_{i_l} \| \frac{\| X_{i_l} \|^{p-1}}{M^{p-1}} \mathbb{1}_{\|X_{i_l}\| >M}\Big) M^{a-1}
	\]
	\[
	\leq c \, \E( \| X_{i_l} \|^p \mathbb{1}_{\|X_{i_l}\| >M}) M^{1-p} + 2 c \, \E( \| X_{i_l} \|^p \mathbb{1}_{\|X_{i_l}\| >M}) M^{a-p}
	\]
	\[
	\leq 3c\, \E (\|X_{i_l}\|^p)  M^{a-p}.
	\]
	
	Therefore (\ref{est1}) is less than or equal to $6 c \,u\,Lip(F) C^p M^{a-p}$. An analogous bound holds for $|Cov(F^{(M)}(Y_i^*),G(Y_j^*)-G^{(M)}(Y_j^*))|$.
	Moreover, $F^{(M)}$ is a Lipschitz function on the set $\mathcal{A}=\{x=(x_1,\ldots, x_u) \in  \R^{nu}: \|x_i\|\leq M \, \textrm{for}\, i=1,\ldots,u \}$. 
	Let $Z^{(M)}, W^{(M)} \in \mathcal{A}$, then 
	\begin{align*}
		&|F(h(Z^{(M)}_1),\ldots,h(Z^{(M)}_u))-F(h(W^{(M)}_1), \ldots, h(W^{(M)}_u))|\\
		&\leq Lip(F) \sum_{l=1}^u \|h(Z_{l}^{(M)})-h(W_{l}^{(M)})\| \\
		&\leq c \, Lip(F) \sum_{l=1}^u  \|Z_{l}^{(M)} -W_{l}^{(M)}\|(1+ \| Z_{l}^{(M)}\|^{a-1} \|W_l^{(M)}\|^{a-1})\\
		&\leq 3c \, Lip(F) M^{a-1} \sum_{l=1}^u  \|Z_{l}^{(M)} -W_{l}^{(M)}\|.
	\end{align*}
	The same argument holds also for the function $G^{(M)}$. 
	
	$X_{t}^{(M)}$ is a process with values in $\mathcal{A}$ and $\eta$-weakly dependent with the same coefficients as $X_t$, then 
	\[
	|Cov(F^{(M)}(Y_i^*), G^{(M)}(Y_j^*))|=
	\]
	\[
	\leq 3 c \,( u Lip(F)  +  v Lip(G) ) M^{a-1} \eta_X(r).
	\]
	To conclude, (\ref{bound}) is less than or equal to
	\[
	6 c \,(u Lip(F) + v Lip(G)) \Big( \frac{M^{a-1}}{2} \eta_X(r) + C^p M^{a-p} \Big).
	\]
	By choosing  $M= \eta_X(r)^{\frac{1}{1-p}}$ and calling $\mathcal{C}=6c (C^p+\frac{1}{2})$, we obtain that
	\[
	\eta_Y(r)= \mathcal{C} \, \eta_X(r)^{\frac{p-a}{p-1}}.
	\]

\end{proof}

For a polynomial function $h(x)$ we have 
\begin{Corollary}
\label{hereditary_lemma}
Let $(X_t)_{t \in \R}$ be an $\R^n$-valued stationary process and assume there exists some constant $C>0$ such that $ \E[\|X_{0}\|^p]^{\frac{1}{p}} \leq C,$ with $p>1$, $h \colon \R^n \to \R^m$ be a function such that $h(0)=0$ and $h(x)=(h_1(x),\ldots,h_m(x))$ with $h_s(\cdot)$ for $s=1,\ldots,m$ being a polynomial with degree at 
most $a$ for $1\leq a < p$. 
Define $(Y_t)_{t \in \R}$ by $Y_t=h(X_t)$ for $t \in \R$ an $\R^m$-valued process. If  $(X_t)_{t \in \R}$ is an $\eta$-weakly dependent process, then $(Y_t)_{t \in \R}$ is an $\eta$-weakly dependent process such that
\[
\forall \, r \geq 0, \,\,\, \eta_Y(r)= \mathcal{C} \, \eta_X(r)^{\frac{p-a}{p-1}},
\]
with the constant $\mathcal{C}$ independent of $r$. 
\end{Corollary}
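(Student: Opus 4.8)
The plan is to deduce Corollary \ref{hereditary_lemma} directly from Proposition \ref{her2}. The only difference between the two statements is that Proposition \ref{her2} assumes the growth bound (\ref{hyp_her2}), whereas here $h$ is only required to be a polynomial map of degree at most $a$ with $h(0)=0$. Hence the whole task reduces to checking that every such polynomial $h=(h_1,\dots,h_m)$ satisfies (\ref{hyp_her2}); once this is in place, the conclusion $\eta_Y(r)=\mathcal{C}\,\eta_X(r)^{(p-a)/(p-1)}$ follows verbatim from Proposition \ref{her2}.

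To verify (\ref{hyp_her2}) I would argue componentwise. Fix $s\in\{1,\dots,m\}$ and consider the scalar polynomial $h_s$ of degree at most $a$. Since $\R^n$ is convex, I would apply the fundamental theorem of calculus along the segment joining $y$ to $x$,
\[
h_s(x)-h_s(y)=\int_0^1 \langle \nabla h_s(y+t(x-y)),\, x-y\rangle\,dt,
\]
which gives $|h_s(x)-h_s(y)|\leq \|x-y\|\,\sup_{z\in[x,y]}\|\nabla h_s(z)\|$. Each entry of $\nabla h_s$ is a polynomial of degree at most $a-1$, so there is a constant $c_s>0$ with $\|\nabla h_s(z)\|\leq c_s(1+\|z\|^{a-1})$ for all $z\in\R^n$ (each monomial $z^\alpha$ with $|\alpha|\leq a-1$ obeys $|z^\alpha|\leq 1+\|z\|^{a-1}$).

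On the segment one has $\|z\|\leq \|x\|\vee\|y\|\leq \|x\|+\|y\|$, and the elementary inequality $(\|x\|+\|y\|)^{a-1}\leq 2^{a-1}(\|x\|^{a-1}+\|y\|^{a-1})$, valid since $a-1\geq 0$, yields
\[
\|\nabla h_s(z)\|\leq c_s\big(1+2^{a-1}(\|x\|^{a-1}+\|y\|^{a-1})\big)\leq \tilde c_s\big(1+\|x\|^{a-1}+\|y\|^{a-1}\big).
\]
Collecting these bounds through $\|h(x)-h(y)\|^2=\sum_{s=1}^m |h_s(x)-h_s(y)|^2$ and setting $c=\big(\sum_{s=1}^m \tilde c_s^{\,2}\big)^{1/2}$ produces exactly (\ref{hyp_her2}) for the vector-valued map $h$. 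The only slightly delicate point, which I would treat as the main obstacle, is the constant bookkeeping in passing from $(\|x\|+\|y\|)^{a-1}$ to $\|x\|^{a-1}+\|y\|^{a-1}$: one should note that for $a=1$ the map $h$ is linear and (\ref{hyp_her2}) is immediate, while for $1<a<p$ the factor $2^{a-1}$ comes from convexity of $t\mapsto t^{a-1}$ when $a-1\geq 1$ and from subadditivity when $0<a-1<1$. Everything else is a routine combination of the mean value estimate with the polynomial growth of the gradient, after which Proposition \ref{her2} delivers the claim.
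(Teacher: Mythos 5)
Your proposal is correct and follows exactly the paper's route: the paper's own proof consists of asserting that a degree-$a$ polynomial with $h(0)=0$ satisfies the growth bound (\ref{hyp_her2}) and then invoking Proposition \ref{her2}, and you simply supply the (correct) mean-value-theorem verification of that bound which the paper leaves implicit.
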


\begin{proof}
The function $h$ satisfies the assumption (\ref{hyp_her2}) for each polynomial degree $a$ less than $p$.
Proposition \ref{her2} can then be applied. 
\end{proof}

\subsection{Causal case}

\begin{Definition}
\label{theta_g}
 A process $X=(X_t)_{t\in \R}$ with values in $\R^n$ is called a $\theta$-weakly dependent process if there exists a sequence $(\theta(r))_{r \in \R^{+}}$ converging to $0$,
 satisfying
 \begin{equation}
 \label{def2}
 |Cov(F(X_{i_1},\ldots,X_{i_u}),G(X_{j_1},\ldots,X_{j_v}))|\leq c\, (v Lip(G) \|F\|_{\infty}) \theta(r)
 \end{equation}
for all
\begin{equation*}
\left\{ \begin{array}{l}
(u,v) \in \N^* \times \N^*,\\
r \in \R^{+};\\
(i_1,\ldots,i_u) \in \R^u \,\, \textrm{and}\,\, (j_1,\ldots,j_v) \in \R^v, \\ \textrm{with}\,\, i_1\leq\ldots\leq i_u \leq i_u+r\leq j_1\leq \ldots\leq j_v;\\
\textrm{functions} \,\, F \colon (\R^{n})^u \to \R \,\, \textrm{and}\,\, G\colon (\R^{n})^v \to \R \,\,\textrm{respectively belonging to $\mathcal{H}^*$ and $\mathcal{H}$},\\
\mathrm{where} \,\, Lip(G)=\sup_{x\neq y} \frac{|G(x)-G(y)|}{\| x_1-y_1 \|+\|x_2-y_2\|+ \ldots+ \|x_n-y_n\|},
\end{array}
\right.
\end{equation*}
and where $c$ is a constant independent of $r$.
We call $(\theta(r))_{r \in \R^{+}}$ the sequence of the $\theta$-coefficients. 
\end{Definition}

The $\theta$-weak dependence condition is stronger than the one for $\eta$-weak dependence.
Hence, moment conditions and decay demands on the rate of the $\theta$-coefficients for central limit theorems are typically weaker than in the case of $\eta$-weak dependence, see \cite{DD03}.
It should be also noticed that $\eta(r) \leq \theta(r)$ and that in the case of integer valued processes, \cite{DF12}, the $\theta$-weak dependence implies the strong mixing condition. 

A causal MMA process is defined as follows
\begin{Definition}
 Let $\Lambda$ be an $\R^d$-valued L\'evy basis on $S \times \R^+$ and let $f: S \times \R^+ \to M_{n\times d}(\R)$ be a $\mathcal{B}(S \times \R^+)$-measurable function
satisfying assumptions (\ref{ass1}), (\ref{ass2}) and (\ref{ass3}). Then, the process 
 \begin{equation}
 \label{mma_causal}
 X_t\colon = \int_S \int_{-\infty}^t f(A,t-s)\,\, \Lambda(dA,ds) 
 \end{equation}
is well defined for each $t\in \R$, infinitely divisible and strictly stationary. It is called a causal $n$-dimensional mixed moving average process and $f$ its kernel function. 
\end{Definition}

\begin{Proposition}
\label{tre_theta}
Let $\Lambda$ be an $\R^d$-valued L\'evy basis with characteristic quadruple $(\gamma,\Sigma,\nu,\pi)$ such that $\E[L_1]=0$ and $\int_{\|x\| >1} \|x\|^2 \nu(dx) < \infty$, $f: S \times \R^+ \to M_{n \times d}(\R)$ a $\mathcal{B}(S \times \R^+)$-measurable function and $f \in L^2(S \times \R^+, \pi \otimes \lambda)$. Then, the resulting causal MMA process $X$ is a $\theta$-weakly dependent process with coefficients
\begin{equation}
\label{coefficients_theta}
\theta_X(r)= \Big( \int_S \int_{-\infty}^{-r} tr(f(A,-s)\Sigma_L f(A,-s)^{\prime}) \, ds \, \pi(dA) \Big)^{\frac{1}{2}}
\end{equation}
for all $r \geq 0$, where  $\E[L_1L_1^{\prime}]=\Sigma_L=\Sigma+\int_{\R^d} x x^{\prime} \nu(dx)$.
\end{Proposition}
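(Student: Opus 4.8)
The plan is to follow the truncation argument of Proposition \ref{tre}, but to exploit causality so that only the \emph{future} block of variables needs to be approximated. Because $X_t$ now depends on $\Lambda$ only through $S\times(-\infty,t]$, the past vector $X_i^*=(X_{i_1},\ldots,X_{i_u})$ is automatically $\mathcal{A}_{i_u}$-measurable, so there is no need to truncate it. This asymmetry is exactly what produces a bound of the $\theta$-type, in which only $Lip(G)$ and $\|F\|_{\infty}$ appear, rather than the symmetric $\eta$-bound of the non-causal case; accordingly I only need $F\in\mathcal{H}^*$ (bounded) and $G\in\mathcal{H}$ (bounded and Lipschitz).

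First I would introduce, for $m\geq0$, the one-sided truncation $X_t^{(m)}=\int_S\int_{t-m}^t f(A,t-s)\,\Lambda(dA,ds)$, which is $\Lambda$-integrable and square integrable by the $L^2$ assumption on $f$ together with Theorem \ref{uno} and Proposition \ref{moment1}. Since $\E[L_1]=0$, the remainder $X_t-X_t^{(m)}=\int_S\int_{-\infty}^{t-m} f(A,t-s)\,\Lambda(dA,ds)$ is centred, and the covariance formula of Theorem \ref{uno} yields, exactly as in the derivation of (\ref{res}),
\[
\E\|X_t-X_t^{(m)}\|^2=\int_S\int_{-\infty}^{t-m} tr\big(f(A,t-s)\Sigma_L f(A,t-s)^{\prime}\big)\,ds\,\pi(dA).
\]
The substitution $w=t-s$ rewrites this as $\int_S\int_m^{\infty} tr(f(A,w)\Sigma_L f(A,w)^{\prime})\,dw\,\pi(dA)$, which for $m=r$ is precisely $\theta_X(r)^2$; hence $\E\|X_t-X_t^{(m)}\|\leq\theta_X(r)$ by Cauchy--Schwarz, uniformly in $t$ by stationarity.

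The key step is the independence argument. Fix $m=r$ and truncate only the future coordinates, setting $X_j^{*(r)}=(X_{j_1}^{(r)},\ldots,X_{j_v}^{(r)})$. This vector is measurable with respect to the $\sigma$-field generated by $\Lambda$ on $\bigcup_k S\times[j_k-r,j_k]\subseteq S\times[j_1-r,\infty)$. Since $j_1\geq i_u+r$, we have $j_1-r\geq i_u$, so this set meets $S\times(-\infty,i_u]$ at most in $S\times\{i_u\}$, which is a $\pi\times\lambda$-null set and therefore carries no mass of $\Lambda$. Consequently $X_j^{*(r)}$ is independent of $X_i^*$, giving $Cov(F(X_i^*),G(X_j^{*(r)}))=0$. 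Writing $|Cov(F(X_i^*),G(X_j^*))|\leq|Cov(F(X_i^*),G(X_j^*)-G(X_j^{*(r)}))|$ and using $\|F\|_{\infty}\leq1$ together with the Lipschitz property of $G$, I bound the right-hand side by $2\|F\|_{\infty}Lip(G)\sum_{k=1}^v\E\|X_{j_k}-X_{j_k}^{(r)}\|\leq 2\,v\,\|F\|_{\infty}Lip(G)\,\theta_X(r)$, which is the required $\theta$-bound with $c=2$.

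Finally, $\theta_X(r)\to0$ as $r\to\infty$ by dominated convergence, since $tr(f\Sigma_L f^{\prime})\leq\|\Sigma_L\|\,\|f\|^2$ is $\pi\otimes\lambda$-integrable by hypothesis. I expect the main obstacle to be, as usual, the handling of the boundary case $j_1-r=i_u$: one must invoke the L\'evy-basis property that $\pi\times\lambda(S\times\{i_u\})=0$ to retain \emph{exact} independence after a single one-sided truncation, precisely the mechanism underlying the disjointness discussion in Proposition \ref{tre}, which here replaces the symmetric $\eta$-coefficient by the one-tailed $\theta$-coefficient.
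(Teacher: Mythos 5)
Your proposal is correct and follows essentially the same route as the paper's proof: the one-sided truncation $X_t^{(m)}=\int_S\int_{t-m}^t f(A,t-s)\,\Lambda(dA,ds)$, the second-moment bound on $\E\|X_t-X_t^{(m)}\|$ via the covariance formula, the independence of $F(X_i^*)$ and $G(X_j^{*(r)})$ from the (up to a $\pi\times\lambda$-null set) disjointness of $S\times(-\infty,i_u]$ and $S\times[j_1-r,j_1]$, and the resulting bound $2\,v\,Lip(G)\,\theta_X(r)$. No gaps.
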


\begin{proof}
First, we define $\forall t \in \R$ and $m\geq 0$ the truncated sequence
\begin{equation}
\label{truncated_theta}
X_t^{(m)}=\int_S  \int_{-\infty}^{t} f(A,t-s) \mathbb{1}_{[0,m]}(t-s) \, \Lambda(dA,ds)= \int_S \int_{t-m}^{t} f(A,t-s) \,\, \Lambda(dA,ds).
\end{equation}
Since the kernel function $f$ is square integrable, we have that properties (\ref{ass2}) and (\ref{ass3}) hold and then $f$ is $\Lambda$-integrable (Theorem \ref{uno}) and $X$ is well defined. Thus, because of Proposition \ref{moment1}, $\E[X_t^2] < \infty$ for all $t \in \R$ and we can determine an upper bound of the expectation 
\[
\E\|X_t-X_t^{(m)}\|= \E \Big \| \int_S \int_{-\infty}^{t-m} f(A,t-s) \, \Lambda(dA,ds) \Big \| 
\]
\[
\leq \Big(\E \Big \| \int_S \int_{-\infty}^{t-m} f(A,t-s) \, \Lambda(dA,ds) \Big \|^2 \Big)^{\frac{1}{2}}.
\]
Due to the stationarity of $X$ the above estimation is independent of $t$ and equal to
\begin{equation}
\label{res_theta}
\Big( \int_S\int_{-\infty}^{-m} tr(f(A,-s)\Sigma_L f(A,-s)^{\prime}) \, ds \pi(dA) \Big)^{\frac{1}{2}}
\end{equation}

Let $F$ and $G$ belong respectively to the class of bounded functions $\mathcal{H}^*$ and $\mathcal{H}$, 
$(u,v) \in \mathbb{N}^* \times \mathbb{N}^*$, $r \in \R^{+}$, $(i_1,\ldots,i_u) \in \R^u$ and $(j_1,\ldots,j_v) \in \R^v$ with $i_1\leq\ldots\leq i_u \leq i_u+r\leq j_1\leq \ldots\leq j_v$, $X_i^*=(X_{i_1},\ldots,X_{i_u})$ and $X_j^{*(m)}=(X_{j_1}^{(m)},\ldots,X_{j_v}^{(m)})$ where for all $m\geq 0$
\begin{equation*}
X_{i_u}=\int_S\int_{-\infty}^{i_u} f(A,i_u-s) \, \Lambda(dA,ds)
\end{equation*}
\begin{equation}
\label{ind2}
\textrm{and} \hspace{9.6cm} 
\end{equation}
\begin{equation*}
X_{j_1}^{(m)}=\int_S\int_{j_1-m}^{j_1} f(A,j_1-s) \, \Lambda(dA,ds).
\end{equation*}

Then, if $j_1-m-i_u\geq 0$, which can also be expressed as $j_1-i_u \geq m$, $I_u=S \times (-\infty,i_u]$ and $J_1=S \times [j_1-m,j_1]$ are disjoint sets or they have intersection $S \times \{j_1-m\}$ when $j_1-m=i_u$. Noting that $\pi \times \lambda (S\times \{j_1-m\})=0$, by the definition of a L\'evy basis, the two sequences $(X_i)_{i_1,\ldots, i_u}$ and $(X_j^{(m)})_{j_1,\ldots,j_v}$ are independent and so are $F(X_i^{*})$ and $G(X_j^{*(m)})$.
Therefore, let $m=r$
\[
|Cov(F(X_i^*),G(X_j^*))|\leq |Cov(F(X_i^*), G(X_j^{*})-G(X_j^{*(m)}))|
\]
\[
+ |Cov(F(X_i^*),G(X_j^{*(m)}))| \leq 2 \E |G(X_j^*)-G(X_j^{*(m)})| 
\]
the last relation comes from $\|F\|_{\infty}\leq 1$
\[
\leq 2  Lip(G) \sum_{k=1}^v \E\|X_{j_k}-X_{j_k}^{(m)}\| 
\]
using the result (\ref{res_theta}) 
\[
\leq 2  v Lip(G)  \Big( \int_S\int_{-\infty}^{-r} tr(f(A,-s)\Sigma_L f(A,-s)^{\prime})  \, ds \pi(dA) \Big)^{\frac{1}{2}}
\]
\[
=2 v Lip(G) \,\theta_X(r),
\]
which converges to zero as $r$ goes to infinity by applying the dominated convergence theorem.
\end{proof}

Also in the case of $\theta$-weak dependence, the $\theta$-coefficients change when the underlying L\'evy process has mean different from zero.

\begin{Corollary}
\label{tre_theta_non}
Let $\Lambda$ be an $\R^d$-valued L\'evy basis with characteristic quadruple $(\gamma,\Sigma,\nu,\pi)$ such that $\int_{\|x\| >1} \|x\|^2 \nu(dx) < \infty$, $f: S \times \R^+ \to M_{n \times d}(\R)$ a $\mathcal{B}(S \times \R^+)$-measurable function satisfying assumption (\ref{ass1}) and $f \in L^2(S \times \R^+, \pi \otimes \lambda)$. Then, the resulting causal MMA process $X$ is a $\theta$-weakly dependent process with coefficients
\begin{equation}
\label{coefficients_theta_1}
\begin{array}{ll}
\theta_X(r)=& \Big( \int_S \int_{-\infty}^{-r} tr(f(A,-s)\Sigma_L f(A,-s)^{\prime}) \, ds \, \pi(dA)\\
&+ \| \int_S \int_{-\infty}^{-r} f(A,-s) \mu \, ds \, \pi(dA)\|^2  \Big)^{\frac{1}{2}}
\end{array}
\end{equation}
for all $r \geq 0$, where $\E[L_1]=\mu=\gamma+\int_{\|x\|>1} x \,\nu(dx)$ and $\E[L_1L_1^{\prime}]=\Sigma_L=\Sigma+\int_{\R^d} x x^{\prime} \,\nu(dx)$.
\end{Corollary}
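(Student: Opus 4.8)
The plan is to run the argument of Proposition \ref{tre_theta} essentially unchanged, modifying only the estimate of the truncation error $\E\|X_t-X_t^{(m)}\|$ so as to retain the contribution of the now nonzero mean $\mu=\E[L_1]$. In the proof of Proposition \ref{tre_theta} the hypothesis $\E[L_1]=0$ entered exactly once, namely to identify the second moment of the tail integral with its variance; dropping it introduces precisely one additional summand, the squared mean, which is the new term appearing in $\theta_X(r)$ in (\ref{coefficients_theta_1}). This is the exact analogue, in the causal setting, of the passage from Proposition \ref{tre} to Corollary \ref{tre_non}.

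Concretely, I would keep the truncated process $X_t^{(m)}$ of (\ref{truncated_theta}). Square integrability of $f$ together with $\int_{\|x\|>1}\|x\|^2\,\nu(dx)<\infty$ again forces (\ref{ass2}) and (\ref{ass3}), while (\ref{ass1}) is assumed, so $f$ is $\Lambda$-integrable by Theorem \ref{uno} and, by Proposition \ref{moment1} with $r=2$, we have $\E\|X_t\|^2<\infty$ and $\E[X_t]$ is a finite vector. The tail integral $X_t-X_t^{(m)}=\int_S\int_{-\infty}^{t-m}f(A,t-s)\,\Lambda(dA,ds)$ is infinitely divisible; by the triplet formulas of Theorem \ref{uno} its covariance matrix is $\int_S\int_{-\infty}^{t-m}f(A,t-s)\Sigma_L f(A,t-s)'\,ds\,\pi(dA)$ and its mean is $\int_S\int_{-\infty}^{t-m}f(A,t-s)\mu\,ds\,\pi(dA)$.

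The one genuinely new computation is the elementary decomposition $\E\|Z\|^2=\operatorname{tr}(\operatorname{Var}(Z))+\|\E[Z]\|^2$, valid for any square-integrable $\R^n$-valued $Z$. Applying it to $Z=X_t-X_t^{(m)}$, combining with Jensen's inequality $\E\|Z\|\le(\E\|Z\|^2)^{\frac12}$, and reducing to $t=0$ by stationarity via the substitution $u=t-s$, the estimate (\ref{res_theta}) gets replaced by
\[
\E\|X_t-X_t^{(m)}\|\le\Big(\int_S\int_{-\infty}^{-m}\operatorname{tr}(f(A,-s)\Sigma_L f(A,-s)')\,ds\,\pi(dA)+\Big\|\int_S\int_{-\infty}^{-m}f(A,-s)\mu\,ds\,\pi(dA)\Big\|^2\Big)^{\frac12},
\]
which for $m=r$ is exactly $\theta_X(r)$ of (\ref{coefficients_theta_1}). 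From here the reasoning is identical to Proposition \ref{tre_theta}: for $j_1-i_u\ge r$ the index sets $S\times(-\infty,i_u]$ and $S\times[j_1-r,j_1]$ overlap in at most a $\pi\times\lambda$-null set, so $F(X_i^*)$ and $G(X_j^{*(m)})$ are independent; splitting the covariance, using $\|F\|_\infty\le1$ and the Lipschitz property of $G$, and summing the $v$ truncation errors yields $|Cov(F(X_i^*),G(X_j^*))|\le 2v\,Lip(G)\,\theta_X(r)$.

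I expect the only delicate point to be the verification that $\theta_X(r)\to0$ as $r\to\infty$, and more specifically that the new squared-mean term vanishes. The variance term tends to zero by dominated convergence exactly as before. For the mean term one cannot simply invoke $f\in L^1$, since an $f\in L^2$ on the infinite domain $S\times\R^+$ need not be integrable; instead I would argue that $\E[X_t^{(m)}]\to\E[X_t]$. This follows because $X_t^{(m)}\to X_t$ in probability while $\sup_m\E\|X_t^{(m)}\|^2<\infty$ — the variances are dominated by $\operatorname{Var}(X_t)$ and the means are bounded using that (\ref{ass1}) makes $\gamma_{int}$ finite and that $\int_{\|x\|>1}\|x\|\,\nu_{int}(dx)<\infty$ — so the family is uniformly integrable and convergence in probability upgrades to $L^1$. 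Consequently $\E[X_t-X_t^{(m)}]$, which is precisely the tail mean appearing in $\theta_X$, tends to the zero vector, which completes the proof.
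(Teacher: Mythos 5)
Your proposal is correct and follows exactly the route the paper intends: the paper states this corollary without proof as the causal analogue of Corollary \ref{tre_non}, whose proof consists precisely of replacing the truncation bound (\ref{res_theta}) by $\bigl(\mathrm{tr}(\mathrm{Var})+\|\E\|^2\bigr)^{1/2}$ via $\E\|Z\|\le(\E\|Z\|^2)^{1/2}$ and then repeating the independence/covariance-splitting argument of Proposition \ref{tre_theta} with $m=r$. Your extra care about why the tail mean vanishes is a welcome addition (the paper is silent on it); note it can also be seen directly, since assumption (\ref{ass1}) together with $\int_{\|x\|>1}\|x\|^2\,\nu(dx)<\infty$ implies $(A,s)\mapsto f(A,s)\mu$ is in $L^1(S\times\R^+,\pi\otimes\lambda)$, so dominated convergence applies to the mean term just as to the variance term.
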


We conclude the study of the $\theta$-weak dependence properties of an MMA process with the computation of the $\theta$-coefficients for an underlying L\'evy process of finite variation.
\begin{Corollary}
\label{quattro_theta}
Let $\Lambda$ be an $\R^d$-valued L\'evy basis with characteristic quadruple $(\gamma,0,\nu,\pi)$ such that $\int_{\R^d} \|x\| \nu(dx) < \infty$, $f: S \times \R^+ \to M_{n \times d}(\R)$ a $\mathcal{B}(S \times \R^+)$-measurable function and $L^1(S \times \R^+, \pi \otimes \lambda)$ and define $\gamma_0$ as in (\ref{yei}).
Then, the resulting causal MMA process $X$ is a $\theta$-weakly dependent process with coefficients
\begin{equation}
\label{coefficients2_theta}
\theta_X(r)=  \int_S \int_{-\infty}^{-r} \int_{\R^d} \| f(A,-s)x\| \,\nu(dx)\, ds \, \pi(dA)
\end{equation}
\[
+\int_S \int_{-\infty}^{-r} \| f(A,-s) \gamma_0 \|  \, ds \, \pi(dA),
\]
for all $r \geq 0$.
\end{Corollary}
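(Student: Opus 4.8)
The plan is to follow the one-sided truncation scheme of the proof of Proposition \ref{tre_theta}, but to replace the second-moment (Cauchy--Schwarz) control of the truncation error by a first-moment estimate, exactly as in the passage from Proposition \ref{tre} to Corollary \ref{quattro}. Since $f \in L^1(S \times \R^+, \pi \otimes \lambda)$ and $\int_{\R^d} \|x\| \, \nu(dx) < \infty$, assumptions (\ref{ass4}) and (\ref{ass5}) of Corollary \ref{due} are satisfied, so $f$ is $\Lambda$-integrable, the causal process $X$ is well defined, and by Corollary \ref{moment2}(i) (with $r=1$) its first moment is finite.

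First I would introduce the causal truncated process (\ref{truncated_theta}), $X_t^{(m)} = \int_S \int_{t-m}^t f(A,t-s)\,\Lambda(dA,ds)$, so that $X_t - X_t^{(m)}$ is supported on the single left tail $S \times (-\infty, t-m]$; note that in the causal setting only this one tail appears, in contrast with the two-sided truncation of Corollary \ref{quattro}. Using the $\omega$-wise Lebesgue representation (\ref{yei2}), I would write $X_t - X_t^{(m)}$ as the sum of a deterministic drift integral against $\gamma_0$ and a Poisson integral against $\mu$, apply the triangle inequality inside the Lebesgue integral, and take expectations; since the expectation of the Poisson integral equals the integral against its intensity $\nu \times \pi \times \lambda$, stationarity then gives, independently of $t$,
\[
\E\|X_t - X_t^{(m)}\| \leq \int_S \int_{-\infty}^{-m} \int_{\R^d} \|f(A,-s)x\|\,\nu(dx)\,ds\,\pi(dA) + \int_S \int_{-\infty}^{-m} \|f(A,-s)\gamma_0\|\,ds\,\pi(dA).
\]

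Next I would run the independence argument exactly as in Proposition \ref{tre_theta}: for $F \in \mathcal{H}^*$, $G \in \mathcal{H}$ and indices with $i_1 \leq \ldots \leq i_u \leq i_u + r \leq j_1 \leq \ldots \leq j_v$, the choice $m = r$ makes the region $S \times (-\infty, i_u]$, on which $X_i^*$ depends, and the region $S \times [j_1 - m, j_v]$, which contains the supports of all the $X_{j_k}^{(m)}$, disjoint up to the $\pi \times \lambda$-null set $S \times \{j_1 - m\}$, because $j_1 - m \geq i_u$. Hence $F(X_i^*)$ and $G(X_j^{*(m)})$ are independent. Decomposing $|Cov(F(X_i^*),G(X_j^*))|$ into a term with $G - G^{(m)}$ and a term with $G^{(m)}$, the latter vanishes by independence, while the former is bounded, using $\|F\|_\infty \leq 1$ and the Lipschitz property of $G$, by $2\,Lip(G)\sum_{k=1}^v \E\|X_{j_k} - X_{j_k}^{(m)}\|$. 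Inserting the truncation estimate above with $m = r$ yields $|Cov(F(X_i^*),G(X_j^*))| \leq 2\,v\,Lip(G)\,\theta_X(r)$ with $\theta_X(r)$ as in (\ref{coefficients2_theta}), and dominated convergence (the integrands being dominated by the integrable functions furnished by (\ref{ass4}) and (\ref{ass5})) shows $\theta_X(r) \to 0$.

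The only genuinely delicate point is the first-moment control of the truncation error: unlike the $L^2$ case, where orthogonality of disjoint contributions makes the variance additive, here I must pass the expectation through the $\omega$-wise Poisson integral and bound it by $\int_{\R^d} \|f(A,-s)x\|\,\nu(dx)$. This is precisely what the finite-variation hypothesis and the $L^1$ assumption on $f$ guarantee via Corollary \ref{due}; everything else is a transcription of the causal, one-sided argument of Proposition \ref{tre_theta}.
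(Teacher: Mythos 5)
Your proposal is correct and follows essentially the same route as the paper: establish integrability via (\ref{ass4})--(\ref{ass5}) and Corollary \ref{moment2}, bound $\E\|X_t-X_t^{(m)}\|$ by the one-sided first-moment estimate obtained from the $\omega$-wise representation (\ref{yei2}), and then run the causal independence argument of Proposition \ref{tre_theta} with $m=r$ before concluding by dominated convergence. The only difference is that you spell out the Campbell-formula justification of the truncation bound, which the paper states without detail as (\ref{trunc_theta}).
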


\begin{proof}
The kernel function $f$ is in $L^1$ then the properties (\ref{ass4}) and (\ref{ass5}) are satisfied and then $f$ is $\Lambda$-integrable and $X$ well defined and with finite mean by Corollary \ref{moment2}.
Using the notation in Proposition \ref{tre_theta}, for all $t \in \R$ and $m\geq 0$
\begin{align}
\label{trunc_theta}
\begin{split}
\E \| X_t-X_t^{(m)}\|  \leq& \int_S \int_{-\infty}^{-m} \int_{\R^d} \|f(A,-s)x\|  \,\nu(dx)\, ds \, \pi(dA) \\
+& \int_S \int_{-\infty}^{-m} \| f(A,-s) \gamma_0 \| \, ds \, \pi(dA),
\end{split}
\end{align}
where $X_t^{(m)}$ is the truncated sequence (\ref{truncated_theta}).
Thus, for $m=r$ and $F$, $G$, $X_i^*$ and $X_j^*$ 
\begin{align*}
&|Cov(F(X_i^*),G(X_j^*))|\\
&\leq 2 v Lip(G) \Big(\int_S \int_{-\infty}^{-r} \int_{\R^d} \| f(A,-s)x\|  \,\nu(dx)\, ds \, \pi(dA)\\
&+\int_S \int_{-\infty}^{-r} \| f(A,-s) \gamma_0 \|  \, ds \, \pi(dA)\Big)\\
&=2  v Lip(G) \, \theta_X(r).
\end{align*}
Finally, we conclude by applying the dominated convergence theorem.
\end{proof}

\begin{Remark}
The $\eta$-coefficients of a causal MMA process can be chosen to be equal to the $\theta$-coefficients for each $r \geq 0$. This can be easily seen by noticing that the truncated sequences (\ref{ind1}) in Proposition \ref{tre} are equal to the truncated sequences (\ref{ind2}) in Proposition \ref{tre_theta}. This leads to select the parameter $m=r$ in both proofs. Moreover, (\ref{res}) is equal to (\ref{res_theta}) and then $\eta_X(r)=\theta_X(r)$. 
The same observations hold when comparing the results in Corollary \ref{tre_non} or Corollary \ref{quattro} with Corollary \ref{tre_theta_non} or Corollary \ref{quattro_theta}.
\end{Remark}
An example of a causal MMA process is the supOU process studied in \cite{BN01} and \cite{BNS11}.
Let us analyze its weak dependence properties in the univariate case.
We consider the kernel function $f(A,s)= \mathrm{e}^{As} 1_{[0,\infty)}(s)$, $A \in \R^{-}$, $s \in \R$ and $\Lambda$ a $1$-dimensional L\'evy basis on $\R^{-}\times\R$
with generating quadruple $(\gamma,\Sigma,\nu,\pi)$ such that
\begin{equation}
\label{supou_ass}
\int_{|x|>1} \log(|x|) \, \nu(dx) < \infty, \,\, \textrm{and} \,\, \int_{\R^{-}} -\frac{1}{A} \pi(dA) < \infty,
\end{equation}
then the process
\begin{equation}
\label{supou}
 X_t=\int_{\R^{-}} \int_{-\infty}^t \mathrm{e}^{A (t-s)} \,\Lambda(dA,ds)
\end{equation}
is well defined for each $t \in \R$ and strictly stationary. For the supOU process, $A$ represents a random mean reversion parameter.

If $\E[L_1]=0$ and $\int_{|x|>1} |x|^2 \nu(dx) < \infty$, (\ref{supou})  is $\theta$-weakly dependent
with coefficients
\begin{equation}
\label{theta_sup}
\theta_X(r)= \Big( \int_{\R^{-}} \int_{-\infty}^{-r} \mathrm{e}^{-2As} \sigma^2 \, ds \, \pi(dA) \Big)^{\frac{1}{2}}=\Big[ -\sigma^2 \int_{\R^{-}} \frac{\mathrm{e}^{2Ar}}{2A} \, \pi(dA)\Big]^{\frac{1}{2}}
\end{equation}
\[
= Cov(X_0,X_{2r})^{\frac{1}{2}},
\]
by using \cite[Theorem 3.11]{BNS11} and where $\sigma^2= \Sigma+ \int_{\R} x^2 \nu(dx)$.

If $\E[L_1]=\mu$ and $\int_{|x|>1} |x|^2 \nu(dx) < \infty$, the supOU process is $\theta$-weakly dependent
with coefficients
\begin{equation}
\label{theta_sup_1}
\theta_X(r) = \Big(Cov(X_0,X_{2r})+\frac{4\mu^2}{\sigma^4}Cov(X_0,X_r)^2\Big)^{\frac{1}{2}}.
\end{equation}
If $\int_{\R} |x| \nu(dx) < \infty$, $\gamma_0=\gamma-\int_{|x|\leq 1} x \,\nu(dx) >0 $ and $\nu(\R^{-})=0 $, i.e. the underlying L\'evy process is a subordinator, then (\ref{supou}) admits $\theta$-coefficients
\begin{equation}
\label{theta_sup_2}
\theta_X(r) = -\mu \int_{\R^{-}} \frac{\mathrm{e}^{Ar}}{A} \, \pi(dA),
\end{equation}
and when in addition $\int_{|x|>1} |x|^2 \nu(dx) < \infty$
\begin{equation}
\label{theta_sup_3}
\theta_X(r)=\frac{2\mu}{\sigma^2} Cov(X_0,X_r).
\end{equation}

Note that in the finite superposition case strong mixing of the supOU process has been shown in \cite{K14,L12} based on Masuda's result \cite{M07}. As this crucially hinges on an embedding into a finite dimensional Markov process this does not readily extend to the general case.

\begin{Remark}
\label{decay}
The necessary and sufficient condition $\int_{\R^{-}} -\frac{1}{A} \,\pi(dA)$ for the supOU process to exist is satisfied by many continuous and discrete distributions $\pi$, see \cite[Section 2.4]{STW15} for more details. For example, a probability measure $\pi$ being absolutely continuous with density $\pi^{\prime}=(-x)^{\alpha} l(x)$ and regularly varying at zero from the left with $\alpha>0$ (see \cite{BGT87}), i.e. l is slowly varying at zero, satisfies the above condition. If moreover, $l(x)$ is continuous in $(-\infty,0)$ and $\lim_{x \to 0^{-}} l(x) >0$ exists, it holds that 
\[
Cov(X_0,X_r)\sim \frac{C}{r^{\alpha}}, \,\,\textrm{with a constant $C>0$ and $r \in \R^+$} 
\]
where for $\alpha \in (0,1)$ the supOU process exhibits long memory and for $\alpha > 1$ short memory, see \cite[Definition 3.1.2]{G12}. Concrete examples where the covariances are calculated explicitely, in this set-up, can be found in \cite{BNL05}.
\end{Remark}
\begin{Remark}
A natural question is whether one can improve the weak dependence coefficients that we obtain. 

		\cite[Lemma 4.1]{DW07} shows that for stationary processes with finite $m$-moments ($m>2+\delta$, for $\delta>0$) being $\lambda$-weakly dependent (cf. \cite[Definition 2.1]{DW07}) and thus $\eta$-weakly dependent 
		\[
		|Cov(X_0,X_r)|\leq 9 \, \E[\|X_0\|^m]^{\frac{1}{m-1}} \, \lambda(r)^{\frac{m-2}{m-1}}.
		\]
		The above arguments can be easily adapted to the causal case and  $\theta$-weak dependence where we likewise get
		\[
		|Cov(X_0,X_r)|\leq 9 \, \E[\|X_0\|^m]^{\frac{1}{m-1}} \, \theta(r)^{\frac{m-2}{m-1}}.
		\]
		If the stationary process has finite moments of any order we thus obtain the inequalities
		\begin{equation}\label{covbound}
		|Cov(X_0,X_r)|\leq 9 \, \eta(r)\,\,\,\, \textrm{and}\,\,\,\, |Cov(X_0,X_r)|\leq 9\,\theta(r).
		\end{equation}
		
		Equation \eqref{theta_sup_3} shows that our weak dependence coefficients are sharp for a supOU process having as underlying L\'evy process a subordinator with finite second moment. Note that ``sharp'' here means that the right and left hand side of the inequalities \eqref{covbound} only differ by a constant, as for the weak dependence coefficients one usually - like in the upcoming CLTs -  only cares about their summability/integrability in $r$. The inequalities \eqref{covbound} compared to \eqref{theta_sup} and \eqref{theta_sup_1} show that we might obtain smaller weak dependence coefficients for the supOU process having an underlying L\'evy process of infinite variation. In fact following Remark \ref{decay}, if $Cov(X_0,X_r)\sim r^{-\alpha}$ for $\alpha >0$, then the left hand side in \eqref{covbound} decays like $r^{-\alpha}$ whereas the right hand side decays like $r^{-\alpha/2} $.
		
		Inspecting the proofs of Corollaries \ref{quattro} and \ref{quattro_theta} and Propositions \ref{tre} and \ref{tre_theta}, where the $\eta$ and $\theta$-coefficients are determined, the crucial issue is that we use the equality (\ref{yei2}) to compute a bound of the term $\E\|X_t-X_t^{(m)}\|$  in the finite variation case, whereas we bound $\E\|X_t-X_t^{(m)}\|$ by means of a second moment in the infinite variation one. We do the latter because to the best of our knowledge there are no sharper bounds known for the first absolute moment of an infinitely divisible distribution that are suitably expressible in terms of the characteristic triplet in this set-up.
	\end{Remark}

To conclude, we state the hereditary property of a $\theta$-weakly dependent process.
\begin{Proposition}
\label{her3}
Let $(X_t)_{t \in \R}$ be an $\R^n$-valued stationary process and assume there exists some constant $C>0$ such that $ \E[\|X_{0}\|^p]^{\frac{1}{p}} \leq C,$ with $p>1$, $h \colon \R^n \to \R^m$ be a function such that $h(0)=0$, $h(x)=(h_1(x),\ldots,h_m(x))$ 
and 
\begin{equation*}
\|h(x)-h(y)\| \leq c \,\|x-y\| (1+\|x\|^{a-1}+\|y\|^{a-1}),
\end{equation*}
for $x,y \in \R^n$, $c>0$ and $1\leq a < p$.
Define $(Y_t)_{t \in \R}$ by $Y_t=h(X_t)$. If  $(X_t)_{t \in \R}$ is a $\theta$-weakly dependent process,
then $(Y_t)_{t \in \R}$ is a $\theta$-weakly dependent process such that
\[
\forall \, r \geq 0, \,\,\, \theta_Y(r)= \mathcal{C} \, \theta_X(r)^{\frac{p-a}{p-1}},
\]
with the constant $\mathcal{C}$ independent of $r$. 
\end{Proposition}

\begin{proof}
Analogous to Proposition \ref{her2}.
\end{proof}

\section{Sample moments of an MMA process}
\label{sec3}

We consider a sample of $N$ observations of a univariate MMA process $\{X_{\Delta},\ldots,X_{N\Delta}\}$, where $\Delta$ is a positive integer.
\begin{equation}
\label{timeseries}
X_{i\Delta}\colon= \int_S \int_{\R} f(A,i\Delta-s) \Lambda(dA,ds),\,\,\, \textrm{for $i \in \Z$.}
\end{equation}
If the underlying L\'evy process $L$ has finite first moment, we define $\tilde{X}_{i\Delta}=X_{i\Delta}-\E[X_0]$. 

The sample mean of the process $X$ is defined as
\begin{equation}
\label{mean}
 \frac{1}{N} \sum_{i=1}^N X_{i\Delta} 
\end{equation}
and its sample autocovariance function at lag $k \in \N$.
\begin{equation}
\label{cova}
\frac{1}{N}\sum_{j=1}^{N} \tilde{X}_{j\Delta}\tilde{X}_{(j+k)\Delta}.
\end{equation}

W.l.o.g, we consider below $\E[X_0]=0$ and $\Delta=1$ in order to lighten the notations and, 
when the asymptotic properties of the sample auto-covariance functions are investigated, we focus on the features of the 
processes 
\begin{equation}
\label{timeseries2}
Y_{j,k}=X_{j} X_{j+k}-D(k) \,\,\, \textrm{for all} \,\,\,  k\in \N,
\end{equation} 
where we denote by $D(k)$ the covariances at lag $k$ defined, when $\E[X_0]=0$, by 
\begin{equation}
\label{passo}
 D(k)= Cov(X_0,X_{k})=\E[X_0X_k]=\int_S\int_{\R} f(A,-s) \Sigma_L f(A,k-s)^{\prime} \,ds \, \pi(dA),
\end{equation}
for $k \in \Z$, where $\E[L_1 L_1^{\prime}]=\Sigma_L=\Sigma+\int_{\R^d} x x^{\prime} \nu(dx)$.

We start by analyzing the asymptotic properties of the sample mean (\ref{mean}) for a non-causal and a causal MMA process.

\begin{Theorem}
\label{clt_mean}
Let $\Lambda$ be an $\R^d$-valued L\'evy basis with characteristic quadruple $(\gamma,\Sigma,\nu,\pi)$ such that $\E[L_1]=0$ and $\int_{\|x\| >1} \|x\|^{2+\delta} \nu(dx) < \infty$, for some $\delta>0 $, $f: S \times \R \to M_{1 \times d}(\R)$ a $\mathcal{B}(S \times \R)$-measurable function and $f \in L^{2+\delta}(S \times \R,\pi \otimes \lambda)\cap L^{2}(S \times \R, \pi \otimes \lambda)$.
If $(X_i)_{i \in \Z}$ as defined in (\ref{timeseries}) is an $\eta$-weakly dependent process with coefficients $\eta_X(r)= O(r^{-\beta})$ and $\beta>4+\frac{2}{\delta}$,
then
\begin{equation}
\label{var1}
\sigma^2_{\eta} = \sum_{k \in \Z} Cov(X_0,X_k)
\end{equation}
is finite, non-negative and as $N \to \infty$
\begin{equation}
\label{thesis}
\frac{1}{\sqrt{N}}\sum_{i=1}^N X_i \stackrel{d}{\rightarrow} \mathcal{N}(0,\sigma^2_{\eta}).
\end{equation}
\end{Theorem}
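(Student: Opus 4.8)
The plan is to reduce the statement to a central limit theorem for a stationary, centered, weakly dependent triangular array and to verify its hypotheses from the moment and decay assumptions. First I would record that the hypotheses of Proposition \ref{moment1} hold with $r=2+\delta$, so that $\E[|X_0|^{2+\delta}]<\infty$; in particular $X$ is centered with finite variance. Next I would turn the $\eta$-weak dependence of Definition \ref{eta_g}, which only controls covariances of bounded Lipschitz functionals, into a bound on the covariances of the (unbounded) variables themselves by a truncation argument: writing $X_t=g_M(X_t)+(X_t-g_M(X_t))$ with $g_M$ the truncation to $[-M,M]$ (which is $1$-Lipschitz and bounded by $M$), applying \eqref{def} to the normalised bounded parts $g_M/M$ and controlling the tails via $\E[|X_0|^{2+\delta}]<\infty$, then optimising in $M$, yields
\[
|Cov(X_0,X_k)|\leq \mathcal{C}\,\eta_X(k)^{\frac{\delta}{1+\delta}},
\]
exactly as in the estimate of \cite{DW07} recalled in the Remark around \eqref{covbound}. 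Since $\eta_X(k)=O(k^{-\beta})$, this gives $|Cov(X_0,X_k)|=O(k^{-\beta\delta/(1+\delta)})$, and $\beta>4+\tfrac{2}{\delta}$ comfortably implies $\beta\delta/(1+\delta)>1$, so the series $\sigma^2_{\eta}=\sum_{k\in\Z}Cov(X_0,X_k)$ converges absolutely. Its non-negativity then follows from $\sigma^2_{\eta}=\lim_{N\to\infty}N^{-1}Var(\sum_{i=1}^N X_i)$, the latter limit being justified by the same summability through a Cesàro argument.

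For the limit law I would use Bernstein's big-block/small-block decomposition together with the Lindeberg--Feller theorem. Partition $\{1,\dots,N\}$ into alternating big blocks of length $p=p_N$ and gaps of length $q=q_N$ with $q=o(p)$, $p=o(N)$ and $p,q\to\infty$; write $S_N=\sum_{i=1}^N X_i$ as the sum of the big-block sums $B_1,\dots,B_k$ ($k\approx N/p$) plus the small-block remainder. The remainder is asymptotically negligible in $L^2$: its variance is $O(kq)=o(N)$ by the covariance summability just established, so it does not contribute after division by $\sqrt N$. The heart of the argument is the asymptotic independence of the big blocks. Applying \eqref{def} to the unit-modulus functionals $\phi_l=\exp(\mathrm{i}t\,B_l/\sqrt N)$, which as functions of their $p$ coordinates have Lipschitz constant $|t|/\sqrt N$ with respect to $\delta_1$, and telescoping the difference between $\E[\prod_{l=1}^k\phi_l]$ and $\prod_{l=1}^k\E[\phi_l]$, each step produces a covariance across a gap of length $q$ bounded by a multiple of $(\text{number of coordinates})\cdot|t|\,N^{-1/2}\eta_X(q)$; summing over the $k$ blocks shows the total error is $O\big(|t|\,\eta_X(q)\,N^{3/2}/p\big)$, which tends to $0$ provided $p\,q^{\beta}$ grows faster than $N^{3/2}$.

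It then remains to prove a central limit theorem for a sum of $k$ independent copies of $B_1$, for which I would verify the two Lindeberg--Feller conditions. The variance condition $k\,Var(B_1)/N\to\sigma^2_{\eta}$ follows from $Var(B_1)/p\to\sigma^2_{\eta}$ (again covariance summability) together with $k\approx N/p$, while the Lindeberg condition is obtained from $\E[B_1^2\mathbb{1}_{|B_1|>\varepsilon\sqrt N}]\leq(\varepsilon\sqrt N)^{-\delta}\E[|B_1|^{2+\delta}]$ and a Rosenthal-type moment inequality for sums of $\eta$-weakly dependent variables in the spirit of \cite{DL99}, giving $\E[|B_1|^{2+\delta}]=O(p^{(2+\delta)/2})$; the Lindeberg ratio is then $O((p/N)^{\delta/2})\to0$. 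Combining the three steps and letting $N\to\infty$ yields convergence of the characteristic functions of $S_N/\sqrt N$ to $\exp(-\sigma^2_{\eta}t^2/2)$, which is \eqref{thesis}.

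The main obstacle is the simultaneous balancing in the last two steps: the asymptotic independence of the blocks forces the gaps $q$ to be large relative to the polynomial factor $N^{3/2}/p$ coming from summing covariances over all $O(k^2)$ block pairs and over all coordinates, whereas the Lindeberg condition (through the moment inequality, which itself requires sufficiently fast decay of $\eta_X$) forces $p$ not to be too large. Choosing $p=N^{a}$, $q=N^{b}$ with $0<b<a<1$ and reconciling $a+b\beta>3/2$ with the decay demanded by the moment inequality of \cite{DL99} is precisely what makes the threshold $\beta>4+\tfrac{2}{\delta}$ appear; the only genuinely delicate input is the weak-dependence moment inequality for the block sums, everything else being a matter of optimising the two exponents $a$ and $b$.
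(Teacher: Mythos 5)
Your argument is correct in outline, but it takes a genuinely different route from the paper: the paper's proof is a two-line verification that $X$ satisfies the hypotheses of \cite[Theorem 2.2]{DW07} (finite $(2+\delta)$-moment via Proposition \ref{moment1}, plus the decay $\eta_X(r)=O(r^{-\beta})$ with $\beta>4+\frac{2}{\delta}$), and both the absolute summability of $\sigma^2_{\eta}$ and the CLT are delegated to that reference. What you do instead is reconstruct the content of that theorem: the truncation argument giving $|Cov(X_0,X_k)|\leq\mathcal{C}\,\eta_X(k)^{\delta/(1+\delta)}$ is exactly \cite[Lemma 4.1]{DW07} (quoted in the paper's remark around \eqref{covbound}), your exponent arithmetic $\beta\delta/(1+\delta)>1$ is right, and the Bernstein blocking with characteristic functions and Lindeberg--Feller is the standard machinery behind the cited invariance principle. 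Your approach buys self-containedness and makes visible where each hypothesis is used; the paper's buys brevity and inherits the sharp constant $4+\frac{2}{\delta}$ without bookkeeping. The one soft spot in your write-up is precisely the step you flag yourself: the Rosenthal-type bound $\E[|B_1|^{2+\delta}]=O(p^{(2+\delta)/2})$ for $\eta$-weakly dependent block sums is not elementary, it is where the threshold $\beta>4+\frac{2}{\delta}$ is actually consumed (your blocking and gap estimates alone would only force a much weaker condition on $\beta$), and you invoke it from \cite{DL99} rather than prove it --- which is legitimate, but means your proof ultimately rests on the same external input that the paper cites in packaged form. A second, minor point: the functionals $\exp(\mathrm{i}tB_l/\sqrt N)$ are complex-valued, whereas Definition \ref{eta_g} is stated for real-valued $F,G$; one should split into real and imaginary parts before applying \eqref{def}, which changes nothing but a constant.
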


\begin{proof}
We have that $\E[X_0^{2+\delta}]<\infty$ for $\delta>0$ because of Proposition \ref{moment1}. Moreover, the $\eta$-weakly dependent process $X$ satisfies the sufficient conditions of \cite[Theorem 2.2]{DW07}. The absolute summability of the series (\ref{var1}) follows and so the asymptotic normality (\ref{thesis}).
\end{proof}

In the case of a causal MMA process, the required decay rate of the $\theta$ coefficients is lower than in the $\eta$-weak dependence case. 
\begin{Theorem}
\label{clt_mean2}
Let $\Lambda$ be an $\R^d$-valued L\'evy basis with characteristic quadruple $(\gamma,\Sigma,\nu,\pi)$ such that $\E[L_1]=0$ and $\int_{\|x\| >1} \|x\|^{2+\delta} \nu(dx) < \infty$, for some $\delta>0 $, $f: S \times \R^+ \to M_{1 \times d}(\R)$ a $\mathcal{B}(S \times \R^+)$-measurable function and $f \in L^{2+\delta}(S \times \R^+,\pi \otimes \lambda)\cap L^{2}(S \times \R^+, \pi \otimes \lambda)$.
If $(X_i)_{i \in \Z}$ as defined in (\ref{timeseries}) is a $\theta$-weakly dependent process with coefficients $\theta_X(r)= O(r^{-\alpha})$ and $\alpha>1+\frac{1}{\delta}$,
then 
\begin{equation}
\label{var2}
\sigma^2_{\theta} = \sum_{k \in \Z} Cov(X_0,X_k)
\end{equation}
is finite, non-negative and as $N \to \infty$
\begin{equation}
\label{thesis2}
\frac{1}{\sqrt{N}}\sum_{i=1}^N X_i \stackrel{d}{\rightarrow} \mathcal{N}(0,\sigma^2_{\theta}).
\end{equation}
\end{Theorem}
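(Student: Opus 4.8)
The plan is to mirror the proof of Theorem~\ref{clt_mean}, replacing the $\eta$-weak dependence central limit theorem of \cite{DW07} by its $\theta$-weak dependence counterpart; as noted after Definition~\ref{theta_g}, this is exactly the setting in which the decay demands are milder, which is why the threshold drops from $\beta>4+\tfrac{2}{\delta}$ to $\alpha>1+\tfrac{1}{\delta}$. First I would secure the moment input: since $f\in L^{2+\delta}(S\times\R^+,\pi\otimes\lambda)$ and $\int_{\|x\|>1}\|x\|^{2+\delta}\,\nu(dx)<\infty$, Proposition~\ref{moment1}(i) applied with exponent $2+\delta\geq 2$ gives $\E[|X_0|^{2+\delta}]<\infty$, which is the only integrability input the limit theorem needs beyond weak dependence.

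Next I would establish the finiteness and non-negativity of $\sigma^2_\theta$. Writing $p=2+\delta$ and using the covariance inequality recorded in the Remark preceding \eqref{covbound}, adapted to the causal $\theta$-weakly dependent case, one has
\[
|Cov(X_0,X_k)|\leq 9\,\E[|X_0|^{p}]^{\frac{1}{p-1}}\,\theta_X(k)^{\frac{p-2}{p-1}}.
\]
Because $\tfrac{p-2}{p-1}=\tfrac{\delta}{1+\delta}$ and $\theta_X(k)=O(k^{-\alpha})$, the series in \eqref{var2} is dominated by $\sum_k k^{-\alpha\delta/(1+\delta)}$, which converges precisely when $\alpha\delta/(1+\delta)>1$, i.e.\ $\alpha>1+\tfrac{1}{\delta}$, exactly the stated hypothesis. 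Hence $\sigma^2_\theta$ is absolutely convergent and finite, and non-negativity is automatic since, by stationarity together with absolute summability, $\sigma^2_\theta=\lim_{N\to\infty}\tfrac{1}{N}\,\mathrm{Var}\big(\sum_{i=1}^N X_i\big)\geq 0$.

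Finally, the central limit theorem itself. The sequence $(X_i)_{i\in\Z}$ is centered (we have assumed $\E[X_0]=0$), stationary, $\theta$-weakly dependent, with finite $(2+\delta)$-moment, and the computation above also yields $\sum_{r}\theta_X(r)^{(p-2)/(p-1)}<\infty$ under $\alpha>1+\tfrac{1}{\delta}$. These are the hypotheses of the stationary central limit theorem for $\theta$-weakly dependent sequences of Dedecker and Doukhan \cite{DD03} (see also the exposition in \cite{DD08}), which then delivers \eqref{thesis2} with limiting variance $\sigma^2_\theta$. The main obstacle I anticipate is not analytic---the estimates are routine once $p=2+\delta$ is fixed---but bibliographic: one must pin down a $\theta$-weak dependence CLT whose summability requirement is exactly $\sum_r\theta_X(r)^{(p-2)/(p-1)}<\infty$ (equivalently $\alpha>1+\tfrac{1}{\delta}$) rather than a strictly stronger rate, and which imposes no regularity beyond stationarity and the moment bound.
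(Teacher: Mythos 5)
Your proposal follows essentially the same route as the paper: finite $(2+\delta)$-moments from Proposition \ref{moment1}, then the $\theta$-weak dependence CLT machinery of \cite{DD03} with the threshold $\alpha>1+\frac{1}{\delta}$. Two points need attention, however. First, you never invoke ergodicity. The limit theorem underlying this result is \cite[Theorem 1]{DR00}, which for a stationary adapted sequence satisfying the relevant projective criterion yields convergence of $N^{-1/2}\sum_{i=1}^N X_i$ to $\sqrt{\eta}\,W_1$ with $\eta$ a non-negative random variable measurable with respect to the invariant $\sigma$-field; only when the process is ergodic does this collapse to the deterministic-variance limit $\mathcal{N}(0,\sigma^2_\theta)$ asserted in the statement. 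The paper supplies this ingredient by citing the mixing (hence ergodicity) of MMA processes from \cite{FS13}; without it your argument only delivers a mixture of normals.

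Second, your finiteness argument for $\sigma^2_\theta$ via the bound $|Cov(X_0,X_k)|\leq 9\,\E[|X_0|^{p}]^{\frac{1}{p-1}}\theta_X(k)^{\frac{p-2}{p-1}}$ is correct as a computation, but absolute summability of covariances is a consequence, not the operative hypothesis of the CLT --- on its own it is too weak to imply any distributional limit. The condition that actually drives the result is the projective criterion $D(2,\theta/2,X_0)$ of \cite{DD03}, which \cite[Lemma 2]{DD03} derives from precisely the moment bound $\E[|X_0|^{2+\delta}]<\infty$ and the decay $\theta_X(r)=O(r^{-\alpha})$ with $\alpha>1+\frac{1}{\delta}$ that you verified; combining this with \cite[Corollary 1]{DD03} and ergodicity, \cite[Theorem 1]{DR00} applies. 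Once the argument is routed through that lemma, the ``bibliographic'' concern in your last paragraph is resolved and your proof coincides with the paper's.
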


\begin{proof}
The MMA process has finite $2+\delta$-moment for $\delta>0$ (Proposition \ref{moment1}) and is ergodic, as shown in \cite{FS13}. By Lemma 2 in \cite{DD03}, the condition $D(2,\theta /2, X_0)$ holds. Then, by Corollary 1 in \cite{DD03} and the ergodicity of the process $X$, (\ref{thesis2}) follows by applying \cite[Theorem 1]{DR00}.
\end{proof}

\begin{Remark}
In the special case of the supOU process, being representable as a finite sum of independent Ornstein-Uhlenbeck processes with gamma or inverse gaussian marginals, a comparable result can be found in \cite[Theorem 2]{L12}. 
\end{Remark}

\begin{Remark}
Theorem \ref{clt_mean} and \ref{clt_mean2} as well as all the upcoming central limit theorems can be also formulated as functional central limit theorems, following \cite{DW07} and \cite{DD03} respectively. However, we state the theorems just for the sample moments we are interested in (and which we are using in Section \ref{sec5}) to lighten the notations.

For example, denote for $t \in[0,1]$ and $n\geq1$
\[
S_n(t)= X_1+\cdots+X_{[nt]}, \,\,\, 
\]
in the case of a non-causal MMA process, and
\[
S^*_n(t)=X_1+\cdots+X_{[nt]}+(nt-[nt]) X_{[nt]+1}
\]
for a causal MMA process. Then, under the assumptions of Theorem \ref{clt_mean} or \ref{clt_mean2}, $n^{-\frac{1}{2}} S_n(t)$ converges in distribution in the Skorohod space $D[0,1]$ to $\sigma_{\eta}W$and $n^{-\frac{1}{2}} S^*_n(t)$ converges in distribution in the space $C[0,1]$ to $\sigma_{\theta}W$, respectively. Here, $W$ denotes a standard Brownian motion.
\end{Remark}

\begin{Remark}
\label{clt_fv}
In the finite variation case, (\ref{thesis}) or (\ref{thesis2}), respectively, hold under \\
$\int_{\|x\| >1} \|x\|^{2+\delta} \nu(dx) < \infty$, for some $\delta >0$, and $f \in L^{2+\delta}(S \times \R,\pi \otimes \lambda)\cap L^{1}(S \times \R, \pi \otimes \lambda)$ or $f \in L^{2+\delta}(S \times \R^+,\pi \otimes \lambda)\cap L^{1}(S \times \R^+, \pi \otimes \lambda)$, respectively.
\end{Remark}

\begin{Remark}
\label{non_degenerate}
Assuming that $f$ is not equal to zero $\pi$-almost everywhere and that $f \geq 0$ or $f \leq 0$, the asymptotic variance in Theorems \ref{clt_mean} and \ref{clt_mean2} is not degenerate.
This is the case for example when working with the supOU process (\ref{supou}). Moreover, it is worthy to observe that the assumptions in Theorem \ref{clt_mean2} clearly indicate that we obtain asymptotic normality of the sample mean for a causal MMA process just in the short memory case.
\end{Remark}

To find an asymptotic distribution for the sample autocovariance functions (\ref{cova}), we first show that $(Y_{j,k})_{j \in \Z}$ are $\eta$-weakly or $\theta$-weakly dependent processes. 
In addition to the hereditary properties in Proposition \ref{her2} and \ref{her3}, we need to establish when the weak dependence properties of 
a univariate MMA process are inherited by the process $Z_t=(X_t,X_{t+1},\ldots,X_{t+k})$ for all $k \in \N$.

\begin{Proposition}
\label{lem}
Let $\Lambda$ be an $\R^d$-valued L\'evy basis with generating quadruple $(\gamma,\Sigma,\nu,\pi)$ and $f:S \times \R \to M_{1 \times d}(\R)$ be a 
$\mathcal{B}(S \times \R)$-measurable function satisfying the assumptions of Theorem \ref{uno}. If for all $t \in \R$, $X$ is a non-causal or causal MMA as defined in (\ref{mma}) or (\ref{mma_causal}) respectively, then\[
Z_t=\int_S \int_{\R} g(A,t-s) \, \Lambda(dA,ds),
\]
where $g(A,s)=\left(
  \begin{array}{l}
\,\,\,\,f(A,s)\\
f(A,s-1)\\
\,\,\,\,\,\,\,\,\ldots\\
f(A,s-k)
\end{array}
\right)$ is a $\mathcal{B}(S \times \R)$-measurable function with values in $M_{k+1 \times d}(\R)$ and $k\in \N$, is an MMA process.
Moreover, if $X$ satisfies the assumptions of Proposition \ref{tre} (Corollary \ref{tre_non}) or Proposition \ref{tre_theta} (Corollary \ref{tre_theta_non}) then $Z$ is $\eta$ or $\theta-$weakly dependent respectively with coefficients
\begin{equation}
\label{coeff_lag}
\eta_Z(r)= \mathcal{D} \, \eta_X(r-2k) \,\textrm{for $r \geq 2k$}\,\,\,\textrm{or}\,\,\, \theta_Z(r)= \mathcal{D} \, \theta_X(r-k) \,\textrm{for $r \geq k$},
\end{equation}
where $\mathcal{D}=(k+1)^{\frac{1}{2}}$.
In the case when the assumptions of Corollaries \ref{quattro} or \ref{quattro_theta} hold, the process $Z$ is respectively $\eta$ or $\theta$-weakly dependent with coefficients
\begin{equation}
\label{coeff_lag_finite}
\eta_Z(r)= \mathcal{D} \,\eta_X(r-2k) \,\textrm{for $r \geq 2k$}\,\,\,\textrm{or}\,\,\, \theta_Z(r)= \mathcal{D} \, \theta_X(r-k) \,\textrm{for $r \geq k$},
\end{equation} 
and $\mathcal{D}=k+1$. 
\end{Proposition}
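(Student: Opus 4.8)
The plan is to recognise $Z$ itself as an MMA process driven by the same L\'evy basis $\Lambda$, and then to apply the weak-dependence results already established for $X$ to the stacked kernel $g$, reading off $\eta_Z$ (or $\theta_Z$) and simplifying it by the block structure of $g$. Writing $g_i(A,s)=f(A,s-i)$ for the $i$-th row of $g$ ($i=0,\ldots,k$), the integral $\int_S\int_\R g_i(A,t-s)\,\Lambda(dA,ds)$ is a time shift of $X$ and recovers the $i$-th coordinate of $Z_t$; stacking these component integrals and using linearity of the integral with respect to $\Lambda$ gives $Z_t=\int_S\int_\R g(A,t-s)\,\Lambda(dA,ds)$. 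Since Lebesgue measure on $\R$ is translation invariant, each $g_i$ lies in the same $L^2$ (respectively $L^1$) space as $f$ and satisfies the same integrability conditions, so $g$ fulfils the hypotheses of Theorem \ref{uno} (respectively Corollary \ref{due}); hence $Z$ is a well-defined $\R^{k+1}$-valued MMA process and the relevant Proposition \ref{tre}/\ref{tre_theta} or Corollary \ref{tre_non}/\ref{tre_theta_non}/\ref{quattro}/\ref{quattro_theta} applies verbatim to $Z$.

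Applying Proposition \ref{tre} to $Z$ produces an $\eta$-coefficient of the form (\ref{coefficients}) with $f$ replaced by $g$. The key algebraic fact is that $tr(g(A,-s)\Sigma_L g(A,-s)^{\prime})=\sum_{i=0}^k f(A,-s-i)\Sigma_L f(A,-s-i)^{\prime}$, since only the diagonal blocks $f_i\Sigma_L f_i^{\prime}$ contribute to the trace. For the left tail I would substitute $w=s+i$ in the $i$-th summand: the integrand becomes $f(A,-w)\Sigma_L f(A,-w)^{\prime}$ and the indicator $\mathbb{1}_{(-\infty,-r)}(2s)$ becomes $\mathbb{1}_{(-\infty,-(r-2i))}(2w)$, so the summand equals the corresponding $X$-tail at threshold $r-2i$; the factor $2s$ inside the indicator is exactly why shifting $s$ by $i$ shifts the cutoff by $2i$. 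The right tail, and the drift term $\|\int g(A,-s)\mu\,\mathbb{1}(\cdots)\|^2=\sum_i|\int f(A,-s-i)\mu\,\mathbb{1}(\cdots)|^2$ arising in the nonzero-mean case of Corollary \ref{tre_non}, are treated identically. Each of these tail integrals is non-increasing in its argument, so every summand is dominated by the one at threshold $r-2k$; summing the $k+1$ copies and taking the outer square root gives $\eta_Z(r)\le (k+1)^{1/2}\eta_X(r-2k)$ for $r\ge 2k$. As any upper bound tending to $0$ is itself an admissible weak-dependence coefficient, one may take $\eta_Z(r)=\mathcal{D}\,\eta_X(r-2k)$ with $\mathcal{D}=(k+1)^{1/2}$.

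The causal ($\theta$) case is identical in structure, but the coefficient (\ref{coefficients_theta}) involves the one-sided tail $\int_{-\infty}^{-r}$ with an indicator in $s$ rather than $2s$; the substitution $w=s+i$ then moves the cutoff by only $i$, the worst summand sits at $r-k$, and one obtains $\theta_Z(r)=(k+1)^{1/2}\theta_X(r-k)$ for $r\ge k$. In the finite-variation cases (Corollaries \ref{quattro}, \ref{quattro_theta}) the coefficients are $L^1$-type integrals of $\|g(A,-s)x\|$ and $\|g(A,-s)\gamma_0\|$ rather than square roots; bounding the Euclidean norm by the $\ell^1$ norm, $\|g(A,-s)x\|=\bigl(\sum_i|f(A,-s-i)x|^2\bigr)^{1/2}\le\sum_i|f(A,-s-i)x|$ (and likewise for $\gamma_0$), and substituting as before yields a sum of $k+1$ shifted $X$-integrals with no outer square root. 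Bounding each by the least favourable shift therefore produces the constant $\mathcal{D}=k+1$, with the same threshold shifts $2k$ (non-causal) and $k$ (causal).

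The delicate point is purely bookkeeping: one must track simultaneously the threshold shift ($2k$ versus $k$) and the constant ($(k+1)^{1/2}$ versus $k+1$). The shift is governed by whether the defining indicator is expressed in $2s$ or in $s$, and the constant by whether the coefficient carries an outer square root (the second-moment cases) or is a bare $L^1$ integral (the finite-variation cases). Monotonicity of the tail integrals in $r$ is what allows all $k+1$ shifted arguments to be replaced by the single least favourable value, and is the only analytic input beyond the change of variables.
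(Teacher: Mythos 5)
Your proposal is correct and follows essentially the same route as the paper's proof: identify $Z$ as an MMA process with the stacked kernel $g$, expand $tr(g\Sigma_L g^{\prime})$ (respectively $\|gx\|$) into its $k+1$ row contributions, shift each by a change of variables, and dominate all shifted tail integrals by the least favourable threshold $r-2k$ (non-causal) or $r-k$ (causal), the square root versus bare $L^1$ integral accounting for $\mathcal{D}=(k+1)^{1/2}$ versus $k+1$. The one place you are quicker than the paper is the verification that $g$ satisfies condition (\ref{ass1}): there the inner truncation indicator $\mathbb{1}_{[0,1]}(\|g(A,s)x\|)$ couples the rows, so translation invariance of each row alone does not immediately give integrability; the paper closes this with the pointwise bound $\mathbb{1}_{[0,1]}(\|g(A,s)x\|)\leq\mathbb{1}_{[0,1]}(\|f(A,s-i)x\|)$ for each $i$.
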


\begin{proof}
\noindent
For $k=1$, the first step of the proof consists of checking that $g$ is a $\Lambda$-integrable function as prescribed by Theorem \ref{uno}.
W.l.o.g, we consider in our calculations the norm
\[
\|(x,y)\|=\|x\|+\|y\|
\]
for $x, y \in M_{1 \times d}(\R)$.
We have that 

\begin{equation}
\label{vec1}
\int_S \int_{\R} \Big\|g(A,s)\gamma+ \int_{\R^d} g(A,s) x \Big(\mathbb{1}_{[0,1]}(\|g(A,s)x\|)-\mathbb{1}_{[0,1]}(\|x\|)\Big) \,\, \nu(dx) \Big\| \, ds  \, \pi(dA) 
\end{equation}
\[
=\int_S \int_{\R} \Big\| \left(
  \begin{array}{l}
\,\,\,\,f(A,s)\\
f(A,s-1)
\end{array}
\right)\gamma 
\]
{\small
\[
+ \int_{\R^d} \left(
  \begin{array}{l}
\,\,\,\,f(A,s)\\
f(A,s-1)
\end{array}
\right) x \Big(\mathbb{1}_{[0,1]}\Big(\Big\| \left(\begin{array}{l}
\,\,\,\,f(A,s)\\
f(A,s-1)
\end{array}
\right)x \Big\|\Big)-\mathbb{1}_{[0,1]}(\|x\|)\Big) \nu(dx) \Big\| \,\ ds  \, \pi(dA)
\]
\[
= \int_S \int_{\R} \Big\|f(A,s)\gamma+ \int_{\R^d} f(A,s) x \Big(\mathbb{1}_{[0,1]}(\|g(A,s)x\|)-\mathbb{1}_{[0,1]}(\|x\|)\Big) \,\, \nu(dx) \Big\| \, ds  \, \pi(dA)
\]
\[
+\int_S \int_{\R} \Big\|f(A,s-1)\gamma+ \int_{\R^d} f(A,s-1) x \Big(\mathbb{1}_{[0,1]}(\|g(A,s)x\|)-\mathbb{1}_{[0,1]}(\|x\|)\Big) \,\, \nu(dx) \Big\| \, ds  \, \pi(dA).
\]}
Noting that 
\begin{equation*}
\mathbb{1}_{[0,1]}(\|g(A,s)x\|) \leq \mathbb{1}_{[0,1]}(\|f(A,s)x\|)
\end{equation*}
\begin{equation*} 
\textrm{and} \hspace{12.6cm} 
\end{equation*}
\begin{equation*}
\mathbb{1}_{[0,1]}(\|g(A,s)x\|) \leq \mathbb{1}_{[0,1]}(\|f(A,s-1)x\|),
\end{equation*}
it then holds that (\ref{vec1}) is finite.

Let us pass to the second condition 
\[
\int_S \int_{\R} \|g(A,s)\Sigma g(A,s)^{\prime}\| \, ds \, \pi(dA)
\]
\[
\leq \int_S \int_{\R} \Big(\|f(A,s) \Sigma f(A,s)^{\prime} \| + \|f(A,s-1) \Sigma f(A,s-1)^{\prime} \| \Big)\,\, ds \,\pi(dA).
\]
Therefore, $f$ being a kernel of an MMA process, the above integral is finite.
Finally, we have 
\[
\int_S \int_{\R} \int_{\R^d} \Big(1 \wedge \|g(A,s)x\|^2 \Big) \, \nu(dx) \, ds \, \pi(dA) 
\]
\[
\leq 2 \int_S \int_{\R} \int_{\R^d} \Big(1 \wedge \|f(A,s)x\|^2 \Big) \, \nu(dx) \, ds \, \pi(dA) 
\]
\[
+ 2 \int_S \int_{\R} \int_{\R^d} \Big(1 \wedge \|f(A,s-1)x\|^2 \Big) \, \nu(dx) \, ds \, \pi(dA) < \infty.
\]
Thus the kernel function $g$ is a $\Lambda$-integrable function. By induction the statement can be shown for each $k \in \N$.
Because all the assumptions of Theorem \ref{uno} hold, we have that $Z$ is an MMA process.

Depending now on the properties of the underlying L\'evy process, we can distinguish three
different scenarios for the $\eta$ and $\theta$-weak dependence. 
When $X$ satisfies the assumptions of Proposition \ref{tre},
\[
\eta_Z(r) = \Big( \int_S\int_{\R} tr(g(A,-s)\Sigma g(A,-s)^{\prime}) \, \mathbb{1}_{(-\infty, -r) }(2s) \, ds \pi(dA) \Big)^{\frac{1}{2}}
\]
\[
+ \Big( \int_S\int_{\R} tr(g(A,-s)\Sigma g(A,-s)^{\prime}) \, \mathbb{1}_{(r, +\infty) }(2s) \, ds \pi(dA) \Big)^{\frac{1}{2}}
\]
$$
\leq \Big( \int_S\int_{\R} tr(f(A,-s)\Sigma f(A,-s)^{\prime}) \, \mathbb{1}_{(-\infty, -r) }(2s) \, ds \pi(dA)+\ldots
$$
\[
+ \int_S\int_{\R} tr(f(A,k-s)\Sigma f(A,k-s)^{\prime}) \, \mathbb{1}_{(-\infty, -r) }(2s) \, ds \pi(dA) \Big)^{\frac{1}{2}}
\]
\[
+ \Big( \int_S\int_{\R} tr(f(A,-s)\Sigma f(A,-s)^{\prime}) \, \mathbb{1}_{(r, +\infty) }(2s) \, ds \pi(dA) + \ldots
\]
\[
+\int_S\int_{\R} tr(f(A,k-s)\Sigma f(A,k-s)^{\prime}) \, \mathbb{1}_{(r, +\infty) }(2s) \, ds \pi(dA) \Big)^{\frac{1}{2}}
\]
\[
\leq (k+1)^{\frac{1}{2}} \Big( \int_S\int_{\R} tr(f(A,-s)\Sigma f(A,-s)^{\prime}) \, \mathbb{1}_{(-\infty, -r +2k) }(2s) \, ds \pi(dA) \Big)^{\frac{1}{2}}
\]
\[
+ \Big( \int_S\int_{\R} tr(f(A,-s)\Sigma f(A,-s)^{\prime}) \, \mathbb{1}_{(r-2k, +\infty) }(2s) \, ds \pi(dA) \Big)^{\frac{1}{2}}
\]
\[
\leq (k+1)^{\frac{1}{2}} \eta_X(r-2k),
\]

for each $r > 2k$.
Thus, $Z$ is a $k+1$-dimensional MMA process with $\eta$ coefficients
\[
\eta_Z(r)=(k+1)^{\frac{1}{2}} \eta_X(r-2k).
\]
If $X$ satisfies the assumptions of Proposition \ref{tre_theta}, it can be shown similarly that $Z$ is $\theta$-weakly dependent with coefficients\[
\theta_Z(r)=(k+1)^{\frac{1}{2}} \theta_X(r-k).
\]

Similar calculations follow in the finite variation case leading to the statements (\ref{coeff_lag_finite}).
\end{proof}

\begin{Proposition}
\label{eta2}
Let $\Lambda$ be an $\R^d$-valued L\'evy basis with characteristic quadruple 
$(\gamma,\Sigma,\nu,\pi)$ and $\int_{\|x\| >1} \|x\|^{2+\delta} \nu(dx) < \infty$,
for some $\delta>0 $, $f: S \times \R \to M_{1 \times d}(\R)$ a $\mathcal{B}(S \times \R)$-measurable 
function and $f \in L^{2+\delta}(S \times \R,\pi \otimes \lambda)\cap L^{2}(S \times \R, \pi \otimes \lambda)$. 
If $(X_i)_{i \in \Z}$ as defined in (\ref{timeseries}) is $\eta$ or $\theta$-weakly dependent respectively with coefficients $\eta_X$ or $\theta_X$,
then for all $k \geq 0$ the processes $(Y_{j,k})_{j \in \Z}$ are respectively $\eta$ or $\theta$-weakly dependent with coefficients
\[
\eta_Y(r)= \mathcal{C} (\sqrt{2} \eta_X(r-2k))^{\frac{\delta}{1+\delta}}\,\,\,\textrm{or}\,\,\,\theta_Y(r)= \mathcal{C} (\sqrt{2} \theta_X(r-k))^{\frac{\delta}{1+\delta}}.
\]
If $L$ is a process of finite variation and $f \in L^{2+\delta}(S \times \R,\pi \otimes \lambda)\cap L^{1}(S \times \R, \pi \otimes \lambda)$, then
\[
\eta_Y(r)= \mathcal{C} (2 \eta_X(r-2k))^{\frac{\delta}{1+\delta}}\,\,\,\textrm{or}\,\,\,\theta_Y(r)= \mathcal{C} (2 \theta_X(r-k))^{\frac{\delta}{1+\delta}}.
\]
The constant $\mathcal{C}$, appearing in the $\eta$ and $\theta$-coefficients, is independent of $r$. 
\end{Proposition}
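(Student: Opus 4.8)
The plan is to represent each process $(Y_{j,k})_{j\in\Z}$ as a polynomial function of a bivariate auxiliary MMA process and then to chain the vector-valued lifting of Proposition \ref{lem} with the polynomial hereditary estimate of Corollary \ref{hereditary_lemma} (resp.\ Proposition \ref{her3} in the $\theta$-case). Recall that $Y_{j,k}=X_jX_{j+k}-D(k)$, which depends only on the two coordinates $X_j$ and $X_{j+k}$, so it suffices to work with the two relevant lags rather than with the full block $X_t,\dots,X_{t+k}$.

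First I would introduce the bivariate process $Z_j=(X_j,X_{j+k})$. This is the MMA process with kernel $g(A,s)=(f(A,s),f(A,s-k))^{\prime}$, i.e.\ the two-row specialization of the kernel in Proposition \ref{lem} corresponding to lags $0$ and $k$. The $\Lambda$-integrability of $g$ and the weak dependence bounds are obtained exactly as in the proof of Proposition \ref{lem}, the only differences being that the $(k+1)$-dimensional prefactor $(k+1)^{1/2}$ (resp.\ $k+1$ in the finite variation case) is replaced by the two-dimensional prefactor $2^{1/2}$ (resp.\ $2$), and that the component $f(A,s-k)$ produces, after the substitution $s\mapsto s+k$ in the defining integral, the index shift $2k$ in the $\eta$-case and $k$ in the $\theta$-case (since $(-\infty,-r)\subseteq(-\infty,-(r-2k))$ and $(-\infty,-r)\subseteq(-\infty,-(r-k))$, respectively). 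This yields that $Z$ is $\eta$- (resp.\ $\theta$-) weakly dependent with
\[
\eta_Z(r)\leq \sqrt{2}\,\eta_X(r-2k)\quad(\text{resp. }\theta_Z(r)\leq \sqrt{2}\,\theta_X(r-k)),
\]
with $\sqrt{2}$ replaced by $2$ when $L$ has finite variation.

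Next I would write $Y_{j,k}=h(Z_j)$ with $h(x,y)=xy-D(k)$. Since adding a constant to a process changes none of the covariances appearing in Definitions \ref{eta_g} and \ref{theta_g} (nor the Lipschitz constants or sup-norms of the test functions), I may replace $h$ by $\tilde h(x,y)=xy$, a polynomial of degree $a=2$ with $\tilde h(0)=0$, which in particular satisfies the growth bound (\ref{hyp_her2}) with $a=2$. The hypotheses $\int_{\|x\|>1}\|x\|^{2+\delta}\nu(dx)<\infty$ and $f\in L^{2+\delta}$ give, via Proposition \ref{moment1}(i), that $\E[\|X_0\|^{2+\delta}]<\infty$, hence $\E[\|Z_0\|^{p}]^{1/p}\leq C$ with $p=2+\delta>a=2$. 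Applying Corollary \ref{hereditary_lemma} (resp.\ Proposition \ref{her3}) to the bivariate process $Z$ and the polynomial $\tilde h$ then produces an $\eta$- (resp.\ $\theta$-) weakly dependent process $(Y_{j,k})_{j\in\Z}$ with
\[
\eta_Y(r)=\mathcal{C}\,\eta_Z(r)^{\frac{p-a}{p-1}}\leq \mathcal{C}\,\big(\sqrt{2}\,\eta_X(r-2k)\big)^{\frac{\delta}{1+\delta}},
\]
using that $t\mapsto t^{(p-a)/(p-1)}$ is increasing and that $\frac{p-a}{p-1}=\frac{\delta}{1+\delta}$ upon substituting $p=2+\delta$, $a=2$. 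The $\theta$-case and the finite variation variants (with $\sqrt{2}$ replaced by $2$) follow identically.

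The only step requiring genuine care is the bivariate lift: one must check that Proposition \ref{lem} applies verbatim to the two non-consecutive lags $0$ and $k$, and note that it is precisely the two-dimensionality of $Z$ that yields the prefactor $\sqrt{2}$ (resp.\ $2$) and the shift $r-2k$ (resp.\ $r-k$) recorded in the statement; had one used the full $(k+1)$-dimensional block, the constant would have been $(k+1)^{1/2}$ instead. Once the bivariate lift and its coefficients are in place, the remainder is a routine composition with the polynomial hereditary bound together with the elementary exponent identity above.
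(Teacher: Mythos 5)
Your proposal is correct and follows essentially the same route as the paper: form the bivariate process $Z_j=(X_j,X_{j+k})$, obtain its coefficients $\sqrt{2}\,\eta_X(r-2k)$ (resp.\ $\sqrt{2}\,\theta_X(r-k)$, and $2$ in place of $\sqrt 2$ in the finite variation case) via the lifting argument of Proposition \ref{lem}, and then compose with $h(x_1,x_2)=x_1x_2$ using Proposition \ref{her2} (resp.\ Proposition \ref{her3}) with $p=2+\delta$, $a=2$ to get the exponent $\frac{\delta}{1+\delta}$. Your extra care in checking that the two-lag specialization of Proposition \ref{lem} (rather than the full $(k+1)$-block) delivers the prefactor $\sqrt{2}$ is a worthwhile elaboration of a step the paper merely asserts, but it is not a different proof.
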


\begin{proof}
Let us consider a $2$-dimensional process $Z=(X_j,X_{j+k})_{j \in \Z}$ with $k \in \N$.
The $\eta$ or $\theta$ coefficients of the process $Z$ are
\[
\eta_Z(r)=\sqrt{2} \eta_X(r-2k) \,\,\,\textrm{or}\,\,\, \theta_Z(r)=\sqrt{2} \theta_X(r-k)
\]
by Proposition \ref{lem}. The $2+\delta$ moment, for $\delta>0$, of the MMA process exists because of Proposition \ref{moment1}. 
Let us now consider $h \colon \R^2 \to R$ as $h(x_1,x_2)=x_1x_2$. The function $h$ satisfies 
assumption (\ref{hyp_her2}), for $p=2+\delta$, $c=1$ and $a=2$. Then, Proposition \ref{her2} or \ref{her3} applies and $h(Z)=X_j X_{j+k}$, 
as well as $Y_{j,k}$, has either coefficients
\[
\eta_Y(r)= \mathcal{C} (\sqrt{2} \eta_X(r-2k))^{\frac{\delta}{1+\delta}}\,\,\, \textrm{or}\,\,\,\theta_Y(r)= \mathcal{C} (\sqrt{2} \theta_X(r-k))^{\frac{\delta}{1+\delta}}.
\]
The finite variation case follows easily by applying Proposition \ref{lem} and using the coefficients (\ref{coeff_lag_finite}).
\end{proof}

We can now give a distributional limit theorem for the processes $Y_{j,k}$, namely determining the asymptotic distribution of 
\[
\frac{1}{N}\sum_{j=1}^{N} Y_{j,k}\,\,\,\, \textrm{for all $k \in \N$}.
\]

\begin{Corollary}
\label{clt_cova}
Let $\Lambda$ be an $\R^d$-valued L\'evy basis with characteristic quadruple $(\gamma,\Sigma,\nu,\pi)$ such that $\E[L_1]=0$, $\int_{\|x\| >1} \|x\|^{4+\delta} \nu(dx) < \infty$, for some $\delta >0$, $f: S \times \R \to M_{1 \times d}(\R)$ a $\mathcal{B}(S \times \R)$-measurable function and $f \in L^{4+\delta}(S \times \R, \pi \otimes \lambda)\cap L^{2}(S \times \R, \pi \otimes \lambda)$. Let  $(Y_{j,k})_{j \in \Z}$ be defined as in (\ref{timeseries2}) for $k \in \N$. 
If $(X_i)_{i \in \Z}$ as defined in (\ref{timeseries}) is $\eta$-weakly dependent with coefficients $\eta_X(r)= O(r^{-\beta})$ such that $\beta>(4+\frac{2}{\delta})(\frac{3+\delta}{2+\delta})$ or it is $\theta$-weakly dependent with coefficients $\theta_X(r)=O(r^{-\alpha})$ such that $ \alpha> (1+\frac{1}{\delta})(\frac{3+\delta}{2+\delta}) $, then
 $$\gamma_k^2=\sum_{l \in \Z} Cov(Y_{0,k},Y_{l,k})= \sum_{l \in \Z} Cov(X_0X_k,X_lX_{l+k})$$ 
is finite, non-negative and as $N \to \infty$
\begin{equation}
\label{asy_ciao}
\frac{1}{\sqrt{N}}\sum_{j=1}^{N} Y_{j,k} \stackrel{d}{\rightarrow} \mathcal{N}(0,\gamma_k^2)
\end{equation}
\end{Corollary}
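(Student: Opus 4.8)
The plan is to reduce the statement to the central limit theorems already proved for the sample mean, namely Theorem \ref{clt_mean} in the $\eta$-weakly dependent case and Theorem \ref{clt_mean2} in the $\theta$-weakly dependent case, applied not to $X$ itself but to the stationary, centred process $(Y_{j,k})_{j\in\Z}$ of (\ref{timeseries2}). Since $\frac{1}{\sqrt N}\sum_{j=1}^N Y_{j,k}$ is precisely the normalised sum whose limit we seek, it suffices to verify that $(Y_{j,k})_{j\in\Z}$ satisfies the hypotheses of those theorems, that is, a finite absolute moment of order strictly above $2$ together with a weak-dependence rate that decays fast enough.

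First I would secure the moment input. By Proposition \ref{moment1}, the hypotheses $\int_{\|x\|>1}\|x\|^{4+\delta}\nu(dx)<\infty$ and $f\in L^{4+\delta}(S\times\R,\pi\otimes\lambda)\cap L^2(S\times\R,\pi\otimes\lambda)$ give $\E[\|X_0\|^{4+\delta}]<\infty$. A Cauchy--Schwarz estimate of the form $\E|X_0X_k|^{q}\le \E|X_0|^{2q}$ combined with stationarity then yields $\E|Y_{0,k}|^{2+\delta'}<\infty$ with $\delta'=\delta/2>0$; in particular $Y_{0,k}$ has a finite moment of order strictly greater than $2$, exactly the moment requirement of the base theorems. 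Next I would transfer the weak dependence from $X$ to $Y_{j,k}$ by invoking Proposition \ref{eta2} (with its moment parameter set to $2+\delta$, reflecting the boosted assumption $X\in L^{4+\delta}$). This identifies $(Y_{j,k})_{j\in\Z}$ as an $\eta$- (resp.\ $\theta$-) weakly dependent process with coefficients $\eta_Y(r)=\mathcal{C}\,(\sqrt2\,\eta_X(r-2k))^{(2+\delta)/(3+\delta)}$ (resp.\ the $\theta$-analogue); the exponent $(2+\delta)/(3+\delta)$ is the degradation produced by passing through the degree-two map $h(x_1,x_2)=x_1x_2$ in the hereditary Propositions \ref{her2}/\ref{her3}. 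Consequently, if $\eta_X(r)=O(r^{-\beta})$ then $\eta_Y(r)=O(r^{-\beta(2+\delta)/(3+\delta)})$, and similarly $\theta_Y(r)=O(r^{-\alpha(2+\delta)/(3+\delta)})$ in the causal case.

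The crux, and the main bookkeeping obstacle, is to check that this degraded rate still clears the decay threshold demanded by Theorem \ref{clt_mean} (resp.\ Theorem \ref{clt_mean2}) for a process of excess moment $\delta'$. Carrying out this arithmetic, and tracking how the factor $(2+\delta)/(3+\delta)$ from the hereditary step interacts with the reduced moment order of $Y_{0,k}$, is exactly what the decay hypotheses $\beta>(4+\tfrac2\delta)\big(\tfrac{3+\delta}{2+\delta}\big)$ and $\alpha>(1+\tfrac1\delta)\big(\tfrac{3+\delta}{2+\delta}\big)$ in the statement are designed to encode. Everything else is then automatic: $X$, being a measurable functional of the underlying L\'evy basis, is ergodic by \cite{FS13}, so any process built from finitely many of its coordinates, in particular $(Y_{j,k})_{j\in\Z}$, is ergodic as well, which supplies the ergodicity hypothesis of the underlying limit theorems of \cite{DW07} and of \cite{DD03,DR00}. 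Finiteness and non-negativity of $\gamma_k^2=\sum_{l\in\Z}Cov(Y_{0,k},Y_{l,k})$ emerge from the same theorems as the statement of absolute summability of the covariance series, and the asymptotic normality (\ref{asy_ciao}) follows directly.
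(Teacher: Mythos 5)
Your proposal is correct and follows essentially the same route as the paper's own proof: Proposition \ref{moment1} supplies the $(4+\delta)$-moments of $X$ (hence, via Cauchy--Schwarz, moments of order above $2$ for $Y_{j,k}$), Proposition \ref{eta2} transfers the weak dependence to $(Y_{j,k})_{j\in\Z}$ with the hereditary exponent $\tfrac{2+\delta}{3+\delta}$, and the external limit theorems \cite[Theorem 2.2]{DW07} and \cite[Theorem 1]{DR00} then yield the summability of the covariance series and the normal limit. Like the paper, you leave the final verification that the degraded rate clears the decay threshold of those theorems implicit, so nothing is lost relative to the published argument.
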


\begin{proof}
Since the results of Proposition \ref{moment1} apply, by using \cite[Theorem 2.2]{DW07} in the case of an $\eta$-weakly dependent process or \cite[Theorem 1]{DR00} when the process $X$ is $\theta$-weakly dependent, the distributional limit (\ref{asy_ciao}) holds.
\end{proof}

\begin{Remark}
The asymptotic variance $\gamma^2_k$ can be expressed in terms of the fourth order cumulant of a zero mean MMA process and its covariances as follows. Let us consider an $\R^4$-valued MMA process $X=(X_i,X_j,X_k,X_l)$ with $(i,j,k,l) \in \R^4$ and kernel function $g(A,s)=[f(A,s-i),f(A,s-j),f(A,s-k),f(A,s-l)]^{\prime}$ with values in $M_{4 \times d}(\R)$. The L\'evy basis $\Lambda$, underlying the definition of $X$, satisfies the assumptions of Corollary \ref{clt_cova}. Thus, $X$ is infinitely divisible with characteristic triplet $(\gamma_{int}, \Sigma_{int}, \nu_{int})$ as given in Theorem \ref{uno} and characteristic exponent 
\[
log(\E[\mathrm{e}^{\mathrm{i}\langle u, X \rangle}])=\mathrm{i} \langle \gamma_{int}, u \rangle - \frac{1}{2}\langle u, \Sigma_{int} u \rangle + \int_{\R^d} \mathrm{e}^{\mathrm{i}\langle u,x \rangle}-1-\mathrm{i}\langle u,x \rangle \mathbb{1}_{[0,1]}(\|x\|) \,\,\nu_{int}(dx).
\]
We denote by $\kappa(i,j,k,l)$ the fourth order cumulant of X. By \cite[Proposition 4.2.2.]{G12}
\[
\kappa(i,j,k,l)=\E[X_iX_jX_kX_l]-\E[X_iX_j]\E[X_kX_l]-\E[X_iX_k]\E[X_jX_l]-\E[X_iX_l]\E[X_jX_k].
\]
On the other hand, cf. \cite[Definition 4.2.1]{G12},
\[
\kappa(i,j,k,l)=\frac{\partial^4}{\partial u_1 \partial u_2 \partial u_3 \partial u_4} log(\E[\mathrm{e}^{\mathrm{i}\langle u, X \rangle}]) \Big|_{u_1=u_2=u_3=u_4=0}
\]
\begin{equation}
\label{cumulant}
=  \int_S \int_{\R} \int_{\R^d} f(A,s-i)\, x \, x^{\prime} \, f(A,s-j)^{\prime} \, f(A,s-k) \, x \, x^{\prime} \, f(A,s-l)^{\prime} \, \nu(dx) \, ds \, \pi(dA).
\end{equation}

Then, $\forall (l,k) \in \R^2$
\[
Cov(Y_{0,k},Y_{l,k})= \kappa(0,k,l,l+k) + D(l)^2+ D(k+l)D(k-l).
\]
where $D(l)$ is defined in (\ref{passo}). 

Analogously, the formula to compute the third order cumulant $\kappa(i,j,k)$ can be derived. In fact, 
\begin{equation}
\label{cumulant3}
\kappa(i,j,k) = \int_S \int_{\R} \int_{\R^d} f(A,s-i)\, x \, x^{\prime} \, f(A,s-j)^{\prime} \, f(A,s-k) \, x \, \nu(dx) \, ds \, \pi(dA)
\end{equation}
and for a zero mean MMA process holds that $\E[X_iX_jX_k]=\kappa(i,j,k)$. 
This computation is useful in Section \ref{sec5}.
\end{Remark}

\begin{Corollary}
\label{clt_multi_acf}
Let $\Lambda$ be an $\R^d$-valued L\'evy basis with characteristic quadruple $(\gamma,\Sigma,\nu,\pi)$ such that $\E[L_1]=0$, $\int_{\|x\| >1} \|x\|^{4+\delta} \nu(dx) < \infty$, for some $\delta >0$, $f: S \times \R \to M_{1 \times d}(\R)$ a $\mathcal{B}(S \times \R)$-measurable function and $f \in L^{4+\delta}(S \times \R, \pi \otimes \lambda)\cap L^{2}(S \times \R, \pi \otimes \lambda)$. Let  $\mathcal{Z}_j=(Y_{j,0},\ldots,Y_{j,k})$ for all $j \in \Z$. If $(X_i)_{i \in \Z}$ is $\eta$-weakly dependent with coefficients $\eta_X(r)= O(r^{-\beta})$ such that $\beta>(4+\frac{2}{\delta})(\frac{3+\delta}{2+\delta})$ or it is $\theta$-weakly dependent with coefficients $\theta_X(r)=O(r^{-\alpha})$ such that $ \alpha> (1+\frac{1}{\delta})(\frac{3+\delta}{2+\delta}) $, then respectively for each $p,q \in \{0,\ldots,k\}$ with $k \in \N$,
\[
\sum_{l \in \Z} Cov(X_0X_p, X_l X_{l+q}) = \sum_{l \in \Z} k(0,p,l,l+q)+D(l)D(l+q-p)+D(q+l)D(l-p),
\]
where $k(l,i,j,k)$ is defined in (\ref{cumulant}) and $D(k)$ in (\ref{passo}) for each $l,i,j,k \in \Z$,
is finite and as $N \to \infty$
\[
\frac{1}{\sqrt{N}}\sum_{j=1}^{N} \mathcal{Z}_j \stackrel{d}{\rightarrow} \mathcal{N}_{k+1}(0,\Xi)
\]
where $\Xi$ is equal to 
\begin{equation*}
\small
\left[\begin{array}{cccc}
\sum_{l \in \Z} Cov(X_0^2,X_l^2)& \sum_{l\in \Z} Cov(X_0^2,X_lX_{l+1})& \ldots & \sum_{l \in \Z} Cov(X_0^2,X_lX_{l+k})\\
\ldots &\sum_{l \in \Z} Cov(X_0X_1,X_l X_{l+1}) & \ldots & \sum_{l \in \Z} Cov(X_0X_1,X_l X_{l+k})\\
\ldots& \ldots &\ldots & \sum_{l \in \Z} Cov(X_0X_{k},X_l X_{l+k})
\end{array}\right]
\end{equation*}
and positive semidefinite.
\end{Corollary}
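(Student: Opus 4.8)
The plan is to obtain the multivariate limit law from the one-dimensional theory by the Cram\'er--Wold device, reducing the distributional statement to the scalar machinery already exploited in Corollary \ref{clt_cova}; the limiting matrix $\Xi$ is then pinned down by bilinearity of the covariance, its entries are expanded through the fourth order cumulant formula \eqref{cumulant}, and its positive semidefiniteness comes for free from its being a limit of variances.

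Fix $\lambda=(\lambda_0,\dots,\lambda_k)^{\prime}\in\R^{k+1}$. By the Cram\'er--Wold theorem it is enough to show
\[
\frac{1}{\sqrt N}\sum_{j=1}^N \langle\lambda,\mathcal{Z}_j\rangle \stackrel{d}{\to}\mathcal{N}(0,\lambda^{\prime}\Xi\lambda).
\]
Here
\[
\langle\lambda,\mathcal{Z}_j\rangle=\sum_{p=0}^k\lambda_p\big(X_jX_{j+p}-D(p)\big)=h_\lambda(Z_j)-\sum_{p=0}^k\lambda_pD(p),
\]
where $Z_j=(X_j,\dots,X_{j+k})$ and $h_\lambda(z)=\sum_{p=0}^k\lambda_p z_0z_p$ is a homogeneous quadratic form with $h_\lambda(0)=0$. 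Proposition \ref{lem} gives the $\eta$- or $\theta$-weak dependence of $(Z_j)_{j\in\Z}$ with the coefficients stated there; since $h_\lambda$ is a polynomial of degree $a=2$ and $X$ has a finite $(4+\delta)$-th moment by Proposition \ref{moment1}, Corollary \ref{hereditary_lemma} (respectively Proposition \ref{her3}) shows that $(h_\lambda(Z_j))_j$ is again weakly dependent, and subtracting the constant $\sum_p\lambda_pD(p)$ leaves the coefficients unchanged. The hereditary step costs an exponent $(2+\delta)/(3+\delta)$, so the coefficients of the scalar sequence decay like $\eta_X(r-2k)^{(2+\delta)/(3+\delta)}$, respectively $\theta_X(r-k)^{(2+\delta)/(3+\delta)}$; the hypotheses $\beta>(4+\tfrac{2}{\delta})\tfrac{3+\delta}{2+\delta}$ and $\alpha>(1+\tfrac{1}{\delta})\tfrac{3+\delta}{2+\delta}$ are exactly those making this decay fast enough to invoke \cite[Theorem 2.2]{DW07} in the $\eta$-case and \cite[Theorem 1]{DR00} in the $\theta$-case, just as in Corollary \ref{clt_cova}. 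This yields the convergence above with variance $\sigma_\lambda^2=\sum_{l\in\Z}Cov(\langle\lambda,\mathcal{Z}_0\rangle,\langle\lambda,\mathcal{Z}_l\rangle)$.

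It then remains to match $\sigma_\lambda^2$ with $\lambda^{\prime}\Xi\lambda$ and to compute the entries of $\Xi$. By bilinearity of the covariance,
\[
\sigma_\lambda^2=\sum_{p,q=0}^k\lambda_p\lambda_q\sum_{l\in\Z}Cov(X_0X_p,X_lX_{l+q})=\sum_{p,q=0}^k\lambda_p\lambda_q\,\Xi_{pq},
\]
where the interchange of the finite double sum with the series in $l$ is justified by the absolute summability of each $\sum_{l}|Cov(X_0X_p,X_lX_{l+q})|$; this in turn follows from the weak dependence of the two-dimensional blocks together with the covariance bound \eqref{covbound}, the stated decay rates being precisely those ensuring integrability in $r$. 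For the closed form of the entries I would use, exactly as in the Remark after Corollary \ref{clt_cova}, the decomposition of the fourth product moment of a zero mean process into its fourth order cumulant and the three pairwise products of covariances, which gives for all $p,q$
\[
Cov(X_0X_p,X_lX_{l+q})=\kappa(0,p,l,l+q)+D(l)D(l+q-p)+D(l+q)D(l-p),
\]
with $\kappa$ as in \eqref{cumulant} and $D$ as in \eqref{passo}; summing over $l$ yields the formula in the statement.

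Finally, positive semidefiniteness of $\Xi$ is immediate, since for every $\lambda$ one has $\lambda^{\prime}\Xi\lambda=\sigma_\lambda^2=\lim_{N\to\infty}\mathrm{Var}\big(N^{-1/2}\sum_{j=1}^N\langle\lambda,\mathcal{Z}_j\rangle\big)\ge 0$. I expect the only delicate point to be the bookkeeping of the decay exponents: one must verify that composing the hereditary loss $(2+\delta)/(3+\delta)$ with the summability thresholds of \cite{DW07} and \cite{DR00} reproduces the hypotheses on $\beta$ and $\alpha$, and that the same decay simultaneously secures the absolute summability underlying the covariance interchange. Once the scalar CLT is available, the passage to the joint law is a routine Cram\'er--Wold argument.
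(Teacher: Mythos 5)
Your proposal is correct and follows essentially the same route as the paper: Proposition \ref{lem} for the weak dependence of the lag vector $Z_j$, the hereditary property with $p=4+\delta$, $a=2$ (costing the exponent $(2+\delta)/(3+\delta)$), the scalar CLTs of Doukhan--Wintenberger and Dedecker--Rio, and the Cram\'er--Wold device, with the entries of $\Xi$ identified through the cumulant decomposition. The only cosmetic difference is that you compose the linear functional with the quadratic map into a single scalar function $h_\lambda$ before applying the hereditary lemma, whereas the paper first maps $Z_j$ to the $\R^{k+1}$-valued process $(X_j^2,\dots,X_jX_{j+k})$ and then observes that the linear map $a^{\prime}\mathcal{Z}$ preserves the coefficients; the two orderings are equivalent.
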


\begin{proof}
Let us consider the vector $Z$ as defined in Proposition \ref{lem}. Given the assumptions of the Corollary, $Z$ is $\eta$-weakly dependent with coefficients $\eta_Z(r)=\mathcal{D}\eta_X(r-2k)$ or $\theta$-weakly dependent with coefficients $\theta_Z(r)=\mathcal{D}\theta_X(r-k)$ because of Proposition \ref{lem} and given that the results of Proposition \ref{moment1} apply.
We apply now the function $f:\R^{k+1}\to \R^{k+1}$ to the vector $Z$ such that
\[
f(Z_j)=\left(
  \begin{array}{l}
\,\,\,\,X_j^2\\
\,\,\,\,\,\,\vdots\\
\,\,\,\,X_jX_{j+k}\\
\end{array}
\right)= \mathcal{Z}_j + \left(\begin{array}{l}
\,\,\,\,D(0)\\
\,\,\,\,\,\,\vdots\\
\,\,\,\,D(k)\\
\end{array}\right).
\]
The assumptions of Proposition \ref{her2} hold with $p=4+\delta$, $c=1$, $a=2$, then $f(Z_t)$ is $\eta$ or $\theta$-weakly dependent with coefficients $\mathcal{C}(\mathcal{D}\eta_X(r-2k))^{\frac{2+\delta}{3+\delta}}$ or $\mathcal{C}(\mathcal{D}\theta_X(r-k))^{\frac{2+\delta}{3+\delta}}$. 
The process $\mathcal{Z}$ is then a process with the same weak dependence coefficients as $f(Z_t)$.
For all $a \in \R^{k+1}$, $a^{\prime}\mathcal{Z}$ is an $\eta$ or a $\theta$-weakly dependent process, 
because a linear function is Lipschitz, having the same coefficients as the process $\mathcal{Z}$.
By \cite[Theorem 2.2]{DW07} or \cite[Theorem 1]{DR00}, then
\[
\frac{1}{\sqrt{N}} \sum_{j=1}^N a^{\prime}\mathcal{Z}_j \xrightarrow{d} \mathcal{N}(0, a^{\prime} \Sigma a) 
\]
as $N \to \infty$.
Applying the Cramer-Wold device, the asymptotic normality of the vector $\mathcal{Z}$ is shown.
\end{proof}

\begin{Remark}
In this paper we consider the classical case of equidistant observations. In many applications different sampling schemes are also highly relevant and for some special cases results have been obtained. For example,  \cite{BC18} considers the asymptotics of the autocovariance function for L\'evy-driven moving average processes sampled at an independent renewal sequence and \cite{FS18} consider the asymptotics of the pathwise Fourier transform/periodogram for L\'evy-driven CARMA processes sampled at deterministic irregular grids. Considering independent renewal sampled L\'evy-driven MMA processes is beyond the scope of the present paper and the content of future research just starting in  \cite{BCS19} where the preservation of strong mixing and weak dependence properties is discussed in general. 
\end{Remark}

\section{Sample moments of an MMA SV model}
\label{sec4}

Let us consider a L\'evy basis with characteristic quadruple $(\gamma, \sigma^2, \nu, \pi)$ and values in $\R$ and the respective univariate casual MMA process $X$ with kernel function $f: (S \times \R^+)\to \R$. Its dependence structure is given by
\[
Cov(X_0,X_t)=\int_S \int_{-\infty}^0 f(A,-s) \sigma^2 f(A,t-s) \, ds \, d\pi \,\,\, \textrm{with $t\in \R$}
\]
and controlled by the probability measure $\pi$.
If we choose a causal MMA process as the model for the volatility of a logarithmic asset price, its dependence structure can be modeled in a versatile way by choosing the distribution $\pi$. Then, the typical decay of the autocovariances of the squared returns, see \cite{CT04}, can be more easily reproduced.

Let the logarithmic asset price $(J_t)_{t \in \R^+}$ be
\begin{equation}
\label{mmasv}
J_t= \int_0^t\sqrt{X_s} dW_s,\,\,\, J_0=0,
\end{equation}
where $(W_t)_{t \in \R^+}$ is a standard Brownian motion and $(X_t)_{t\in \R^{+}}$ is an adapted, stationary and square-integrable causal MMA process with values 
in $\R^{+}$ being independent of $W$. We call (\ref{mmasv}) an MMA SV model.
In the literature, stochastic volatility models where $X$ is given by a sum of independent non-Gaussian OU type process are given in \cite{BNS02,BNS01} which have been later
extended to supOU processes in \cite{BNS11,BNS13}. The latter is an example of an MMA SV model whose dependence structure is going to be analysed in this section. Other financial market models using supOU processes as building blocks and allowing for short and long range dependence can be found in \cite{HL05,K14,L12}.

We show the $\theta$-weak dependence of the return process, over equidistant time intervals $[(t-1)\Delta,t \Delta]$
\begin{equation}
\label{ret}
Y_t=J_{t\Delta}-J_{(t-1)\Delta}=\int_{(t-1)\Delta}^{t\Delta} \sqrt{X_s} dW_s,
\end{equation}
where for convenience of notation we consider $t \in \R$, and the asymptotic normality of its related sample moments.
To this aim, the moments of the return process by using the It\^o isometry, as in \cite{PS09}, turn out to be determined as a function of the moments of the integrated process
\begin{equation}
\label{integrated}
V_t = \int_{(t-1) \Delta}^{t \Delta} X_s\, ds,  
\end{equation}
for $t \in \R$ and $\Delta$ a positive constant. Note that, $(V_t)_{t \in \R}$ corresponds to the integrated volatility process computed over a time interval $[(t-1)\Delta,t \Delta]$.
It is immediate from the definitions (\ref{ret}) and (\ref{integrated}) that the strict stationarity of the process $(X_s)_{s \in \R}$ and its square-integrability imply the stationarity and the square-integrability of the processes $(Y_t)_{t\in \R}$, $(Y_t^2)_{t \in \R}$ and $(V_t)_{t \in \R}$.
Note that, under the square integrability assumption, the moments of the return process can be determined up to the 4th order.

In general, we consider all processes adapted with respect to the filtration $(\mathcal{A}_t)_{t \in \R}$ generated by the set of random variables $\{\Lambda(B): B \in \mathcal{B}(S \times (-\infty,t])\}$ and the increments of the Brownian motion $\{(W_u-W_s)_{s\leq u \leq t} \}$ for all $t \in \R$. 

In order to ensure that the stochastic integrals involving an MMA process as integrand are well defined we assume throughout that
\begin{equation*}
\textrm{(H)}: \left\{\begin{array}{l}
\textrm{The L\'evy basis $\Lambda$ has generating quadruple $(\gamma,0, \nu,\pi)$ such that}\\
\int_{\R} |x| \nu(dx) < \infty,\,\,\ \gamma-\int_{|x|\leq 1} x \nu(dx) \geq 0\,\,\,\textrm{and}\,\,\,\nu(\R^{-})=0;\\
\textrm{the kernel function $f$ is $\mathcal{B}(S \times \R^+)$-measurable,  non-negative}\\
\textrm{and satisfies the assumptions of Corollary \ref{due};}\\
X_s=\int_{-\infty}^s f(A,s-u) \, \Lambda(dA, du) \, \textrm{is adapted and  c\`adl\`ag}. 
\end{array}\right.
\end{equation*}

Sufficient conditions for an MMA process to have c\`adl\`ag sample paths can be found in \cite[Theorem 3.1]{MS13} and the references therein.

We now show the weak dependence properties of the return process.

\begin{Proposition}
\label{theta_return}
Let W be a standard Brownian motion independent of the L\'evy basis $\Lambda$ and assume Assumptions $(H)$ are satisfied. 
Then, the return process defined in (\ref{ret}) is $\theta$-weakly dependent with coefficients
\begin{equation}
\label{coefficients_return}
\theta_Y(r)= \sqrt{\Delta \,\theta_X((r-1)\Delta)},
\end{equation}
where $(\theta_X(r))_{r \in \R^+}$ are the coefficients (\ref{coefficients2_theta}), for all $r \geq 1$.
\end{Proposition}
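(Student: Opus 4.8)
The plan is to reproduce, at the level of the stochastic integral defining $Y$, the truncation scheme already used for causal MMA processes in Proposition \ref{tre_theta} and Corollary \ref{quattro_theta}. For $m\geq 0$ I would introduce the truncated volatility $X_s^{(m)}=\int_S\int_{s-m}^s f(A,s-u)\,\Lambda(dA,du)$ exactly as in (\ref{truncated_theta}) and the associated truncated return
\[
Y_t^{(m)}=\int_{(t-1)\Delta}^{t\Delta}\sqrt{X_s^{(m)}}\,dW_s .
\]
Under Assumption (H) the kernel is non-negative and the driver is a subordinator, so $X_s^{(m)}\geq 0$ and the square root is well defined. The point of the truncation is that $Y_j^{(m)}$ sees the L\'evy basis only on the window $[(j-1)\Delta-m,\,j\Delta]$ together with the Brownian increments over $[(j-1)\Delta,\,j\Delta]$.

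The crux is the $L^1$-distance between $Y_t$ and its truncation, which I would control via the It\^o isometry. Conditioning on the $\sigma$-field generated by $\Lambda$ and using that $W$ is independent of the volatility, the integrand is deterministic given $\Lambda$, so
\[
\E\big[(Y_t-Y_t^{(m)})^2\big]=\E\Big[\int_{(t-1)\Delta}^{t\Delta}\big(\sqrt{X_s}-\sqrt{X_s^{(m)}}\big)^2\,ds\Big].
\]
Applying the elementary bound $(\sqrt{a}-\sqrt{b})^2\leq|a-b|$ for $a,b\geq 0$ together with the stationarity of $X$ reduces this to $\Delta\,\E|X_0-X_0^{(m)}|$. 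The last quantity is precisely the one estimated in (\ref{trunc_theta}), whence $\E|X_0-X_0^{(m)}|\leq\theta_X(m)$ with $\theta_X$ as in (\ref{coefficients2_theta}); Cauchy--Schwarz then gives $\E|Y_t-Y_t^{(m)}|\leq\sqrt{\Delta\,\theta_X(m)}$. I expect this reduction — pushing the It\^o isometry through the conditioning on the volatility and turning the square-root integrand into the already known first-moment truncation bound for $X$ — to be the main obstacle; everything downstream is the established causal argument.

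For the final assembly I would take indices $i_1\leq\cdots\leq i_u\leq i_u+r\leq j_1\leq\cdots\leq j_v$ and set $m=(r-1)\Delta$. Then the window $[(j_1-1)\Delta-m,\infty)$ starts at $(j_1-r)\Delta\geq i_u\Delta$, so the part of $\Lambda$ feeding $(Y_{j_1}^{(m)},\ldots,Y_{j_v}^{(m)})$ lives on $S\times[i_u\Delta,\infty)$ and meets the past only on the $(\pi\times\lambda)$-null set $S\times\{i_u\Delta\}$; similarly the future Brownian increments lie on $[(j_1-1)\Delta,\infty)\subseteq[i_u\Delta,\infty)$ and do not overlap the past ones (here $r\geq 1$ is used). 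By the independent-scattering property of the L\'evy basis and the independence of Brownian increments, $(Y_{j_1}^{(m)},\ldots,Y_{j_v}^{(m)})$ is therefore independent of $(Y_{i_1},\ldots,Y_{i_u})$, so for $F\in\mathcal H^*$ and $G\in\mathcal H$ one has $Cov(F(Y_i^*),G(Y_j^{*(m)}))=0$. Writing $Cov(F(Y_i^*),G(Y_j^*))=Cov(F(Y_i^*),G(Y_j^*)-G(Y_j^{*(m)}))$, bounding it by $2\|F\|_\infty\,\E|G(Y_j^*)-G(Y_j^{*(m)})|$, and using that $G$ is Lipschitz to get $\E|G(Y_j^*)-G(Y_j^{*(m)})|\leq Lip(G)\sum_{k=1}^v\E|Y_{j_k}-Y_{j_k}^{(m)}|\leq v\,Lip(G)\sqrt{\Delta\,\theta_X((r-1)\Delta)}$, I obtain Definition \ref{theta_g} with $c=2$ and $\theta_Y(r)=\sqrt{\Delta\,\theta_X((r-1)\Delta)}$; the convergence $\theta_Y(r)\to 0$ follows from $\theta_X(m)\to 0$.
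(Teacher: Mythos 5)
Your proposal is correct and follows essentially the same route as the paper's proof: the same truncation $Y_t^{(m)}=\int_{(t-1)\Delta}^{t\Delta}\sqrt{X_s^{(m)}}\,dW_s$, the same It\^o-isometry/Cauchy--Schwarz reduction combined with $(\sqrt{a}-\sqrt{b})^2\leq|a-b|$ to obtain $\E|Y_t-Y_t^{(m)}|\leq\sqrt{\Delta\,\theta_X(m)}$ via the bound (\ref{trunc_theta}), and the same choice $m=(r-1)\Delta$ with the independent-scattering and Brownian-increment independence argument to close the covariance estimate. No gaps.
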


\begin{proof}
The assumptions (H) imply that the resulting MMA process is non-negative and
the process $Y_t$ is well defined and square-integrable by Corollary \ref{moment2}.
Let $Y_t^{(m)}$ be defined for $m \geq 0$
\[
Y_t^{(m)}=\int_{(t-1) \Delta}^{t \Delta} \sqrt{X_s^{(m)} } \, dW_s
\]
where $X_s^{(m)}$ is defined in (\ref{truncated_theta}). Then,
\begin{equation}
\label{res_truncated}
\E[|Y_t-Y_t^{(m)}|]=\E\Big[\Big | \int_{(t-1) \Delta}^{t \Delta} \sqrt{X_s} - \sqrt{X_s^{(m)}} \, dW_s  \Big |\Big] 
\end{equation}
\[
\leq \E\Big[  \int_{(t-1) \Delta}^{t \Delta} ( \sqrt{X_s} - \sqrt{X_s^{(m)}})^2 \, ds\Big]^{\frac{1}{2}}
\]
by using the inequality $\sqrt{a+b}-\sqrt{a} \leq \sqrt{b} \,\, \textrm{for all $a,b \in \R^+$}$
\[
\leq \E\Big[  \int_{(t-1) \Delta}^{t \Delta} |X_s-X_s^{(m)}| \, ds \Big]^{\frac{1}{2}}
\]
\[
\leq \E\Big[  \int_{(t-1) \Delta}^{t \Delta} \Big |  \int_S \int_{-\infty}^{s-m} f(A,s-u) \,\Lambda(dA,du) \, \Big|\Big]^{\frac{1}{2}}\leq \sqrt{\Delta \theta_X(m)}.
\]
The last inequality follows by (\ref{trunc_theta}).

Let $F$ and $G$ belong respectively to the class of bounded functions $\mathcal{H}^*$ and 
$\mathcal{H}$, $(u,v) \in \mathbb{N}^* \times \mathbb{N}^*$, $r \in \R^{+}$, $(i_1,\ldots,i_u) \in \R^u$ and $(j_1,\ldots,j_v) \in \R^v$ with $i_1\leq\ldots\leq i_u \leq i_u+r\leq j_1\leq \ldots\leq j_v$, $Y_i^*=(Y_{i_1},\ldots,Y_{i_u})$ and
$Y_j^{*(m)}=(Y_{j_1}^{(m)},\ldots,Y_{j_v}^{(m)})$ where for all $m\geq 0$
\[
Y_{i_u}= \int_{(i_u-1) \Delta}^{i_u \Delta}\sqrt{X_s} \,dW_s \,\,\,\textrm{and}\,\,\, Y_{j_1}^{(m)}= \int_{(j_1-1) \Delta}^{j_1 \Delta} \sqrt{X_s^{(m)}}\, dW_s.
\]
Then, if $(j_1-1)\Delta-m-i_u \Delta \geq 0$ that can also be expresses as $j_1-i_u \geq \frac{m}{\Delta}+1$, $I_u=S \times (-\infty,i_u \Delta]$ and $J_1=S \times [(j_1-1)\Delta-m,j_1 \Delta]$ are disjoint sets or have as intersection a set of measure zero when $i_u \Delta=(j_1-1)\Delta-m$, i.e. $\pi\times\lambda(S\times \{i_u \Delta\})=0$. Then, by the definition of a L\'evy basis, the integrands of $Y_{i_u}$ and $Y_{j_1}^{(m)}$ are independent and so are $Y_{i_u}$ and $Y_{j_1}^{(m)}$ because of the independence of $W$ and the L\'evy basis.
Then, the sequences $(Y_i)_{i \leq i_u}$ and $(Y_j^{(m)})_{j\geq j_1}$ are independent and so are $F(Y_i^{*})$ and $G(Y_j^{*(m)})$.
Therefore, let $m=(r-1) \Delta$
\[
|Cov(F(Y_i^*),G(Y_j^*))|\leq |Cov(F(Y_i^*), G(Y_j^{*})-G(Y_j^{*(m)}))|+ |Cov(F(Y_i^*),G(Y_j^{*(m)}))|
\]
\[
\leq 2 \,\E |G(Y_j^*)-G(Y_j^{*(m)})| 
\]
the last relation comes from $\|F\|_{\infty}\leq 1$
\[
\leq 2 \, Lip(G) \sum_{k=1}^v \E|Y_{j_k}-Y_{j_k}^{(m)}| 
\]
using (\ref{res_truncated}) 
\[
=2 v Lip(G) \sqrt{\Delta \,\theta_X((r-1)\Delta)},
\]
which converges to zero as $r$ goes to infinity by applying the dominated convergence theorem.
\end{proof}

Let us consider a supOU process $X$ defined as in (\ref{supou}) such that the underlying L\'evy process $L$ is a subordinator. It can be shown that the process is adapted and c\`adl\`ag under the assumptions $(ii)$ and $(iii)$ of \cite[Theorem 3.12]{BNS11}. Then, Assumptions $\textrm{(H)}$ are satisfied and we can define a supOU SV model and the resulting return process 
\begin{equation}
\label{supousv}
Y_t=\int_{(t-1) \Delta}^{t \Delta} \sqrt{\int_{\R^-} \int_{-\infty}^s e^{A(s-u)} \,\Lambda(dA,du)} dW_s.
\end{equation}
By applying Proposition \ref{theta_return}, $Y$ is $\theta$-weakly dependent with coefficients
\begin{equation}
\label{theta_ret}
\theta_Y(r)=\sqrt{ -\Delta \mu \int_{\R^{-}} \frac{\mathrm{e}^{A\Delta(r-1)}}{A} \, \pi(dA)},
\end{equation}
where $\mu$ is the mean of the underlying L\'evy process.
\\
\\
We consider a sample of $N$ observations of $Y$ and we define the following sample moments for the return process. 

The sample mean
\begin{equation}
\label{mean_ret}
 \frac{1}{N} \sum_{i=1}^N Y_{1+i}, 
\end{equation}
the sample autocovariance function for $k \in \N$
\begin{equation}
\label{timeseries2_ret}
\frac{1}{N}\sum_{j=1}^{N} Y_{1+j}Y_{1+j+k},
\end{equation}
and the fourth order (non-centered) sample moments for $k \in \N$
\begin{equation}
\label{timeseries3_ret}
\frac{1}{N}\sum_{j=1}^{N} Y_{1+j}^2Y_{1+j+k}^2.
\end{equation} 
When the asymptotic properties of the sample autocovariance functions are investigated,
we focus on the processes 
\begin{equation}
\label{cova_ret}
W_{j,k}=Y_{1+j}Y_{1+j+k}-T(k),
\end{equation} 
where we denote by $T(k)$ the covariances of order $k$ defined by 
\[
 T(k)=\E[Y_1Y_{1+k}], 
\]
whereas in the case of the fourth order sample moments on
\begin{equation}
\label{4_ret}
\tilde{W}_{j,k}= Y_{1+j}^2Y_{1+j+k}^2- D^*(k)-\E[V_1]^2,
\end{equation} 
where we denote by $D^*(k)$ the covariances of order $k$ defined by 
\[
 D^*(k)= Cov(V_1,V_{1+k})
\]
where $V$ is the integrated process defined in (\ref{integrated}).

Analogous to Proposition \ref{lem}, we show that the $\theta$-weak dependence of the return process is inherited by the process $Z_t=(Y_{t},Y_{t+1},\ldots,Y_{t+k})$ for all $k \in \N$.

\begin{Lemma}
\label{lem2}
Let $\Lambda$  be a L\'evy basis and $X$ an MMA process satisfying Assumptions $\textrm{(H)}$ and W be a standard Brownian motion independent of $\Lambda$.
We consider the process
\[
Z_t= \left(
 \begin{array}{l}
Y_t\\
\vdots\\
Y_{t+k}
\end{array}
\right)=  \left(
  \begin{array}{l}
\int_{(t-1)\Delta}^{(t)\Delta} \sqrt{X_s} \, dW_s\\
\,\,\,\,\,\,\,\,\vdots\\
\int_{(t+k-1)\Delta}^{(t+k)\Delta} \sqrt{X_s} \, dW_s
\end{array}
\right) = \int_{(t-1)\Delta}^{(t+k)\Delta} G_s \, dW_s
\]
where $Y_t$ is the return process defined in (\ref{ret}) and $G_s$ is an $\R^{k+1}\times \R$ valued process defined as
$G_s =\left(\begin{array}{l}
  \sqrt{X_s} \mathbb{1}_{((t-1)\Delta, t\Delta]}(s)\\
 \sqrt{X_s} \mathbb{1}_{(t\Delta, (t+1)\Delta]}(s)\\
 \,\,\,\,\,\,\,\,\,\,\vdots\,\,\,\,\,\,\,\,\,\,  \\
 \sqrt{X_s}\mathbb{1}_{((t+k-1)\Delta, (t+k)\Delta]}(s)
\end{array}\right)$. 
Then, $Z$ is $\theta$-weakly dependent with coefficients
\[
\theta_Z(r)= \mathcal{D}^{*} \sqrt{\Delta \theta_X((r-k-1) \Delta)}
\]
for $r \geq k+1$ being $\mathcal{D}^{*}= (k+1)$ and $\theta_X$ given in (\ref{coefficients2_theta}).
\end{Lemma}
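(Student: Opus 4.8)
The plan is to mimic the proof of Proposition \ref{theta_return}, replacing the scalar truncation argument by a component-wise one and keeping careful track of the fact that the vector $Z_t$ is built over the whole interval $[(t-1)\Delta,(t+k)\Delta]$, which is what will produce the shift by $k+1$ in the argument of $\theta_X$.

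First I would introduce the truncated vector process $Z_t^{(m)}$ whose $p$-th component ($p=0,\ldots,k$) is $Y_{t+p}^{(m)}=\int_{(t+p-1)\Delta}^{(t+p)\Delta}\sqrt{X_s^{(m)}}\,dW_s$, with $X_s^{(m)}$ the truncated volatility defined in (\ref{truncated_theta}). Equipping $\R^{k+1}$ with the norm $\|(x_0,\ldots,x_k)\|=\sum_{p=0}^k|x_p|$, the estimate (\ref{res_truncated}) obtained in the proof of Proposition \ref{theta_return} gives $\E|Y_{t+p}-Y_{t+p}^{(m)}|\le\sqrt{\Delta\,\theta_X(m)}$ for each $p$, and summing over the $k+1$ components yields the key bound $\E\|Z_t-Z_t^{(m)}\|\le(k+1)\sqrt{\Delta\,\theta_X(m)}$.

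The central point, and the one I expect to require the most care, is the independence argument. The past block $F(Z_{i_1},\ldots,Z_{i_u})$ is measurable with respect to the Brownian increments up to time $(i_u+k)\Delta$ and the values of $\Lambda$ on $S\times(-\infty,(i_u+k)\Delta]$, because the latest return entering it is $Y_{i_u+k}$, whose integrand $\sqrt{X_s}$ with $s\le(i_u+k)\Delta$ depends on $\Lambda$ only up to time $(i_u+k)\Delta$ by causality. The truncated future block $G(Z_{j_1}^{(m)},\ldots,Z_{j_v}^{(m)})$ depends on $\Lambda$ only on $S\times[(j_1-1)\Delta-m,\infty)$, since its earliest integrand $\sqrt{X_s^{(m)}}$ with $s\ge(j_1-1)\Delta$ reaches back only to $s-m\ge(j_1-1)\Delta-m$. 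Choosing $m=(r-k-1)\Delta$, which is admissible precisely because $r\ge k+1$, together with $j_1-i_u\ge r$ forces $(i_u+k)\Delta\le(j_1-1)\Delta-m$, so the two $\Lambda$-supports meet at most in a set of $(\pi\times\lambda)$-measure zero, while the corresponding Brownian increments do not overlap either. Using the independence of $W$ and $\Lambda$ together with the independent scattering of the L\'evy basis, $F(Z_i^*)$ and $G(Z_j^{*(m)})$ are therefore independent.

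Finally I would split the covariance exactly as in Proposition \ref{theta_return},
\[
|Cov(F(Z_i^*),G(Z_j^*))|\le|Cov(F(Z_i^*),G(Z_j^*)-G(Z_j^{*(m)}))|+|Cov(F(Z_i^*),G(Z_j^{*(m)}))|,
\]
where the second term vanishes by independence. Bounding the first term by $2\|F\|_\infty\,Lip(G)\sum_{l=1}^v\E\|Z_{j_l}-Z_{j_l}^{(m)}\|$ and inserting the estimate from the first step gives $2\,v\,Lip(G)\,(k+1)\sqrt{\Delta\,\theta_X((r-k-1)\Delta)}$, which matches the definition (\ref{def2}) of $\theta$-weak dependence with $\theta_Z(r)=\mathcal D^{*}\sqrt{\Delta\,\theta_X((r-k-1)\Delta)}$ and $\mathcal D^{*}=k+1$. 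Convergence of $\theta_Z(r)$ to zero as $r\to\infty$ then follows from that of $\theta_X$ by dominated convergence, as before.
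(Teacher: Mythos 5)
Your proposal is correct and follows essentially the same route as the paper: truncate the volatility, bound $\E\|Z_t-Z_t^{(m)}\|$ by $(k+1)\sqrt{\Delta\,\theta_X(m)}$, and run the covariance-splitting and independence argument of Proposition \ref{theta_return} with $m=(r-k-1)\Delta$. The only cosmetic difference is that you sum component-wise $\ell^1$ bounds where the paper applies the It\^o isometry to the vector integral and then the triangle inequality, and you spell out the independence step that the paper delegates to the earlier proofs.
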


\begin{proof}
For $m \geq 0$, let $Z_t^{(m)}$ be 
\[
Z_t^{(m)}=\int_{(t-1) \Delta}^{(t+k) \Delta} \sqrt{G_s^{(m)}} \, dW_s
\]
where $G_s^{(m)}=
\left(\begin{array}{l}
\sqrt{X_s^{(m)}} \mathbb{1}_{((t-1)\Delta, t\Delta]}(s)\\
\sqrt{X_s^{(m)}} \mathbb{1}_{(t\Delta, (t+1)\Delta]}(s)\\
 \,\,\,\,\,\,\,\,\,\,\vdots\,\,\,\,\,\,\,\,\,\,\\
\sqrt{X_s^{(m)}}\mathbb{1}_{((t+k-1)\Delta, (t+k)\Delta]}(s)
\end{array}\right)
$.
Then,
\begin{equation}
\label{res_truncated_multi}
\E[\|Z_t-Z_t^{(m)}\|]=\E\Big[\Big \| \int_{(t-1) \Delta}^{(t+k) \Delta} G_s-G_s^{(m)} \, dW_s \Big \| \Big] 
\end{equation}
\[
\leq \E\Big[ \int_{(t-1) \Delta}^{(t+k) \Delta} tr((G_s-G_s^{(m)})(G_s-G_s^{(m)})^{\prime}) \, ds \Big]^{\frac{1}{2}}
\]
by means of the triangular inequality
\[
\leq \E\Big[ \int_{(t-1) \Delta}^{t \Delta} (\sqrt{X_s}-\sqrt{X_s^{(m)}})^2 \, ds \Big]^{\frac{1}{2}}+\ldots+\E\Big[ \int_{(t+k-1) \Delta}^{(t+k) \Delta} (\sqrt{X_s}-\sqrt{X_s^{(m)}})^2 \, ds \Big]^{\frac{1}{2}}
\]
\[
\leq (k+1) \sqrt{\Delta \theta_X(m)}.
\]

Proceeding as in Proposition \ref{lem} and in Proposition \ref{theta_return} the claim follows.

\end{proof}

It can also be shown that the process $Z_t$ is mixing, and thus ergodic, proceeding as in the proof of \cite[Theorem 4.2]{FS13}.

The following asymptotic result holds for (\ref{mean_ret}).
\begin{Theorem}
\label{clt_mean_ret}
We assume that Assumptions $\textrm{(H)}$ hold and that $\int_{|x| >1} |x|^{1+\delta} \nu(dx) < \infty$, for some $\delta>0 $, and $f$ belongs to $L^{1+\delta}(S \times \R^+,\pi \otimes\lambda)\cap L^{1}(S \times \R^+, \pi \otimes \lambda)$.
If $(Y_i)_{i \in \R}$ as defined in (\ref{ret}) is a $\theta$-weakly dependent process such that the volatility process $X$ admits coefficients $\theta_X(r)= O(r^{-\alpha})$ with $\alpha>2\Big(1+\frac{1}{\delta}\Big)$, then $ \sigma^2_Y = Var(Y_1)$ is non-negative and as $N \to \infty$
\begin{equation}
\label{thesis_ret}
\frac{1}{\sqrt{N}}\sum_{i=1}^N Y_{1+i} \stackrel{d}{\rightarrow} \mathcal{N}(0,\sigma^2_Y).
\end{equation}
\end{Theorem}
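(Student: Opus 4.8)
The plan is to reduce the statement to the central limit theorem for $\theta$-weakly dependent sequences already used in the proof of Theorem \ref{clt_mean2}, namely \cite[Corollary 1]{DD03} combined with \cite[Theorem 1]{DR00}, but applied now to the return sequence $(Y_{1+i})_{i\in\Z}$ rather than to $X$. To do so I must supply four ingredients: that $Y$ is centered, that it possesses a finite moment of order $2+\delta$, that its $\theta$-coefficients decay fast enough, and that it is ergodic.

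First I would record that $\E[Y_t]=0$, since $Y_t$ is an It\^o integral of an adapted integrand, and that conditionally on the whole path of $X$ the variable $Y_t$ is a centred Wiener integral, whence $Y_t\mid X\sim\mathcal{N}(0,V_t)$ with $V_t=\int_{(t-1)\Delta}^{t\Delta}X_s\,ds$ the integrated volatility of (\ref{integrated}). Conditioning therefore gives
\[
\E[|Y_t|^{2+\delta}] = c_{2+\delta}\,\E\big[V_t^{1+\delta/2}\big],\qquad c_{2+\delta}:=\E[|\xi|^{2+\delta}],\ \xi\sim\mathcal{N}(0,1).
\]
By Jensen's inequality applied to the normalised measure $ds/\Delta$ on $((t-1)\Delta,t\Delta]$ together with stationarity of $X$ one gets $\E[V_t^{1+\delta/2}]\le\Delta^{1+\delta/2}\,\E[X_0^{1+\delta/2}]$, so it remains to check $\E[X_0^{1+\delta/2}]<\infty$. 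This follows from Corollary \ref{moment2}(i) with $r=1+\delta/2$: the hypothesis $\int_{|x|>1}|x|^{1+\delta}\nu(dx)<\infty$ yields $\int_{|x|>1}|x|^{1+\delta/2}\nu(dx)<\infty$ since $1+\delta/2\le 1+\delta$ on $\{|x|>1\}$, while $f\in L^{1+\delta/2}$ by interpolation between the assumed $L^1$ and $L^{1+\delta}$ memberships. Hence $\E[|Y_1|^{2+\delta}]<\infty$.

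Next I would invoke Proposition \ref{theta_return}, which gives the $\theta$-weak dependence of $Y$ with $\theta_Y(r)=\sqrt{\Delta\,\theta_X((r-1)\Delta)}$; substituting $\theta_X(r)=O(r^{-\alpha})$ yields $\theta_Y(r)=O(r^{-\alpha/2})$. The hypothesis $\alpha>2(1+\tfrac1\delta)$ is exactly $\tfrac{\alpha}{2}>1+\tfrac1\delta$, i.e. the decay rate of $\theta_Y$ together with the moment of order $2+\delta$ meets the same summability requirement used for $X$ in Theorem \ref{clt_mean2}; concretely, \cite[Lemma 2]{DD03} then ensures that the condition $D(2,\theta_Y/2,Y_1)$ holds. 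Ergodicity of $(Y_t)$ is obtained as indicated after Lemma \ref{lem2}, by mimicking the mixing argument of \cite[Theorem 4.2]{FS13}. With these ingredients \cite[Corollary 1]{DD03} and \cite[Theorem 1]{DR00} deliver the convergence (\ref{thesis_ret}).

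Finally, I would identify the limiting variance. Because $W$ has independent increments and is independent of $X$, for $k\neq 0$ the returns $Y_1$ and $Y_{1+k}$ are integrals of $W$ over disjoint intervals and are conditionally independent given $X$ with conditional mean zero, so $Cov(Y_1,Y_{1+k})=\E[\E[Y_1\mid X]\,\E[Y_{1+k}\mid X]]=0$. Consequently the long-run variance collapses to $\sum_{k\in\Z}Cov(Y_0,Y_k)=Var(Y_1)=\sigma_Y^2$, finite and non-negative, which matches the statement. I expect the moment step to be the main obstacle: one must pass from the $(2+\delta)$-moment of $Y$ to a moment of the volatility $X$ through the integrated process $V$, and then verify that the available integrability of $f$ and $\nu$ is enough to apply Corollary \ref{moment2} at the intermediate exponent $1+\delta/2$.
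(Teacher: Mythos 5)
Your proposal is correct and follows essentially the same route as the paper: moments of $Y$ via Corollary \ref{moment2} (through the conditional Gaussianity $Y_t\mid X\sim\mathcal{N}(0,V_t)$), $\theta$-weak dependence of the returns from Proposition \ref{theta_return}, ergodicity from \cite[Theorem 4.2]{FS13}, and then \cite[Lemma 2, Corollary 1]{DD03} together with \cite[Theorem 1]{DR00}, with the variance collapsing to $Var(Y_1)$ because $Cov(Y_1,Y_{1+k})=0$ for $k\neq 0$. The paper's own proof is only a brief sketch of these same steps; your verification of the $(2+\delta)$-moment of $Y$ at the intermediate exponent $1+\delta/2$ and the matching of $\alpha/2>1+1/\delta$ with the stated hypothesis correctly fills in the details it omits.
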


\begin{proof}
Corollary \ref{moment2} applies and the return process is ergodic because of its mixing properties shown in \cite[Theorem 4.2]{FS13}.
\cite[Theorem 1]{DR00} can be applied, analogously as in Theorem \ref{clt_mean2}, assuring the result (\ref{thesis_ret}) where the asymptotic variance of the
sample mean is given by the absolute summable series $\sum_{l\in Z} Cov(Y_1,Y_{1+l})=Var(Y_1)$.
\end{proof}

Applying Proposition \ref{lem2} and Proposition \ref{her3}, the following can be easily shown.

\begin{Proposition}
\label{theta2_ret}
We assume that Assumptions $\textrm{(H)}$ hold and that {\small $\int_{\|x\| >1} \|x\|^{2+\delta} \nu(dx) < \infty$}, for some $\delta>0 $, and $f$ belongs to $L^{2+\delta}(S \times \R^+,\pi \otimes \lambda) \cap L^{1}(S \times \R^+, \pi \otimes \lambda)$. If $(Y_i)_{i \in \Z}$ as defined in (\ref{ret}) is $\theta$-weakly dependent process with coefficients $\theta_Y$ as in (\ref{coefficients_return}), then, for all $k >0$  $\mathcal{Z}=(W_{j,0}, W_{j,1},\ldots, W_{j,k})_{j \in \Z}$ is $\theta$-weakly dependent with coefficients
\[
\theta_{\mathcal{Z}}(r)= \mathcal{C} (\mathcal{D}^* \sqrt{\Delta \theta_X((r-k-1)\Delta)})^{\frac{\delta}{1+\delta}},
\]
Moreover, if we assume that $\int_{\|x\| >1} \|x\|^{4+\delta} \nu(dx)< \infty$, for some $\delta>0 $, and $f$ belongs to $L^{4+\delta}(S \times \R,\pi \otimes \lambda) \cap L^{1}(S \times \R, \pi\otimes \lambda)$, then the process $\tilde{\mathcal{Z}}= (\tilde{W}_{j,0}, \tilde{W}_{j,1},\ldots, \tilde{W}_{j,k})_{j \in \Z}$ is $\theta$-weakly dependent with coefficients
\[
\theta_{\tilde{\mathcal{Z}}}(r)= \mathcal{C} (\mathcal{D}^* \sqrt{\Delta \theta_X((r-k-1)\Delta)})^{\frac{\delta}{3+\delta}}.
\]
The constant $\mathcal{C}$ is independent of $r$ and $\mathcal{D}^*=k+1$ in the above formulas.
\end{Proposition}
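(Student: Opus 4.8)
The plan is to obtain both statements in the same way the processes $Y_{j,k}$ were treated in the MMA case, namely by combining the vectorisation of Lemma \ref{lem2} with the polynomial hereditary property of Proposition \ref{her3}. Throughout I would work with the vector return process $Z_t=(Y_t,Y_{t+1},\dots,Y_{t+k})$, which by Lemma \ref{lem2} is $\theta$-weakly dependent with
\[
\theta_Z(r)=\mathcal{D}^*\sqrt{\Delta\,\theta_X((r-k-1)\Delta)},\qquad \mathcal{D}^*=k+1,
\]
for $r\geq k+1$, with $\theta_X$ as in (\ref{coefficients2_theta}). The two conclusions then follow by pushing $Z$ through two different polynomial maps and reading off the exponent $\tfrac{p-a}{p-1}$ supplied by Proposition \ref{her3}.

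First I would verify the moment bounds on the return process that Proposition \ref{her3} requires. Conditionally on the volatility path, $Y_t$ is centred Gaussian with variance $V_t=\int_{(t-1)\Delta}^{t\Delta}X_s\,ds$, so $\E[|Y_t|^{q}]=c_q\,\E[V_t^{q/2}]$ for every $q>0$, where $c_q=\E[|N|^q]$ for a standard normal $N$; Minkowski's integral inequality and stationarity give $\E[V_t^{q/2}]\leq \Delta^{q/2}\,\E[X_0^{q/2}]$ for $q\geq2$. Under the first set of assumptions, Corollary \ref{moment2} yields $\E[X_0^{1+\delta/2}]<\infty$ (note $1+\tfrac{\delta}{2}\leq 2+\delta$, so $f\in L^{1+\delta/2}$ and $\int_{\|x\|>1}\|x\|^{1+\delta/2}\nu(dx)<\infty$ hold by interpolation), hence $\E[|Y_1|^{2+\delta}]<\infty$; under the stronger set it likewise gives $\E[X_0^{2+\delta/2}]<\infty$, hence $\E[|Y_1|^{4+\delta}]<\infty$. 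Thus $Z$ has a finite $p$-moment with $p=2+\delta$ in the first case and $p=4+\delta$ in the second.

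For the first claim I would introduce the map $h^{(1)}\colon\R^{k+1}\to\R^{k+1}$, $h^{(1)}(y_0,\dots,y_k)=(y_0^2,\,y_0y_1,\dots,y_0y_k)$. This is a polynomial map of degree $a=2$ with $h^{(1)}(0)=0$, so, exactly as in the proof of Corollary \ref{hereditary_lemma}, it satisfies the growth condition (\ref{hyp_her2}). Applying Proposition \ref{her3} to $Z$ with $p=2+\delta$ and $a=2$, the exponent is $\tfrac{p-a}{p-1}=\tfrac{\delta}{1+\delta}$, and $h^{(1)}(Z_{1+j})=(Y_{1+j}^2,Y_{1+j}Y_{1+j+1},\dots,Y_{1+j}Y_{1+j+k})$ is $\theta$-weakly dependent with coefficients $\mathcal{C}\big(\mathcal{D}^*\sqrt{\Delta\,\theta_X((r-k-1)\Delta)}\big)^{\delta/(1+\delta)}$. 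Since $\mathcal{Z}_j$ differs from $h^{(1)}(Z_{1+j})$ only by the constant vector $(T(0),\dots,T(k))$, and a constant shift leaves every covariance — hence every $\theta$-coefficient — unchanged, $\mathcal{Z}$ inherits exactly $\theta_{\mathcal{Z}}(r)=\mathcal{C}\big(\mathcal{D}^*\sqrt{\Delta\,\theta_X((r-k-1)\Delta)}\big)^{\delta/(1+\delta)}$. The second claim is identical with the degree $a=4$ map $h^{(2)}(y_0,\dots,y_k)=(y_0^4,\,y_0^2y_1^2,\dots,y_0^2y_k^2)$, which again has $h^{(2)}(0)=0$ and satisfies (\ref{hyp_her2}); Proposition \ref{her3} with $p=4+\delta$, $a=4$ gives exponent $\tfrac{\delta}{3+\delta}$, and subtracting the constants $D^*(p)+\E[V_1]^2$ to form $\tilde{\mathcal{Z}}$ does not alter the coefficients.

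The only genuinely delicate point is the second paragraph: the reduction of the integrability hypotheses on $\nu$ and $f$ to the required $(2+\delta)$- and $(4+\delta)$-moments of the return process, via the conditional Gaussianity of $Y_t$ and the passage through the integrated volatility $V_t$. Everything else is bookkeeping — the multivariate version of the hereditary lemma is already available as Proposition \ref{her3}, the polynomial maps satisfy the growth bound by the same computation as in Corollary \ref{hereditary_lemma}, and translation invariance disposes of the centring. Because $\mathcal{D}^*$ and $\mathcal{C}$ are absorbed into a single constant independent of $r$, it suffices to track the exponent $\tfrac{p-a}{p-1}$, which is precisely what matches $\tfrac{\delta}{1+\delta}$ and $\tfrac{\delta}{3+\delta}$ in the two cases.
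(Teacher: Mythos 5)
Your proposal is correct and follows exactly the route the paper intends: the paper states that the result follows by "applying Proposition \ref{lem2} and Proposition \ref{her3}", and you combine the vectorised return process from Lemma \ref{lem2} with the polynomial hereditary property of Proposition \ref{her3} (degree $a=2$ with $p=2+\delta$, respectively $a=4$ with $p=4+\delta$), which is precisely the intended argument. Your explicit verification of the moment bounds $\E[|Y_1|^{2+\delta}]<\infty$ and $\E[|Y_1|^{4+\delta}]<\infty$ via conditional Gaussianity and Corollary \ref{moment2} is a detail the paper leaves implicit, and it is carried out correctly.
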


\begin{Corollary}
\label{clt_multi_acf_ret}
We assume that Assumptions $\textrm{(H)}$ hold and that {\small$\int_{\|x\| >1} \|x\|^{2+\delta} \nu(dx) < \infty$}, for some $\delta>0 $, and $f$ belongs to 
$L^{2+\delta}(S \times \R^+, \pi \otimes \lambda)\cap L^{1}(S \times \R^+, \pi \otimes \lambda)$.
If $(\mathcal{Z}_i)_{i \in \Z}$ as defined in Proposition \ref{theta2_ret} is $\theta$-weakly dependent such that the volatility process $X$ admits coefficients $\theta_X(r)= O(r^{-\alpha})$ 
with $\alpha>(1+\frac{1}{\delta})(\frac{2+2\delta}{\delta})$,  then for each $p,q \in \{0,\ldots,k\}$ with $k \in \N$,
\[
\sum_{l \in \N} Cov(W_{0,p}, W_{l,q}) 
\]
are finite and as $N \to \infty$
\[
\frac{1}{\sqrt{N}}\sum_{j=1}^{N} \mathcal{Z}_j \stackrel{d}{\rightarrow} \mathcal{N}_{k+1}(0,\Psi)
\]
where $\Psi$ is equal to
\begin{equation*}
\footnotesize
\left[\begin{array}{cccc}
\sum_{l \in \Z} Cov(Y_1^2,Y_{l+1}^2)& \sum_{l\in \Z} Cov(Y_1^2,Y_{l+1}Y_{l+2})& \ldots & \sum_{l \in \Z} Cov(Y_1^2,Y_{l+1}Y_{l+k+1})\\
\ldots &\sum_{l \in \Z} Cov(Y_{1}Y_{2},Y_{l+1} Y_{l+2}) & \ldots & \sum_{l \in \Z} Cov(Y_{1}Y_{2},Y_{l+1}Y_{l+k+1})\\
\ldots& \ldots &\ldots & \sum_{l \in \Z} Cov(Y_{1}Y_{k+1},Y_{l+1} Y_{l+k+1})
\end{array}\right].
\end{equation*}
and positive semidefinite.
\end{Corollary}

\begin{proof}
Since Corollary \ref{moment2} holds, $Z$ is a $\theta$-weakly dependent process with coefficients given in Proposition \ref{theta2_ret}$, \theta_Z(r)= \mathcal{C}(\mathcal{D}^*\sqrt{\Delta\theta_X((r-k-1)\Delta)})^{\frac{\delta}{1+\delta}}$. 
For all $a \in \R^{k+1}$, $a^{\prime}Z$ is a $\theta$-weakly dependent process, because a linear function is Lipschitz, and ergodic having the same $\theta$-coefficients as the process $\mathcal{Z}$.
Under the assumptions of the Corollary, \cite[Theorem 1]{DR00} is then applied  and
\[
\frac{1}{\sqrt{N}} \sum_{j=1}^N a^{\prime}\mathcal{Z}_j \xrightarrow{d} \mathcal{N}(0, a^{\prime} \Psi a) 
\]
as $N \to \infty$.
Applying the Cramer-Wold device, the asymptotic normality of the vector $\mathcal{Z}$ is shown.
\end{proof}

\begin{Corollary}
\label{clt_multi_4_ret}
We assume that Assumptions $(H)$ hold and that {\small$\int_{\|x\| >1} \|x\|^{4+\delta} \nu(dx) < \infty$}, for some $\delta>0 $, and $f$ belongs to $L^{4+\delta}(S \times \R^+, \pi \otimes \lambda)\cap L^{1}(S \times \R^+, \pi \otimes \lambda)$.
If $(\tilde{\mathcal{Z}}_i)_{i \in \Z}$ as defined in Proposition \ref{theta2_ret} is $\theta$-weakly dependent such that the volatility process $X$ admits coefficients $\theta_X(r)= O(r^{-\alpha})$ 
with $\alpha>(1+\frac{1}{\delta})(\frac{6+2\delta}{\delta})$, then  for each $p,q \in \{0,\ldots,k\}$ with $k \in \N$,
\[
\sum_{l \in \N} Cov(\tilde{W}_{0,p}, \tilde{W}_{l,q}) 
\]
are finite and as $N \to \infty$
\[
\frac{1}{\sqrt{N}}\sum_{j=1}^{N} \tilde{\mathcal{Z}}_j \stackrel{d}{\rightarrow} \mathcal{N}_{k+1}(0,\mathcal{M})
\]
where $\mathcal{M}$ is equal to 
\begin{equation*}
\footnotesize
\left[\begin{array}{cccc}
\sum_{l \in \Z} Cov(Y_1^4,Y_{l+1}^4)& \sum_{l\in \Z} Cov(Y_1^4,Y_{l+1}^2Y_{l+2}^2)& \ldots & \sum_{l \in \Z} Cov(Y_1^4,Y_{l+1}^2Y_{l+k+1}^2)\\
\ldots &\sum_{l \in \Z} Cov(Y_{1}^2Y_{2}^2,Y_{l+1}^2 Y_{l+2}^2) & \ldots & \sum_{l \in \Z} Cov(Y_{1}^2Y_{2}^2,Y_{l+1}^2Y_{l+k+1}^2)\\
\ldots& \ldots &\ldots & \sum_{l \in \Z} Cov(Y_{1}^2Y_{k+1}^2,Y_{l+1}^2 Y_{l+k+1}^2)
\end{array}\right].
\end{equation*}
is positive semidefinite.
\end{Corollary}

\begin{proof}
The proof follows as in Corollary \ref{clt_multi_acf_ret}, using the $\theta$ coefficients of the process $\tilde{\mathcal{Z}}$ as determined in Proposition \ref{theta2_ret}.
\end{proof}

\begin{Remark}
\label{cum}
In view of Section \ref{sec5}, let us give explicit formulas of the third and fourth order cumulant of an integrated process $V$ and of the covariances $Cov(\tilde{W}_{0,p},\tilde{W}_{l,q})$ for $p,q \in \{0,\ldots,k\}$ and $k \in \N$ under the assumptions of Corollary \ref{clt_multi_4_ret}.
Let us consider an integrated process as defined in (\ref{integrated}) with mean $\E[V_1]=C^*$.
For all $(i,j,k,l) \in \R^4$, we call $K(i,j,k)$ and $K(i,j,k,l)$ the centered cumulant or order three and four which are respectively equal to
\begin{equation}
\label{cumulant3_int}
K(i,j,k)=\int_{i \Delta}^{(i+1)\Delta} \int_{j \Delta}^{(j+1) \Delta} \int_{k \Delta}^{(k+1) \Delta} \kappa(s,t,u) \, ds\,dt \, du,
\end{equation}
with $\kappa(s,t,u)$ given in (\ref{cumulant3}), and
\begin{equation}
\label{cumulant_int}
K(i,j,k,l)=\int_{i \Delta}^{(i+1)\Delta} \int_{j \Delta}^{(j+1) \Delta} \int_{k \Delta}^{(k+1) \Delta} \int_{l \Delta}^{(l+1)\Delta} \kappa(s,t,u,z) \, ds\,dt \, du \, dz,
\end{equation}
where $\kappa(s,t,u,z)$ is defined in (\ref{cumulant}).
Moreover,
\begin{equation}
D^*(l)=Cov(V_1,V_{1+l})=\int_0^{\Delta} \int_{l \Delta}^{(l+1) \Delta} D(u-s) ds du,
\end{equation}
where $D(\cdot)$ is the covariance function, as in (\ref{passo}), of the centered MMA process underlying the definition of the integrated process.Hence, by means of the It\^o formula, the independence of the process $(X_t)_{t \in \R}$ with $(W_t)_{t \in \R}$ and using arguments similar to the formula $(40)$ in \cite{PS09} 
\[
Cov(\tilde{W}_{0,p},\tilde{W}_{l,q})
\]
\[
= K(0,p,l,l+q)+ C^*(K(0,p,l)+K(0,p,l+q)+K(p,l,l+q)+K(0,l,l+q))
\]
\[
 +C^{*2}(D^*(l-p)+D^*(l+q-p)+D^*(l)+D^*(l+q))+D^*(l)D^*(l+q-p)
\]
\[
+D^*(l+q)D^*(l-p)+A(0,p,l,l+q),
\]
where $A(i,j,k,l)$ is defined in Table \ref{tab4mom} for $(i,j,k,l) \in \R^4$.

\begin{table}
\footnotesize
\centering
\begin{tabular}{|c|c|}
\hline
$(i,j,k,l)$ & $A(i,j,k,l)$\\
\hline
$\{l \neq i \neq j \neq k \}$ & $0$\\
\hline
 $\{i\neq j\} \wedge \{j=k=l\}$ & $ 12 \int_{i\Delta}^{(i+1)\Delta} \int_{j \Delta}^{(j+1)\Delta} \int_{j \Delta}^{(j+1)\Delta} \int_{j \Delta}^s \E[X_t X_z X_s X_u] du ds dz dt $\\
\hline
$\{i \neq j \neq k\}\wedge \{ k=l \}$ & $ 4 \int_{i\Delta}^{(i+1)\Delta} \int_{j \Delta}^{(j+1)\Delta} \int_{k \Delta}^{(k+1)\Delta} \int_{k \Delta}^s \E[X_t X_z X_s X_u] du ds dz dt $\\
\hline
$\{i=j\} \wedge \{ k=l\} \wedge \{i\neq k \}$ & $ 4 \int_{i\Delta}^{(i+1)\Delta} \int_{i \Delta}^{(i+1)\Delta} \int_{k \Delta}^{(k+1)\Delta} \int_{k\Delta}^s \E[X_t X_z X_s X_u] du ds dz dt$\\
&$+4 \int_{i\Delta}^{(i+1)\Delta} \int_{i \Delta}^{t} \int_{k \Delta}^{(k+1)\Delta} \int_{k \Delta}^{(k+1)\Delta} \E[X_t X_z X_s X_u] du ds dz dt$\\
 & $ + 16 \int_{i \Delta}^{(i+1)\Delta} \int_{k \Delta}^{(k+1)\Delta} \int_{i \Delta}^t \int_{k \Delta}^{s} \E[X_t X_s X_z X_u] du ds dz dt$ \\
\hline 
$\{i=j=k=l\}$ & $ \E\Big[24 \Big(\int_{i\Delta}^{(i+1) \Delta}  X_t  \, dt \Big)^2 \int_{i\Delta}^{(i+1)\Delta} X_s \int_{i \Delta}^s X_u du ds \Big]$\\
 & $+ \E\Big[ 96 \int_{i \Delta}^{(i+1) \Delta} \int_{i \Delta}^{s} X_u \int_{i \Delta}^u X_z \,dz du ds \Big] $\\
\hline
\end{tabular}
\caption{Explicit closed formula for the summand $A(i,j,k,l)$ for $(i,j,k,l) \in \Z^4$.}
\label{tab4mom}
\end{table}
\end{Remark}

\section{Generalized method of moments for the supOU process and supOU SV model}
\label{sec5}

In this section, we apply the developed asymptotic theory to determine the asymptotic normality of GMM estimators
of the supOU process and of the supOU SV model defined in \cite{STW15}.

Let $X$ and $Y$ be a supOU process and a return process as respectively defined in (\ref{supou}) and  (\ref{supousv}).
We assume, $$\int_{|x| > 1} x^2 \nu(dx) < \infty,$$ then, as shown in \cite[Theorem 2.3 and Theorem 2.8]{STW15}, the supOU process $X$ and the return process $Y$ have known moments given by
\begin{align*}
\E[X_0]=& - \mu \int_{\R^{-}} \frac{1}{A} \, \pi(dA), \,\,\, \mathrm{Var}[X_0]=-\sigma^2\int_{\R^{-}} \frac{1}{2A} \, \pi(dA), \\ &\mathrm{Cov}[X_0,X_{k\Delta}]=-\sigma^2 \int_{\R^{-}} \frac{\mathrm{e}^{Ak\Delta}}{2A} \, \pi(dA),
\end{align*}
and 
\begin{align*}
&\E[Y_1]=0,\,\,\, Var[Y_1]=-\Delta \mu \int_{\R^-} \frac{1}{A} \, \pi(dA),\,\,\,Cov(Y_1,Y_{1+k})=0, \\
&\E[Y_1^2]=-\Delta \mu \int_{\R^-} \frac{1}{A}\, \pi(dA), \\
 &Var(Y_1^2)=-3 \sigma^2 \int_{\R^-} \frac{1}{A^2}\Big(\frac{e^{A\Delta}}{A}-\frac{1}{A}-\Delta\Big)\,\pi(dA) +2 \Big( - \Delta \mu \int_{\R^-} \frac{1}{A} \, \pi(dA) \Big)^2, \\
&Cov(Y_1^2,Y_{1+k}^2)=-\sigma^2 \int_{\R^-}\frac{1}{2A^3} (s_{k+1}-2s_k+s_{k-1}) \, \pi(dA),
\end{align*}
where $\mu= \E[L_1]$ and $\sigma^2= Var[L_1]$ are the mean and variance of the underlying L\'evy process and $s_k:= e^{A \Delta k}$.

\begin{Assumption}
\label{assumption0}
Let us assume that the mean reversion parameter $A$ is Gamma distributed. That is, we assume that $\pi$ is the distribution of $B\xi$ where $B \in \R^{-}$ and $\xi$ is $\Gamma(\alpha_{\pi},1)$ distributed with $\alpha_{\pi}>2$. 
\end{Assumption}

We emphasize that setting the second parameter of the Gamma distribution equal to one does not restrict the model since this is equivalent to varying $B$. Under Assumption \ref{assumption0}, we observe the decay of the autocovariances of the supOU process as given in Remark \ref{decay}, notice that in this set-up $\alpha=\alpha_{\pi}-1$.

The moments of the supOU process $X$ and of the return process $Y$, under Assumption \ref{assumption0}, have been computed in \cite[Section 2.2]{STW15} 
\begin{align}
\label{moments1}
\begin{split}
\E[X_0]=& - \frac{\mu}{B (\alpha_{\pi}-1)}, \,\,\, \mathrm{Var}[X_0]=-\frac{\sigma^2}{2B (\alpha_{\pi}-1)}, \\ &\mathrm{Cov}[X_0,X_{k\Delta}]=-\frac{\sigma^2(1-B  k\Delta)^{1-\alpha_{\pi}}}{2 B  (\alpha_{\pi}-1)},
\end{split}
\end{align}
\begin{align}
\label{moments2}
\begin{split}
&\E[Y_1]=0 \,\,\, Var(Y_1)=-\frac{\Delta \mu}{B (\alpha_{\pi}-1)} , \,\,\, Cov(Y_1,Y_{1+k})=0, \\
&\E[Y_1^2]=-\frac{\Delta \mu}{B (\alpha_{\pi}-1)},\\
& Var(Y_1^2)=-3 \sigma^2 \frac{(1-B \Delta)^{3-\alpha_{\pi}}-1-\Delta B(\alpha_{\pi}-3)}{B^3(\alpha_{\pi}-1)(\alpha_{\pi}-2)(\alpha_{\pi}-3)} +2 \Big(-\frac{\Delta \mu}{B (\alpha_{\pi}-1)} \Big)^2, \\
&Cov(Y_1^2,Y_{1+k}^2)=-\frac{\sigma^2 (f_{k+1}-2f_k+f_{k-1})}{2B^3 (\alpha_{\pi}-1)(\alpha_{\pi}-2)(\alpha_{\pi}-3)},
\end{split}
\end{align}
where $f_k:= (1-B\Delta k)^{3-\alpha_{\pi}}$.
Therefore, the moment structure, up to the 2nd order for the supOU process and up to the 4th order for the return process, depends only on the parameter vector $\theta:=(\mu,\sigma^2,\alpha_{\pi},B)$.

\subsection{SupOU process}
Suppose we observe a sample $\{X_t: t=1\Delta,\ldots,N\Delta \}$ for the supOU process with $\Delta$ a positive constant. We construct the following moment functions, as in \cite{STW15}, by using the auto-covariances up to a lag $m \geq 2$ of $X$.

First, let us define $X_t^{(m)}=(X_{t\Delta}, X_{(t+1)\Delta},\ldots,X_{(t+m)\Delta})$ for $t=1,\ldots,N-m$ and the measurable function
$h: \R^{m+1} \times \Theta \to \R^{m+2}$ as
\begin{equation*}
h(X_t,\theta)=\left(\begin{array}{l}
h_{\E}(X_t^{(m)},\theta)\\
h_{0}(X_t^{(m)},\theta)\\
\,\,\,\,\,\,\,\,\vdots\\
h_k(X_t^{(m)},\theta)\\
\,\,\,\,\,\,\,\,\vdots\\
h_m(X_t^{(m)},\theta)
\end{array}\right)
\end{equation*}
\begin{equation}
\label{mom_func1}
=\left(\begin{array}{l}
X_{t\Delta}+\frac{\mu}{B(\alpha_{\pi}-1)}\\
X_{t\Delta}^2-\Big(\frac{\mu}{B(\alpha_{\pi}-1)}\Big)^2+\frac{\sigma^2}{2B(\alpha_{\pi}-1)}\\
X_{t\Delta}X_{(t+1)\Delta}-\Big(\frac{\mu}{B(\alpha_{\pi}-1)}\Big)^2+\frac{\sigma^2 (1-B\Delta)^{1-\alpha_{\pi}}}{2B(\alpha_{\pi}-1)}\\
\,\,\,\,\,\,\,\,\vdots\\
X_{t\Delta}X_{(t+k)\Delta}-\Big(\frac{\mu}{B(\alpha_{\pi}-1)}\Big)^2+ \frac{\sigma^2 (1-Bk\Delta)^{1-\alpha_{\pi}}}{2B(\alpha_{\pi}-1)}\\
\,\,\,\,\,\,\,\,\vdots\\
X_{t\Delta}X_{(t+m)\Delta}-\Big(\frac{\mu}{B(\alpha_{\pi}-1)}\Big)^2+ \frac{\sigma^2 (1-Bm \Delta)^{1-\alpha_{\pi}}}{2B(\alpha_{\pi}-1)}
\end{array}\right).
\end{equation}

We can now define the sample moment function for the supOU process as

\begin{equation}
g_{N,m}(X,\theta)=\left(\begin{array}{l}
\frac{1}{N-m} \sum_{t=1}^{N-m} h_{\E}(X_t^{(m)},\theta)\\
\frac{1}{N-m} \sum_{t=1}^{N-m} h_{0}(X_t^{(m)},\theta)\\
\,\,\,\,\,\,\,\,\vdots\\
\frac{1}{N-m} \sum_{t=1}^{N-m} h_k(X_t^{(m)},\theta)\\
\,\,\,\,\,\,\,\,\vdots\\
\frac{1}{N-m} \sum_{t=1}^{N-m} h_m(X_t^{(m)},\theta)
\end{array}\right)
\end{equation}
\[
=\left(\begin{array}{l}
\frac{1}{N-m} \sum_{t=1}^{N-m} \Big(X_{t\Delta}+\frac{\mu}{B(\alpha_{\pi}-1)}\Big) \\
\frac{1}{N-m} \sum_{t=1}^{N-m} \Big(X_{t\Delta}^2-\Big(\frac{\mu}{B(\alpha_{\pi}-1)}\Big)^2+\frac{\sigma^2}{2B(\alpha_{\pi}-1)}\Big)\\
\frac{1}{N-m} \sum_{t=1}^{N-m} \Big(X_{t\Delta}X_{(t+1)\Delta}-\Big(\frac{\mu}{B(\alpha_{\pi}-1)}\Big)^2+\frac{\sigma^2 (1-B\Delta)^{1-\alpha_{\pi}}}{2B(\alpha_{\pi}-1)}\Big)\\
\,\,\,\,\,\,\,\,\vdots\\
\frac{1}{N-m} \sum_{t=1}^{N-m} \Big(X_{t\Delta}X_{(t+k)\Delta}-\Big(\frac{\mu}{B(\alpha_{\pi}-1)}\Big)^2+ \frac{\sigma^2 (1-Bk\Delta)^{1-\alpha_{\pi}}}{2B(\alpha_{\pi}-1)}
 \Big)\\
\,\,\,\,\,\,\,\,\vdots\\
\frac{1}{N-m} \sum_{t=1}^{N-m} \Big(X_{t\Delta}X_{(t+m)\Delta}-\Big(\frac{\mu}{B(\alpha_{\pi}-1)}\Big)^2+ \frac{\sigma^2 (1-Bm\Delta)^{1-\alpha_{\pi}}}{2B(\alpha_{\pi}-1)} \Big)
\end{array}\right),
\]
and estimate $\theta_0$ by minimizing the objective function
\begin{equation}
\label{estimator}
\hat{\theta}_{0}^{N,m}= \mathrm{argmin} \, g_{N,m}(X,\theta)^{\prime} A_{N,m} g_{N,m}(X,\theta)
\end{equation}
where $A_{N,m}$ is a positive definite matrix to weight the $m+2$ different moments collected in 
$g_{N,m}(X,\theta)$.

We aim to show the asymptotic normality of the GMM estimator (\ref{estimator}).
Hence, we first need to show that the moment function $h(X_t,\theta_0)$ satisfies a central limit theorem.
\begin{Theorem}
\label{asy_mom1}
Let $\Lambda$ be a real valued L\'evy basis with generating quadruple $(\gamma,\Sigma,\nu,\pi)$ satisfying assumptions (\ref{supou_ass}) such that $\int_{|x|>1} |x|^{4+\delta} \, \nu(dx) < \infty$ for some $\delta >0$ and let Assumption \ref{assumption0} hold with $\alpha_{\pi}-1 > (1+\frac{1}{\delta})(\frac{6+2\delta}{2+\delta})$. Let $X$ be the resulting supOU process, then $h(X_t,\theta_0)$ is a $\theta$-weakly dependent process, 
\begin{equation}
\label{cov_h}
H_{\Sigma}=\sum_{l\in \Z} Cov(h(X_0,\theta_0),h(X_l,\theta_0))
\end{equation}
is finite, positive semidefinite and as $N \to \infty$
\begin{equation}
\label{asy_h}
\sqrt{N}g_{N,m}(X,\theta_0) \xrightarrow{d} \mathcal{N}(0, H_{\Sigma}).
\end{equation}
\end{Theorem}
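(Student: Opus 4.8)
The plan is to recognise $h(X_t^{(m)},\theta_0)$ as a centered second-degree polynomial transform of the lagged vector process and then to reproduce, in multivariate form, the argument used for Corollary \ref{clt_multi_acf}. Using the moment formulas (\ref{moments1}), one checks that the entries of $h(X_t^{(m)},\theta_0)$ are $X_{t\Delta}-\E[X_0]$, $X_{t\Delta}^2-\E[X_0^2]$ and $X_{t\Delta}X_{(t+k)\Delta}-\E[X_0X_{k\Delta}]$ for $k=1,\ldots,m$; in particular $\E[h(X_t^{(m)},\theta_0)]=0$, and every entry is a polynomial of degree at most two in $Z_t:=(X_{t\Delta},X_{(t+1)\Delta},\ldots,X_{(t+m)\Delta})$.

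First I would record the weak-dependence rate of the underlying supOU process. Being causal, $X$ is $\theta$-weakly dependent by Proposition \ref{tre_theta} (resp. Corollary \ref{tre_theta_non} when the mean is non-zero), with coefficients (\ref{theta_sup})--(\ref{theta_sup_1}). Under Assumption \ref{assumption0} the density of $\pi$ is regularly varying at $0$ with index $\alpha_\pi-1$, so Remark \ref{decay} gives $Cov(X_0,X_r)\sim C\,r^{-(\alpha_\pi-1)}$ and hence $\theta_X(r)=O(r^{-(\alpha_\pi-1)/2})$. The moment hypothesis $\int_{|x|>1}|x|^{4+\delta}\nu(dx)<\infty$ together with $f(A,s)=\mathrm e^{As}\mathbb{1}_{[0,\infty)}(s)\in L^{4+\delta}(S\times\R,\pi\otimes\lambda)$ (which holds because $\int_{\R^-}-A^{-1}\pi(dA)<\infty$ for a Gamma $\pi$ with $\alpha_\pi>2$) yields $\E[X_0^{4+\delta}]<\infty$ via Proposition \ref{moment1}.

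Next I would transfer the weak dependence to $h$. By Proposition \ref{lem} the vector $Z_t$ is itself a $\theta$-weakly dependent MMA process with $\theta_Z(r)=\mathcal D\,\theta_X(r-m\Delta)$. After subtracting the constant $h(0,\theta_0)$ (which changes none of the covariances), the map $z\mapsto h(z,\theta_0)-h(0,\theta_0)$ satisfies the growth condition (\ref{hyp_her2}) with $p=4+\delta$ and $a=2$, so Proposition \ref{her3} applies and shows that $h(Z_t,\theta_0)$ is $\theta$-weakly dependent with $\theta(r)=\mathcal C\,\theta_X(r-m\Delta)^{\frac{2+\delta}{3+\delta}}=O\big(r^{-\frac{\alpha_\pi-1}{2}\cdot\frac{2+\delta}{3+\delta}}\big)$. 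Crucially, the hypothesis $\alpha_\pi-1>(1+\tfrac1\delta)\big(\tfrac{6+2\delta}{2+\delta}\big)$ is exactly the statement $\tfrac{\alpha_\pi-1}{2}>(1+\tfrac1\delta)\big(\tfrac{3+\delta}{2+\delta}\big)$, i.e. the decay rate required in Corollary \ref{clt_multi_acf} once $\tfrac{\alpha_\pi-1}{2}$ is read as the rate $\alpha$ of $\theta_X$.

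Finally I would invoke the Cramér--Wold device. For each $a\in\R^{m+2}$ the scalar process $a^{\prime} h(X_t^{(m)},\theta_0)$ is $\theta$-weakly dependent with coefficients of the same order (a linear map is Lipschitz), has a finite moment of order $2+\tfrac\delta2$, and is ergodic, being a measurable functional of the mixing---hence ergodic---supOU process \cite{FS13}. As in the proof of Corollary \ref{clt_multi_acf}, \cite[Theorem 1]{DR00} then gives $\frac1{\sqrt N}\sum_{t=1}^{N}a^{\prime} h(X_t^{(m)},\theta_0)\xrightarrow{d}\mathcal N(0,a^{\prime} H_\Sigma a)$, where $a^{\prime} H_\Sigma a=\sum_{l\in\Z}Cov(a^{\prime} h(X_0,\theta_0),a^{\prime} h(X_l,\theta_0))$ is finite and non-negative by the decay rate just established; hence (by polarization) all entries of $H_\Sigma$ are finite and $H_\Sigma$ is positive semidefinite. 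Cramér--Wold upgrades this to joint convergence, and since $\sqrt N\,g_{N,m}(X,\theta_0)=\frac{\sqrt N}{N-m}\sum_{t=1}^{N-m}h(X_t^{(m)},\theta_0)$ agrees with $\frac1{\sqrt{N-m}}\sum_{t=1}^{N-m}h(X_t^{(m)},\theta_0)$ up to the factor $\sqrt N/\sqrt{N-m}\to1$, Slutsky's lemma yields (\ref{asy_h}). I expect the main obstacle to be purely the bookkeeping of rates: tracking $\theta_X(r)=O(r^{-(\alpha_\pi-1)/2})$ through the dimension-raising step (Proposition \ref{lem}) and the quadratic hereditary step (Proposition \ref{her3}), and verifying that the resulting exponent meets the CLT threshold precisely when $\alpha_\pi-1>(1+\tfrac1\delta)\big(\tfrac{6+2\delta}{2+\delta}\big)$; the recentering needed to apply Proposition \ref{her3}, whose normalisation $h(0)=0$ fails here, is a minor but necessary point.
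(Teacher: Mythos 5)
Your proposal is correct and follows essentially the same route as the paper's proof: represent $h(X_t,\theta_0)$ as a degree-two polynomial of the lagged vector $Z_t$, lift the $\theta$-weak dependence via Proposition \ref{lem}, apply the hereditary Proposition \ref{her3} with $p=4+\delta$, $a=2$ to get coefficients of order $\theta_X(r-m\Delta)^{(2+\delta)/(3+\delta)}$, and conclude with \cite[Theorem 1]{DR00} plus Cram\'er--Wold, the hypothesis on $\alpha_\pi$ being exactly $\tfrac{\alpha_\pi-1}{2}>(1+\tfrac1\delta)\tfrac{3+\delta}{2+\delta}$. Your extra remarks on recentering for Proposition \ref{her3} and the $\sqrt{N}/\sqrt{N-m}$ Slutsky step are details the paper leaves implicit but do not change the argument.
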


\begin{proof}
Proposition \ref{moment1} shows that the $4+\delta$-th moments of the supOU process exist. We call $C:=-\frac{\mu}{B(\alpha_{\pi}-1)}$ and, following the notations of Section \ref{sec3}, $D(k)=-\frac{\sigma^2 (1-B\Delta k)^{1-\alpha_{\pi}}}{2B(\alpha_{\pi}-1)}$ for $k=0,\ldots,m$. 
We then consider the vector $Z=(X_{t\Delta},X_{(t+1)\Delta},\ldots,X_{(t+m)\Delta})$ and the function $f:\R^{m+1}\to\R^{m+2}$ such that
\[
f(Z_t)=f\left(
  \begin{array}{l}
\,\,\,\,X_{t\Delta} \\
\,\,\,\,X_{(t+1)\Delta}\\
\,\,\,\,\ldots\\
\,\,\,\,X_{(t+k)\Delta}\\
\,\,\,\,\ldots\\
\,\,\,\,X_{(t+m)\Delta}
\end{array}
\right)=h(X_t,\theta_0)+\left(
  \begin{array}{l}
\,\,\,\,C \\
\,\,\,\,D(0)+C^2\\
\,\,\,\,\ldots\\
\,\,\,\,D(k)+C^2\\
\,\,\,\,\ldots\\
\,\,\,\,\ldots\\
\,\,\,\,D(m)+C^2
\end{array}
\right).
\]
$Z$ is a $\theta$-weakly dependent process with coefficients 
\[
 \theta_Z(r)= \mathcal{D} \theta_X(r-m\Delta)
\]
where $\theta_X$ is given in Formula (\ref{theta_sup_1}). 
The assumptions of Proposition \ref{her3} hold with $p=4+\delta$, $c=1$, $a=2$, thus
$f(Z_t)$ is a $\theta$-weakly dependent process with coefficients 
$\mathcal{C}(\mathcal{D}\theta_X(r-m\Delta))^{\frac{2+\delta}{3+\delta}}$ for
$r \in \N$. Hence, $h(X_t,\theta_0)$ is a $\theta$-weak dependent process with the same coefficients and mean zero. We have that $\theta_h(r)=\mathcal{C} \mathcal{D}^\frac{2+\delta}{3+\delta}\Big(-\frac{\sigma^2 (1-B(2r-2m\Delta))^{1-\alpha_{\pi}}}{2B(\alpha_{\pi}-1)} +\Big(\frac{-2\mu (1-B(r-m\Delta))^{1-\alpha_{\pi}}}{2B(\alpha_{\pi}-1)}\Big)^2\Big)^{\frac{2+\delta}{6+2\delta}}$, where $\alpha_{\pi}$ satisfies the inequality $(\alpha_{\pi}-1)>2(1+\frac{1}{\delta})(\frac{3+\delta}{2+\delta})$ by assumption. Analogously to the proof of Corollary \ref{clt_multi_acf}, by applying \cite[Theorem 1]{DR00} and the Cramer-Wold device, we obtain the distributional result (\ref{asy_h}).
\end{proof}

\begin{Remark}
\label{Sigma}
The coefficients of the matrix $Cov(h(X_0,\theta_0),h(X_l,\theta_0))$ for $l \in \Z$ are 
\begin{equation}
\label{11}
Cov(h_{\E}(X_0^{(m)},\theta_0),h_{\E}(X_l^{(m)},\theta_0)= D(l),
\end{equation}
\begin{equation}
\label{1p}
Cov(h_{\E}(X_0^{(m)},\theta_0),h_{p}(X_l^{(m)},\theta_0))=\kappa(0,l,l+p)+C (D(l)+D(l+p)),
\end{equation}
\begin{equation}
\label{pq}
Cov(h_{p}(X_0^{(m)},\theta_0),h_{q}(X_l^{(m)},\theta_0))=\kappa(0,p,l,l+q) 
\end{equation}
\[+C(\kappa(0,p,l)+\kappa(0,p,l+q)+\kappa(p,l,l+q)+\kappa(0,l,l+q))\]
\[+C^2(D(l-p)+D(l+q-p)+D(l)+D(l+q))+D(l)D(l+q-p)+D(l+q)D(l-p),\]
\noindent
for $p,q \in \{0,\ldots,m\}$, where $C$ and $D(l)$ are defined in the proof of Theorem \ref{asy_mom1} and $\kappa(i,j,k)$ and $\kappa(i,j,k,l)$ are respectively the cumulants of the supOU process of order three and four for $i,j,k,l \in \{0,\ldots,m\}$ defined in (\ref{cumulant3}) and (\ref{cumulant}).

In Table \ref{tab1}, the explicit expressions of the autocovariances and cumulants of the supOU process are reported.

\begin{sidewaystable}
\centering
\begin{tabular}{|c|c|}
\hline
$\{l\geq 0\}$ & $\{l<0\}$\\
$Cov(X_0,X_{l\Delta})=\frac{-\sigma^2(1-B \Delta l)^{1-\alpha_{\pi}}}{2B(\alpha_{\pi}-1)}$ & $Cov(X_0,X_{l\Delta})= \frac{-\sigma^2(1+B \Delta l)^{1-\alpha_{\pi}}}{2B(\alpha_{\pi}-1)}$\\
\hline
$\{l\geq 0\}$ & $\{l<0\}$\\
$\kappa(0,l,l+p)=\frac{-s \sigma^3 (1-B\Delta(2l+p))^{1-\alpha_{\pi}}}{3B(\alpha_{\pi}-1)}$& $\kappa(0,p,l)=\frac{-s \sigma^3(1-B\Delta(p-l))^{1-\alpha_{\pi}}}{3B(\alpha_{\pi}-1)}$\\
\hline
$ \{l\geq p\}$ & $\{ 0<l<p\} \vee \{l\leq 0\} $ \\
$\kappa(p,l,l+q)=\frac{-s \sigma^3(1-B\Delta(2l-2p+q))^{1-\alpha_{\pi}}}{3B(\alpha_{\pi}-1)}$ & $\kappa(p,l,l+q)=\frac{-s \sigma^3(1-B\Delta(p-l+q))^{1-\alpha_{\pi}}}{3B(\alpha_{\pi}-1)}$\\
\hline
$ \{l\geq 0\}$ &  $\{l<0\}$ \\
$\kappa(0,p,l,l+q)=-\frac{(\eta-3)\sigma^4}{4B(\alpha_{\pi}-1)} (1-B\Delta(p+2l+q))^{1-\alpha_{\pi}}$ & $\kappa(0,p,l,l+q)=-\frac{(\eta-3)\sigma^4}{4B(\alpha_{\pi}-1)} (1-B\Delta(p+q-2l))^{1-\alpha_{\pi}}$\\
\hline
\end{tabular}
\caption{Explicit closed formula for the summands appearing in (\ref{11}),(\ref{1p}) and (\ref{pq}), where $p \leq q$ and $\mu=\E[L_1]$, $\sigma^2= Var[L_1]$, $s=\sigma^{-3}\E[(L_1-\mu)^3]$ and $\eta= \sigma^{-4}\E[(L_1-\mu)^4]$.}
\label{tab1}
\end{sidewaystable}

\end{Remark}

\begin{Corollary}
\label{asy_mom2}
Let $\Lambda$ be a real valued L\'evy basis with generating quadruple $(\gamma,0,\nu,\pi)$ such that $\int_{\R} |x| \, \nu(dx) < \infty$ and
$ \int_{|x|>1} |x|^{4+\delta} \, \nu(dx) < \infty$ for some $\delta>0$ and let Assumption \ref{assumption0} hold with $\alpha_{\pi}-1 > (1+\frac{1}{\delta})(\frac{3+\delta}{2+\delta})$. 
Let $(X_t)_{t \in \R}$ be the resulting supOU process, then $h(X_t,\theta_0)$ is a $\theta$-weakly dependent process,
\[
H_{\Sigma}=\sum_{l\in \Z} Cov(h(X_0,\theta_0),h(X_l,\theta_0))
\]
is finite, positive semidefinite and as $N \to \infty$
\[
\sqrt{N}g_{N,m}(X,\theta_0) \xrightarrow{d} \mathcal{N}(0, H_{\Sigma}).
\]
\end{Corollary}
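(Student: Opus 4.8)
The plan is to mirror the proof of Theorem \ref{asy_mom1} essentially verbatim, replacing each second-moment/infinite-variation tool by its finite-variation counterpart. The only structural change is that the supOU $\theta$-coefficients now decay at the rate of the covariance itself rather than of its square root, and this is precisely what halves the admissible range for $\alpha_\pi$.

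First I would record the relevant moments. Since the generating quadruple is $(\gamma,0,\nu,\pi)$ with $\int_{\R}|x|\nu(dx)<\infty$ (finite variation) and $\int_{|x|>1}|x|^{4+\delta}\nu(dx)<\infty$, Corollary \ref{moment2}(i) applied with $r=4+\delta$ yields $\E[|X_0|^{4+\delta}]<\infty$, using that the exponential kernel lies in $L^{4+\delta}(\R^-\times\R^+,\pi\otimes\lambda)$ because $\int_{\R^-}(-A)^{-1}\pi(dA)<\infty$ by (\ref{supou_ass}). In particular $\mu=\E[L_1]$ and $\sigma^2=\mathrm{Var}[L_1]$ are finite, so the closed-form moments (\ref{moments1}) and (\ref{moments2}) are available.

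Next, following the notation of Theorem \ref{asy_mom1}, set $C:=-\mu/(B(\alpha_\pi-1))$ and $D(k):=-\sigma^2(1-Bk\Delta)^{1-\alpha_\pi}/(2B(\alpha_\pi-1))$, and write $h(X_t,\theta_0)=f(Z_t)-v$ for the lagged vector $Z_t=(X_{t\Delta},\ldots,X_{(t+m)\Delta})$, a constant vector $v$, and the map $f\colon\R^{m+1}\to\R^{m+2}$ whose entries are the monomials $x_0$ and $x_0x_k$. By Proposition \ref{lem} in its finite-variation form (\ref{coeff_lag_finite}), the process $Z$ is $\theta$-weakly dependent with $\theta_Z(r)=\mathcal{D}\,\theta_X(r-m\Delta)$, where now $\theta_X$ is the finite-variation supOU coefficient obtained from Corollary \ref{quattro_theta}, equivalently (\ref{theta_sup_2}) and (\ref{theta_sup_3}); under Assumption \ref{assumption0} and Remark \ref{decay} this satisfies $\theta_X(r)\asymp Cov(X_0,X_r)=O(r^{-(\alpha_\pi-1)})$, i.e. it decays at the same polynomial rate as the covariance. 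The map $f$ obeys the growth bound (\ref{hyp_her2}) with $p=4+\delta$, $c=1$, $a=2$, so Proposition \ref{her3} gives that $h(X_t,\theta_0)$ is $\theta$-weakly dependent and mean zero with coefficients $\theta_h(r)=\mathcal{C}\,\bigl(\mathcal{D}\,\theta_X(r-m\Delta)\bigr)^{\frac{2+\delta}{3+\delta}}=O\bigl(r^{-(\alpha_\pi-1)\frac{2+\delta}{3+\delta}}\bigr)$.

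Finally I would feed this into the limit theorem exactly as in Corollary \ref{clt_multi_acf}: $h(X_t,\theta_0)$ is a measurable functional of the mixing, hence ergodic, MMA process $X$ (\cite{FS13}); any linear combination $a^{\prime}h(X_t,\theta_0)$ inherits the $\theta$-coefficients by Lipschitz continuity; and \cite[Theorem 1]{DR00} applied to each such combination, together with the Cramer-Wold device, delivers $\sqrt{N}\,g_{N,m}(X,\theta_0)\xrightarrow{d}\mathcal{N}(0,H_\Sigma)$, with $H_\Sigma$ finite and positive semidefinite as the limit of covariance matrices once the series (\ref{cov_h}) converges. The one point I would watch most carefully, and the only place that genuinely differs from Theorem \ref{asy_mom1}, is the verification of the summability condition of \cite[Theorem 1]{DR00}. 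After the hereditary composition the required threshold on the decay exponent of $\theta_h$ is the same as in Theorem \ref{asy_mom1}; the difference lies only in $\theta_h$ itself. In Theorem \ref{asy_mom1} the infinite-variation coefficient behaves like $Cov(X_0,X_r)^{1/2}$ and contributes only $(\alpha_\pi-1)/2$, forcing $\alpha_\pi-1>2(1+\tfrac{1}{\delta})\tfrac{3+\delta}{2+\delta}$, whereas here $\theta_X$ carries the full exponent $\alpha_\pi-1$, so the condition reduces to $\alpha_\pi-1>(1+\tfrac{1}{\delta})\tfrac{3+\delta}{2+\delta}$, exactly half of the earlier requirement.
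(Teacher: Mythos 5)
Your proof is correct and follows exactly the route the paper intends: the paper's own proof of this corollary is just the one-line remark that it follows the same steps as Theorem \ref{asy_mom1}, and you have correctly identified that the only substantive change is replacing the infinite-variation coefficient $\theta_X(r)\asymp Cov(X_0,X_{2r})^{1/2}$ by the finite-variation one, which decays like $Cov(X_0,X_r)$ itself and therefore halves the required lower bound on $\alpha_\pi-1$ (cf.\ Remark \ref{rem65}). One cosmetic caveat: \eqref{theta_sup_2} and \eqref{theta_sup_3} are stated for subordinators, whereas the corollary only assumes finite variation, so strictly you should quote the general coefficient of Corollary \ref{quattro_theta}, which for the supOU kernel equals a constant times $\int_{\R^-}(-e^{Ar}/A)\,\pi(dA)$ and has the same $r^{-(\alpha_\pi-1)}$ decay under Assumption \ref{assumption0}.
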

The proof of the Corollary follows the same steps as Theorem \ref{asy_mom1}. 

\begin{Remark}\label{rem65}
Under the assumptions of Corollary \ref{asy_mom2}, a slower decay of the autocovariances of a supOU process is required to obtain asymptotic normality compared to Theorem \ref{asy_mom1}. 
Moreover, if all the moments of the underlying L\'evy process exist then the asymptotic result of Corollary \ref{asy_mom2} holds assuming that $\alpha_{\pi}>2$. The latter assumption results in the slowest decay of the autocovariances of $X$ that can be reached under short memory, see Remark \ref{decay} remembering that in the notations of this section $\alpha=\alpha_{\pi}-1$, whereas the asymptotic result of Theorem \ref{asy_mom1} holds, under these assumptions, for $\alpha_{\pi}>3$.
\end{Remark}

Several assumptions have to be made to show the asymptotic normality of the GMM estimator (\ref{estimator}):
\begin{Assumption}
\label{assumption1}
The parameter space $\Theta$ is compact and large enough to include the true parameter vector $\theta_0$. 
\end{Assumption}
\begin{Assumption}
\label{assumption2}
The matrix $A_{N,m}$ converges in probability to a positive definite matrix of constants $A$.
\end{Assumption}
\begin{Assumption}
\label{assumption_pos1}
The matrix $H_{\Sigma}$ is positive definite.
\end{Assumption}
In our set-up we always need to choose a parameter space such that $\mu \geq 0$, $\sigma^2 > 0$, $\alpha_{\pi}>2$ and $B<0$.
However, Assumption \ref{assumption1} remains reasonable, due to the fact that typically an optimization procedure is used to determine $\hat{\theta}^{N,m}$ and then some parameter bounds are always imposed in practice. 

\begin{Theorem}
\label{GMM_asy}
Let $\Lambda$ be a real valued L\'evy basis with generating quadruple $(\gamma,\Sigma,\nu,\pi)$ and $X$ a supOU process satisfying assumptions (\ref{supou_ass}) such that $\int_{|x|>1} |x|^{4+\delta} \, \nu(dx) < \infty$ for some $\delta >0$. Let Assumption \ref{assumption0} hold with $\alpha_{\pi}-1 >(1+\frac{1}{\delta})(\frac{6+2\delta}{2+\delta})$ or $\alpha_{\pi}-1 > (1+\frac{1}{\delta})(\frac{3+\delta}{2+\delta})$ if in addition $\int_{\R} |x| \, \nu(dx) < \infty$. Moreover, if Assumption \ref{assumption1}, \ref{assumption2} and \ref{assumption_pos1} hold, then as $N$ goes to infinity
\[
\sqrt{N}(\hat{\theta}_{0}^{N,m}-\theta_0) \xrightarrow{d} \mathcal{N}(0, M H_{\Sigma} M^{\prime})
\]
where
\begin{align*}
M= (G_0^{\prime} A G_0)^{-1}G_0^{\prime} A,&\,\,\,\, G_0=\E[\frac{\partial h(X_t,\theta)}{\partial \theta^{\prime}}]_{\theta=\theta_0},\\
\textrm{and} \,\,\, H_{\Sigma}=\sum_{l\in \Z} Cov&(h(X_0,\theta_0),h(X_l,\theta_0)).
\end{align*}

\end{Theorem}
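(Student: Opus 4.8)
The plan is to verify the hypotheses of the standard GMM asymptotic normality argument, whose conclusion is precisely the sandwich formula $\mathcal{N}(0, M H_{\Sigma} M^{\prime})$. The consistency $\hat{\theta}_0^{N,m} \xrightarrow{p} \theta_0$ has already been established in \cite{STW15} under Assumptions \ref{assumption1} and \ref{assumption2}, so the only ingredients left to supply are (a) a central limit theorem for the moment vector evaluated at the true parameter, (b) the uniform convergence of the Jacobian of $g_{N,m}$ to its expectation, and (c) the non-singularity of the matrices entering $M$.

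First I would start from the first-order condition of the minimization in (\ref{estimator}). Writing $G_{N,m}(\theta)=\partial g_{N,m}(X,\theta)/\partial\theta^{\prime}$, and noting that by Assumption \ref{assumption1} the consistent estimator $\hat{\theta}_0^{N,m}$ lies in the interior of $\Theta$ for $N$ large, it satisfies $G_{N,m}(\hat{\theta}_0^{N,m})^{\prime} A_{N,m}\, g_{N,m}(X,\hat{\theta}_0^{N,m})=0$. Applying the mean value theorem row by row to $g_{N,m}(X,\cdot)$ around $\theta_0$ gives $g_{N,m}(X,\hat{\theta}_0^{N,m})=g_{N,m}(X,\theta_0)+G_{N,m}(\bar{\theta})(\hat{\theta}_0^{N,m}-\theta_0)$ for intermediate points $\bar{\theta}$ between $\hat{\theta}_0^{N,m}$ and $\theta_0$. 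Substituting and solving yields
\[
\sqrt{N}(\hat{\theta}_0^{N,m}-\theta_0)=-\big[G_{N,m}(\hat{\theta}_0^{N,m})^{\prime} A_{N,m} G_{N,m}(\bar{\theta})\big]^{-1} G_{N,m}(\hat{\theta}_0^{N,m})^{\prime} A_{N,m}\,\sqrt{N}\,g_{N,m}(X,\theta_0).
\]

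Next I would control each factor on the right hand side. The randomness is supplied by Theorem \ref{asy_mom1} (or Corollary \ref{asy_mom2} in the finite variation case), which gives $\sqrt{N}\,g_{N,m}(X,\theta_0)\xrightarrow{d}\mathcal{N}(0,H_{\Sigma})$. For the Jacobian, the entries of $h(X_t,\theta)$ in (\ref{mom_func1}) are explicit and continuously differentiable functions of $\theta=(\mu,\sigma^2,\alpha_{\pi},B)$ on the interior $\{\mu\geq 0,\ \sigma^2>0,\ \alpha_{\pi}>2,\ B<0\}$, so $\partial h(X_t,\theta)/\partial\theta^{\prime}$ is continuous in $\theta$ and admits, on a compact neighborhood of $\theta_0$, an integrable envelope built from $\|X_{t\Delta}\|$ and $\|X_{(t+k)\Delta}\|$ whose moments are finite by Proposition \ref{moment1}. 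Since $X$, and hence $(h(X_t,\theta))_{t}$, is stationary and ergodic (cf. \cite{FS13}), a uniform law of large numbers yields $\sup_{\theta}\|G_{N,m}(\theta)-G_0(\theta)\|\xrightarrow{p}0$ over that neighborhood, where $G_0(\theta)=\E[\partial h(X_t,\theta)/\partial\theta^{\prime}]$ and $G_0(\theta_0)=G_0$. Combined with the consistency of $\hat{\theta}_0^{N,m}$ and $\bar{\theta}$ this gives $G_{N,m}(\hat{\theta}_0^{N,m})\xrightarrow{p}G_0$ and $G_{N,m}(\bar{\theta})\xrightarrow{p}G_0$, while $A_{N,m}\xrightarrow{p}A$ by Assumption \ref{assumption2}.

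Finally, Assumption \ref{assumption_pos1} together with the full column rank of $G_0$ (local identification, guaranteed by taking $m\geq 2$ moment conditions against the four parameters) makes $G_0^{\prime} A G_0$ invertible, so that $M=(G_0^{\prime} A G_0)^{-1}G_0^{\prime} A$ is well defined. Applying Slutsky's theorem to the displayed identity, the inverse bracket converges in probability to $(G_0^{\prime} A G_0)^{-1}$ and the prefactor to $G_0^{\prime} A$, whence $\sqrt{N}(\hat{\theta}_0^{N,m}-\theta_0)\xrightarrow{d}-M\,\mathcal{N}(0,H_{\Sigma})=\mathcal{N}(0,M H_{\Sigma} M^{\prime})$, the sign being immaterial by symmetry of the limit law. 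I expect the main obstacle to be step (b): upgrading the pointwise ergodic theorem to uniform convergence of the Jacobian requires continuity in $\theta$ together with an integrable dominating function, and one must check that the power $(1-Bk\Delta)^{1-\alpha_{\pi}}$ and its $\theta$-derivatives remain uniformly bounded over the compact parameter space so that the envelope is genuinely integrable.
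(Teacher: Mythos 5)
Your proposal is correct and follows essentially the same route as the paper, which verifies Assumptions 1.1--1.9 of M\'aty\'as \cite{M99} and then invokes the standard first-order-condition/mean-value-expansion/Slutsky argument of \cite[Theorem 1.2]{M99}; your displayed identity for $\sqrt{N}(\hat{\theta}_0^{N,m}-\theta_0)$ is exactly that argument written out. One remark: the ``main obstacle'' you anticipate in step (b) does not actually arise. Each component of $h(X_t,\theta)$ in (\ref{mom_func1}) has the form (monomial in $X$, free of $\theta$) plus (smooth deterministic function of $\theta$), so $\partial h(X_t,\theta)/\partial\theta^{\prime}$ does not depend on $X_t$ at all; hence $G_{N,m}(X,\theta)$ is a deterministic continuous matrix function of $\theta$, no integrable envelope in $\|X_{t\Delta}\|$ is needed, and the convergence $G_{N,m}(\bar\theta)\to G_0$ is just the continuous mapping theorem applied along $\bar\theta\xrightarrow{p}\theta_0$. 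The paper exploits precisely this observation (it also uses the same cancellation to verify the stochastic Lipschitz condition, Assumption 1.6 in \cite{M99}). A second, minor point: your claim that full column rank of $G_0$ is ``guaranteed by taking $m\ge 2$ moment conditions against the four parameters'' is not a valid argument --- having more moment conditions than parameters does not by itself give rank $4$; invertibility of $G_0^{\prime}AG_0$ is a regularity condition inherited from the \cite{M99} framework (tied to the identifiability established in \cite{STW15}) rather than something you can deduce from $m\ge 2$ alone.
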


\begin{proof}
We follow the steps of the proof of \cite[Theorem 1.2]{M99}. This involves checking Assumption 1.1-1.9 in \cite{M99}. 
Note that Assumption 1.1-1.3 in \cite{M99} are sufficient to give the consistency of the estimator, as obtained in \cite[Theorem 3.2]{STW15}. We show them for completeness. 
Assumption 1.1(i) is fulfilled by the function $h(X_t,\theta)$ by construction. Assumption 1.1.(ii) is satisfied since the true parameter vector $\theta_0$ is identifiable as shown in \cite[Proposition 3.3]{STW15}.
Asssumption 1.2 in \cite{M99} requires that $\sup_{\theta \in \Theta} |g^{(i)}_{N,m}(X,\theta)-\E[h^{(i)}(X_t,\theta)]| \xrightarrow{p} 0$ as $N \to \infty$, for all $i=1,\ldots,m+2$, i.e. each vector component of $g_{N,m}(X,\theta)-\E[h(X_t,\theta)]$ converges uniformly in probability to zero for each $\theta \in \Theta$. Assumption 1.4-1.6 in \cite{M99} represent three sufficient conditions that if fulfilled imply as consequence Assumption 1.2 in \cite{M99}. 
We then show that all three of them hold in our set-up. Assumption 1.4 in \cite{M99} corresponds to our Assumption \ref{assumption1} and Assumption 1.5 in \cite{M99} follows from the ergodicity of the supOU process. Showing Assumption 1.6 in \cite{M99} means to prove that a stochastic Lipshitz-type assumption holds for each component of the function $h(X_t,\theta)$. 
Let $\theta_i=(\mu_i,\sigma^2_i,\alpha_{\pi}^i,B_i)$ be parameter vectors belonging to $\Theta$ for $i=1,2$. Then, for example for the first component 
\[
\Big| h_{\E}(X_t^{(m)},\theta_1)-h_{\E}(X_t^{(m)},\theta_2)\Big|=\Big| \frac{\mu_1}{B_1(\alpha_{\pi}^1-1)}- \frac{\mu_2}{B_2(\alpha_{\pi}^2-1)} \Big|.    
\] 
That means, by construction, the terms where $X_t$ appears cancel out and the stochastic Lipschitz-type condition reduces to a Lipschitz continuity condition on the non-random terms in each component of $h(X_t,\theta)$. Since the terms have bounded first partial derivatives, they are Lipshchitz continuous and Assumption 1.6 in \cite{M99} holds.
Assumption 1.3 in \cite{M99} is implied by our Assumption \ref{assumption2}. Assumption 1.7 in \cite{M99} is fulfilled by construction, then $h(X_t,\theta)$ is continuously differentiable w.r.t. $\theta$ in $\Theta$. Let $G_{N,m}(X,\theta)=\frac{1}{N-m} \sum_{t=1}^{N-m} \frac{\partial h(X_t,\theta)}{\partial \theta^{\prime}}$, Assumption 1.8 in \cite{M99} requires that a weak law of large numbers applies to $\frac{\partial h(X_t,\theta)}{\partial \theta^{\prime}}$ in a neighborhood of $\theta_0$. That is, for each sequence $\theta^*_N$ such that $ \theta^*_N \xrightarrow{p} \theta_0$ then $G_{N,m}(X,\theta^*_N) \to G_0$.
We have that the matrix
\[
\frac{\partial h(X_t,\theta)}{\partial \theta^{\prime}}=
\]
\begin{equation*}
\footnotesize
\left[\begin{array}{cccc}
\frac{1}{B(\alpha_{\pi}-1)} & 0 & -\frac{\mu}{B(\alpha_{\pi}-1)^2}& -\frac{\mu}{B^2(\alpha_{\pi}-1)}\\
-\frac{2\mu}{B^2(\alpha_{\pi}-1)^2} & \frac{1}{2B(\alpha_{\pi}-1)} &\frac{2\mu^2}{B^2(\alpha_{\pi}-1)^3}-\frac{\sigma^2}{2B(\alpha_{\pi}-1)^2}& \frac{2\mu^2}{B^3(\alpha_{\pi}-1)^2}-\frac{\sigma^2}{2B^2(\alpha_{\pi}-1)}\\
\ldots & \ldots & \ldots & \ldots\\
-\frac{2\mu}{B^2(\alpha_{\pi}-1)^2}&\frac{(1-B\Delta k)^{1-\alpha_{\pi}}}{2B(\alpha_{\pi}-1)}& \tilde{a}(\Delta,\mu,\sigma^2,\alpha_{\pi},B,k)& \tilde{b}(\Delta,\mu,\sigma^2,\alpha_{\pi},B,k)\\
\ldots & \ldots & \ldots & \ldots\\
-\frac{2\mu}{B^2(\alpha_{\pi}-1)^2}&\frac{(1-B\Delta m)^{1-\alpha_{\pi}}}{2B(\alpha_{\pi}-1)}& \tilde{a}(\Delta,\mu,\sigma^2,\alpha_{\pi},B,m)& \tilde{b}(\Delta,\mu,\sigma^2,\alpha_{\pi},B,m)
\end{array}\right],
\end{equation*}
where 
\begin{equation*}
 \tilde{a}(\Delta,\mu,\sigma^2,\alpha_{\pi},B,k)=\frac{2\mu^2}{B^2(\alpha_{\pi}-1)^3}-\frac{\sigma^2((\alpha_{\pi}-1)ln(1-B\Delta k)+1)}{2B(\alpha_{\pi}-1)^2(1-B\Delta k)^{\alpha_{\pi}-1}}
 \end{equation*}
  and
\begin{equation*}
\tilde{b}(\Delta,\mu,\sigma^2,\alpha_{\pi},B,k)= \frac{2\mu^2}{B^3(\alpha_{\pi}-1)^2}+ \sigma^2\frac{B \Delta k (\alpha_{\pi}-1)-(1-B \Delta k)}{2B^2(\alpha_{\pi}-1)(1-B \Delta k)^{\alpha_{\pi}}}
\end{equation*}
for $k=1\ldots,m$.
Then, $\frac{\partial h(X_t,\theta)}{\partial \theta^{\prime}}$ does not depend on $X_t$, $G_{N,m}(X,\theta)=\frac{\partial h(X_t,\theta)}{\partial \theta^{\prime}}$ and $G_0= \frac{\partial h(X_t,\theta_0)}{\partial \theta^{\prime}}$. By the continuous mapping theorem Assumption 1.8 in \cite{M99} then follows.
Assumption 1.9 in \cite{M99} follows by Theorem \ref{asy_mom1} or Corollary \ref{asy_mom2}. Then, because of Assumption \ref{assumption_pos1}, the asymptotic normality of the estimator follows from the same steps as in the proof of \cite[Theorem 1.2]{M99} when we replace $f_T$ and $F_T$ by $g_{N,m}$ and $G_{N,m}$.
\end{proof}

\subsection{SupOU SV model}
We work now with a sample $\{Y_t: t=1,\ldots,N\}$ of the return process and define $Y_t^{(m)}=(Y_{t+1}, Y_{t+2},\ldots,Y_{t+m+1})$ for $t=1,\ldots,N-m$.

The moment function is given by the measurable function
$\tilde{h}: \R^{m+1} \times \Theta \to \R^{m+2}$ as
\begin{equation*}
\tilde{h}(Y_t,\theta)=\left(\begin{array}{l}
\tilde{h}_{Var}(Y_t^{(m)},\theta)\\
\tilde{h}_{0}(Y_t^{(m)},\theta)\\
\tilde{h}_1(Y_t^{(m)},\theta)\\
\,\,\,\,\,\,\,\,\vdots\\
\tilde{h}_m(Y_t^{(m)},\theta)
\end{array}\right)
\end{equation*}
\begin{equation}
\label{mom_func2}
=\left(\begin{array}{l}
Y_{t+1}^2+\frac{\mu \Delta}{B(\alpha_{\pi}-1)}\\
Y_{t+1}^4-3\Big(\frac{\Delta\mu}{B(\alpha_{\pi}-1)}\Big)^2+3\sigma^2
\frac{(1-B\Delta)^{3-\alpha_{\pi}}-1-\Delta B(\alpha_{\pi}-3)}{B^3(\alpha_{\pi}-1)(\alpha_{\pi}-2)(\alpha_{\pi}-3)}\\
Y_{t+1}^2Y_{t+2}^2-\Big(\frac{\Delta\mu}{B(\alpha_{\pi}-1)}\Big)^2+
\sigma^2 \frac{f_{2}-2f_{1}+f_{0}}{2B^3(\alpha_{\pi}-1)(\alpha_{\pi}-2)(\alpha_{\pi}-3)}\\
\,\,\,\,\,\,\,\,\vdots\\
Y_{t+1}^2Y_{t+1+m}^2-\Big(\frac{\Delta\mu}{B(\alpha_{\pi}-1)}\Big)^2+
\sigma^2 \frac{f_{m+1}-2f_{m}+f_{m-1}}{2B^3(\alpha_{\pi}-1)(\alpha_{\pi}-2)(\alpha_{\pi}-3)}
\end{array}\right).
\end{equation}

In this case, the sample moment function of the return process is
\begin{equation}
g_{N,m}(Y,\theta)=\left(\begin{array}{l}
\frac{1}{N-m} \sum_{t=1}^{N-m} \tilde{h}_{Var}(Y_t^{(m)},\theta)\\
\frac{1}{N-m} \sum_{t=1}^{N-m} \tilde{h}_{0}(Y_t^{(m)},\theta)\\
\frac{1}{N-m} \sum_{t=1}^{N-m} \tilde{h}_1(Y_t^{(m)},\theta)\\
\,\,\,\,\,\,\,\,\vdots\\
\frac{1}{N-m} \sum_{t=1}^{N-m} \tilde{h}_m(Y_t^{(m)},\theta)
\end{array}\right)
\end{equation}
\[
=\left(\begin{array}{l}
\frac{1}{N-m} \sum_{t=1}^{N-m} \Big(Y_{t+1}^2+\frac{\mu \Delta}{B(\alpha_{\pi}-1)}\Big) \\
\frac{1}{N-m} \sum_{t=1}^{N-m} 
\Big(Y_{t+1}^4-3\Big(\frac{\Delta\mu}{B(\alpha_{\pi}-1)}\Big)^2+3\sigma^2\frac{(1-B\Delta)^{3-\alpha_{\pi}}-1-\Delta B(\alpha_{\pi}-3)}{B^3(\alpha_{\pi}-1)(\alpha_{\pi}-2)(\alpha_{\pi}-3)}\Big)\\
\frac{1}{N-m} \sum_{t=1}^{N-m} 
\Big(Y_{t+1}^2Y_{t+2}^2-\Big(\frac{\Delta\mu}{B(\alpha_{\pi}-1)}\Big)^2+
\sigma^2 \frac{f_{2}-2f_{1}+f_{0}}{2B^3(\alpha_{\pi}-1)(\alpha_{\pi}-2)(\alpha_{\pi}-3)}\Big)\\
\,\,\,\,\,\,\,\,\vdots\\
\frac{1}{N-m} \sum_{t=1}^{N-m} 
\Big(Y_{t+1}^2Y_{t+1+m}^2-\Big(\frac{\Delta\mu}{B(\alpha_{\pi}-1)}\Big)^2+
\sigma^2 \frac{f_{m+1}-2f_{m}+f_{m-1}}{2B^3(\alpha_{\pi}-1)(\alpha_{\pi}-2)(\alpha_{\pi}-3)} \Big)
\end{array}\right),
\]
and $\theta_0$ can be estimated by minimizing the objective function
\begin{equation}
\label{estimator2}
\hat{\theta}_{0}^{*N,m}= \mathrm{argmin} \, g_{N,m}(Y,\theta)^{\prime} A_{N,m} g_{N,m}(Y,\theta)
\end{equation}
where $A_{N,m}$ is a positive definite matrix to weight the $m+2$ different moments collected in 
$g_{N,m}(Y,\theta)$. 

The consistency of the estimator (\ref{estimator2}) is shown in \cite[Theorem 3.2]{STW15}, and as before we need to show that the moment function $\tilde{h}(Y,\theta)$ satisfies a central limit theorem.
\begin{Theorem}
\label{asy_mom3}
Let $\Lambda$ be a real valued L\'evy basis with generating quadruple $(\gamma,0,\nu,\pi)$, Assumptions $\textrm{(H)}$ be satisfied such that $\int_{|x|>1} |x|^{4+\delta} \, \nu(dx) < \infty$, for some $\delta >0$, and let Assumption \ref{assumption0} hold with $\alpha_{\pi}-1 >  (1+\frac{1}{\delta})(\frac{6+2\delta}{\delta})$. 
Let $(Y_t)_{t \in R}$ be the resulting return process of a supOU SV model, then
\[
W{_\Sigma}= \sum_{l \in \Z} Cov(\tilde{h}(Y_0,\theta_0),\tilde{h}(Y_l,\theta_0)
\]
is finite, positive semidefinite and as $N \to \infty$
\[
\sqrt{N} g_{N,m}(Y,\theta_0) \xrightarrow{d} \mathcal{N}(0, W{_\Sigma}).
\]
\end{Theorem}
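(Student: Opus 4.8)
The plan is to transcribe the proof of Theorem~\ref{asy_mom1} to the return process, since the assertion is, apart from the inclusion of the second-order coordinate $\tilde h_{Var}$, the conclusion of Corollary~\ref{clt_multi_4_ret} specialised to the supOU SV model. The task therefore splits into verifying the moment and $\theta$-weak-dependence hypotheses for the explicit function $\tilde h(\cdot,\theta_0)$, and into checking that Assumption~\ref{assumption0} furnishes a decay rate of $\theta_X$ fast enough to feed \cite[Theorem~1]{DR00}.

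First I would fix the integrability bookkeeping. Under Assumptions~$(H)$ and \ref{assumption0} the exponential kernel lies in $L^{4+\delta}$ and $\int_{|x|>1}|x|^{4+\delta}\nu(dx)<\infty$, so Corollary~\ref{moment2} yields $\E[|X_0|^{4+\delta}]<\infty$. Conditioning on the volatility path and using the Gaussianity of the stochastic integral as in \cite{PS09} gives $\E[Y_1^{2q}]=c_q\,\E[V_1^{q}]$ with $V_1=\int_0^{\Delta}X_s\,ds$ inheriting the moments of $X$, so that $\E[|Y_1|^{8+2\delta}]<\infty$. Hence every coordinate of $\tilde h(Y_t,\theta_0)$, a polynomial of degree at most four in the returns, has a finite $(2+\tfrac{\delta}{2})$-moment, and by the explicit moments \eqref{moments2} each coordinate is centred, i.e.\ $\E[\tilde h(Y_t,\theta_0)]=0$.

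Next I would derive the $\theta$-weak dependence of $\tilde h(Y_t,\theta_0)$. Put $Z_t=(Y_{t+1},\dots,Y_{t+m+1})$, which by Lemma~\ref{lem2} (up to the harmless index shift) is $\theta$-weakly dependent with $\theta_Z(r)=(m+1)\sqrt{\Delta\,\theta_X((r-m-1)\Delta)}$, and write $\tilde h(Y_t,\theta_0)=\tilde f(Z_t)$ modulo an additive constant, where $\tilde f\colon\R^{m+1}\to\R^{m+2}$ is a polynomial of degree $a=4$ satisfying \eqref{hyp_her2}. Invoking the hereditary Proposition~\ref{her3} with $p=4+\delta$ and $a=4$ (legitimate since $a<p$ and $\E[|Y_1|^{4+\delta}]<\infty$) shows that $\tilde h(Y_t,\theta_0)$ is $\theta$-weakly dependent with $\theta_{\tilde h}(r)=\mathcal{C}\big((m+1)\sqrt{\Delta\,\theta_X((r-m-1)\Delta)}\big)^{\frac{\delta}{3+\delta}}$, matching the coefficients of Proposition~\ref{theta2_ret}; the lower-order coordinate $\tilde h_{Var}$ is absorbed because a degree-two monomial a fortiori obeys the degree-four bound.

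Finally I would run the central limit theorem. By \eqref{theta_sup_2} (equivalently the covariance \eqref{moments1}) together with the $\Gamma(\alpha_\pi,1)$ law of Assumption~\ref{assumption0}, $\theta_X(r)=O(r^{-(\alpha_\pi-1)})$, cf.\ Remark~\ref{decay} where $\alpha=\alpha_\pi-1$. The hypothesis $\alpha_\pi-1>(1+\tfrac1\delta)(\tfrac{6+2\delta}{\delta})$ is precisely the condition under which Corollary~\ref{clt_multi_4_ret} applies, so the coefficients $\theta_{\tilde h}$ are summable in the sense demanded by \cite[Theorem~1]{DR00}. Since $Z_t$, and thus $\tilde h(Y_t,\theta_0)$, is ergodic by the mixing argument following Lemma~\ref{lem2}, for every $a\in\R^{m+2}$ the scalar process $a'\tilde h(Y_t,\theta_0)$ is $\theta$-weakly dependent (a linear map is Lipschitz) and ergodic with the same coefficients; \cite[Theorem~1]{DR00} gives $N^{-1/2}\sum_{t}a'\tilde h(Y_t,\theta_0)\xrightarrow{d}\mathcal N(0,a'W_\Sigma a)$, and the Cram\'er--Wold device yields $\sqrt N\,g_{N,m}(Y,\theta_0)\xrightarrow{d}\mathcal N(0,W_\Sigma)$. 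Finiteness of $W_\Sigma$ is the summability just used and positive semidefiniteness holds because $W_\Sigma$ is a limit of covariance matrices of partial sums. I expect the only real obstacle to be the decay bookkeeping of this last paragraph: one must track how the $(4+\delta)$-integrability passes through the Brownian integration into $\tilde h$ and how the reduced exponent $\tfrac{\delta}{3+\delta}$ combines with the polynomial decay of $\theta_X$ to reproduce exactly the stated threshold on $\alpha_\pi$; everything else is assembly of the results of Sections~\ref{sec2}--\ref{sec4}.
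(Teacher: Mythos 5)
Your proposal follows essentially the same route as the paper: $\theta$-weak dependence of $\tilde h(Y_t,\theta_0)$ via Lemma \ref{lem2} and the hereditary Proposition \ref{her3} with $p=4+\delta$, $a=4$ (i.e.\ the coefficients of Proposition \ref{theta2_ret}), the explicit decay $\theta_X(r)=O(r^{-(\alpha_\pi-1)})$ from the Gamma specification, and then \cite[Theorem 1]{DR00} combined with the Cram\'er--Wold device. The additional moment bookkeeping you supply (passing $(4+\delta)$-integrability of $X$ through the Brownian integration to $\E[|Y_1|^{8+2\delta}]<\infty$) is consistent with what the paper leaves implicit, so the argument is correct and matches the paper's proof.
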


\begin{proof}
Proceeding as in Theorem \ref{asy_mom1}, it can be shown that $\tilde{h}(Y_t,\theta_0)$ is a $\theta$-weakly dependent process with zero mean, by using Lemma \ref{lem2} and Proposition \ref{her3}.
Given the $\theta$-coefficients (\ref{theta_sup_3}) , we have $\theta_{\tilde{h}}(r)=\mathcal{C}\mathcal{D}^{*\frac{\delta}{3+\delta}}\Big(-\Delta \frac{\mu (1-B\Delta(r-m-1))^{1-\alpha_{\pi}}}{B(\alpha_{\pi}-1)}\Big)^{\frac{\delta}{6+2\delta}}$, where $\alpha_{\pi}-1 > (1+\frac{1}{\delta})(\frac{6+2\delta}{\delta})$ by assumption.
Then applying \cite[Theorem 1]{DR00} and the Cramer-Wold device the result follows.
\end{proof}

\begin{table}
\footnotesize
\centering
\begin{tabular}{|c|c|}
\hline
$(i,j,k)$ & $A(i,j,k)$\\
\hline
$\{i \neq j \neq k \}$ & $0$\\
\hline
 $\{i\neq j\} \wedge \{j=k\}$ & $ 4 \int_{i\Delta}^{(i+1)\Delta} \int_{j \Delta}^{(j+1)\Delta} \int_{j \Delta}^s \E[X_t X_s X_u] du ds dt $\\
\hline
$\{i=j=k\}$ & $ 12 \int_{i\Delta}^{(i+1)\Delta} \int_{i \Delta}^{(i+1)\Delta} \int_{i \Delta}^s \E[X_t X_s X_u] du ds dt $\\
\hline
\end{tabular}
\caption{Explicit closed formula for the summand $A(i,j,k)$ for $(i,j,k) \in \Z^3$.}
\label{tab3mom}
\end{table}

\begin{sidewaystable}
\centering
\begin{tabular}{|c|}
\hline
$\{l\geq 1\}$ \\
$Cov(Y_1^2,Y_{l+1}^2)=\frac{-\sigma^2(f_{l+1}-2f_l+f_{l-1})}{2B^3(\alpha_{\pi}-1)(\alpha_{\pi}-2)(\alpha_{\pi}-3)}$\\
\hline
$\{l\geq p\}\wedge \{2l-2p+q \geq 2\}$  \\
$K(p,l,l+q)=\frac{-s \sigma^3(2g_{2l-2p+q-1}-g_{2l-2p+q-2}+g_{2l-2p+q+2}-2g_{2l-2p+q+1})}{6B^4(\alpha_{\pi}-1)(\alpha_{\pi}-2)(\alpha_{\pi}-3)(\alpha_{\pi}-4)}$ \\
\hline
$\{l\geq 0\} \wedge \{2l+q\geq 2\}$ \\
$K(0,l,l+q)=\frac{-s \sigma^3(2g_{2l+q-1}-g_{2l+q-2}+g_{2l+q+2}-2g_{2l+q+1})}{6B^4(\alpha_{\pi}-1)(\alpha_{\pi}-2)(\alpha_{\pi}-3)(\alpha_{\pi}-4)}$ \\
\hline
$\{l\geq 0\}  \wedge \{2l+q+p \geq 3 \}$  \\
$K(0,p,l,l+q)=-\frac{(\eta-3)\sigma^4(-2h_{2l+q+p}+3h_{2l+q+p-1}-3h_{2l+q+p-2}+h_{2l+q+p-3}+h_{2l+q+p+3}-3h_{2l+q+p+2}+3h_{2l+q+p+1})}{12B^5(\alpha_{\pi}-1)(\alpha_{\pi}-2)(\alpha_{\pi}-3)(\alpha_{\pi}-4)(\alpha_{\pi}-5)}$\\
\hline
\end{tabular}
\caption{Explicit closed formula for the summands (\ref{11sv}),(\ref{1psv}) and (\ref{pqsv}), where $p \geq q$, $f_k=(1-B\Delta k)^{3-\alpha_{\pi}}$, $g_k=(1-B\Delta k)^{4-\alpha_{\pi}}$, $h_k=(1-B\Delta k)^{5-\alpha_{\pi}}$ and $\mu=\E[L_1]$, $\sigma^2= Var[L_1]$, $s=\sigma^{-3}\E[(L_1-\mu)^3]$ and $\eta= \sigma^{-4}\E[(L_1-\mu)^4]$.}
\label{tab2}
\end{sidewaystable}

\begin{Remark}

We observe that,
\begin{align*}
&\E[Y_1^2]=\E[V_1]=-\frac{\Delta \mu}{B(\alpha-1)}:=C^*,\\
&Var[Y_1^2]= 3 Var(V_1)+2\E(V_1)^2=-3 \sigma^2 \frac{(1-B \Delta)^{3-\alpha_{\pi}}-1-\Delta B(\alpha_{\pi}-3)}{B^3(\alpha_{\pi}-1)(\alpha_{\pi}-2)(\alpha_{\pi}-3)}
\end{align*}
\[ +2 \Big(-\frac{\Delta \mu}{B (\alpha_{\pi}-1)} \Big)^2 :=D^*(0),
\]
and
\[
Cov(Y_1^2,Y_{1+k}^2)=Cov(V_1,V_{1+k})=-\sigma^2 \frac{f_{k+1}-2f_{k}+f_{k-1}}{2B^3(\alpha-1)(\alpha-2)(\alpha-3)}:=D^*(k),
\]
where $V$ is the integrated process as defined in (\ref{integrated}) and $C^*$ and $D^*(k)$ are defined in Remark \ref{cum}.
Thus, the coefficients of the matrix $Cov(\tilde{h}(Y_0,\theta_0),\tilde{h}(Y_l,\theta_0)$ for $l \in \Z$ and $p,q \in \{0,\ldots,m\}$ are
\begin{equation}
\label{11sv}
Cov(\tilde{h}_{Var}(Y_0^{(m)},\theta_0),\tilde{h}_{Var}(Y_l^{(m)},\theta_0)= D^*(l),
\end{equation}
\begin{align}
\label{1psv}
\begin{split}
Cov(\tilde{h}_{Var}(Y_0^{(m)},\theta_0),\tilde{h}_{p}(Y_l^{(m)},\theta_0))=&K(0,l,l+p)+A(0,l,l+p)\\
+&C^* (D^*(l)+D^*(l+p)),
\end{split}
\end{align}
\begin{align}
\label{pqsv}
\begin{split}
&Cov(\tilde{h}_{p}(Y_0^{(m)},\theta_0),\tilde{h}_{q}(Y_l^{(m)},\theta_0))=K(0,p,l,l+q)+A(0,p,l,l+q)\\
&+C^*(K(0,p,l)+K(0,p,l+q)+K(p,l,l+q)+K(0,l,l+q))\\
&+C^{*2}(D^*(l+q-p)+D^*(l-p)+D^*(l+q)+D^*(l))\\
&+D^*(l)D^*(l+q-p)+D^*(l+q)D^*(l-p),
\end{split}
\end{align}
where $A(i,j,k)$ and $A(i,j,k,l)$ are defined in Table \ref{tab3mom} and Table \ref{tab4mom}, respectively, and $K(i,j,k)$ and $K(i,j,k,l)$ in (\ref{cumulant3_int}) and (\ref{cumulant_int}). 
In Table \ref{tab2}, the explicit expressions of the summands in (\ref{11sv}), (\ref{1psv}) and (\ref{pqsv}) can be found for a selection of indices $l,p,q$. 
The remaining cases can be easily derived by using the calculations in Table \ref{tab1}.
\end{Remark}

\begin{Remark}
Note that if all the moments of the underlying L\'evy process exist than the asymptotic result of Theorem \ref{asy_mom3} holds assuming that $\alpha_{\pi}>3$. 
\end{Remark}

Additional assumptions have to be made before showing the asymptotic normality of the GMM estimator (\ref{estimator2}):
\begin{Assumption}
\label{assumption4}
The  parameter vector $\theta_0$ is identifiable, i.e. $\E[\tilde{h}(Y,\theta)]=0$ for all $Y$ if and only if $\theta=\theta_0$. 
\end{Assumption}
\begin{Assumption}
\label{assumption_pos2}
The matrix $W_{\Sigma}$ is positive definite.
\end{Assumption}

Regarding Assumption \ref{assumption4}, it has been shown in \cite[Corollary 3.6]{STW15} that identifiability holds if the number of lags $m$ in the definition of the moment function is infinity, the so called asymptotic identifiability. In practice, we always work with a finite number of lags. Although proving identifiability rigorously seems to be out of reach, the asymptotic identifiability suggests that Assumption \ref{assumption4} should be satisfied if $m$ is sufficiently large. It will be interesting to analyze how this affects the precision of our estimator in a simulation study. This is, however, beyond the scope of the present paper.

Finally, we recall that in our set-up $\mu> 0$, $\sigma^2 > 0$, $\alpha_{\pi}>2$ and $B<0$ and the parameter space $\Theta$ is large enough to contain the true parameter vector. 

\begin{Theorem}
\label{GMM_sv}
Let $\Lambda$ be a real valued L\'evy basis with generating quadruple $(\gamma,0,\nu,\pi)$, Assumptions $\textrm{(H)}$ be satisfied such that $\int_{|x|>1} |x|^{4+\delta} \, \nu(dx) < \infty$, for some $\delta >0$, and
let Assumption \ref{assumption0} hold with $\alpha_{\pi}-1 >  (1+\frac{1}{\delta})(\frac{6+2\delta}{\delta})$. 
If, moreover, Assumptions \ref{assumption1}, \ref{assumption2}, \ref{assumption4} and \ref{assumption_pos2} hold, then as $N$ goes to infinity
\[
\sqrt{N}(\hat{\theta}_{0}^{*N,m}-\theta_0) \xrightarrow{d} \mathcal{N}(0, M W_{\Sigma} M^{\prime})
\]
where  
\begin{align*}
M= \E[G_0^{*\prime} A G_0^*]^{-1}G_0^{*\prime} A,&\,\,\, G_0^*=\E[\frac{\partial \tilde{h}(Y_t,\theta)}{\partial \theta^{\prime}}]_{\theta=\theta_0},\\
\textrm{and}\,\,\, W{_\Sigma}= \sum_{l \in \Z} Cov&(\tilde{h}(Y_0,\theta_0),\tilde{h}(Y_l,\theta_0).
\end{align*}

\end{Theorem}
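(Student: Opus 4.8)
The plan is to mirror the proof of Theorem \ref{GMM_asy} almost line for line, verifying Assumptions 1.1--1.9 of \cite{M99} for the return process $(Y_t)_{t\in\R}$ and its moment function $\tilde{h}$, and then invoking \cite[Theorem 1.2]{M99} with $f_T$ and $F_T$ replaced by $g_{N,m}(Y,\cdot)$ and its Jacobian. The consistency block, corresponding to Assumptions 1.1--1.3 in \cite{M99}, is already supplied by \cite[Theorem 3.2]{STW15}: Assumption 1.1(i) holds by the construction of $\tilde{h}$, Assumption 1.1(ii) is exactly our Assumption \ref{assumption4}, and Assumption 1.3 is implied by Assumption \ref{assumption2}. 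The essential structural difference from Theorem \ref{GMM_asy} is that identifiability is now imposed as a hypothesis rather than derived, since for the SV model only asymptotic identifiability is available.

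First I would verify Assumptions 1.4--1.6 of \cite{M99}, which together yield the uniform convergence Assumption 1.2. Assumption 1.4 is our Assumption \ref{assumption1}; Assumption 1.5 follows from the ergodicity of the return process, which is a consequence of its mixing property, established as in \cite[Theorem 4.2]{FS13}. For the stochastic Lipschitz-type Assumption 1.6 I would argue exactly as in Theorem \ref{GMM_asy}: in each component of $\tilde{h}(Y_t,\theta)$ the random term involving $Y_t$ enters additively and carries no dependence on $\theta$, so in the difference $\tilde{h}(Y_t^{(m)},\theta_1)-\tilde{h}(Y_t^{(m)},\theta_2)$ it cancels. The condition therefore collapses to ordinary Lipschitz continuity of the deterministic moment expressions in $\theta$; these are rational and power functions of $(\mu,\sigma^2,\alpha_\pi,B)$ with bounded first partial derivatives on the compact set $\Theta$, so the condition holds.

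Next I would treat the differentiability part. Assumption 1.7 holds because $\tilde{h}(Y_t,\theta)$ is continuously differentiable in $\theta$ by construction. Crucially, since the $Y_t$-dependent terms are additive and $\theta$-free, the Jacobian $\partial\tilde{h}(Y_t,\theta)/\partial\theta'$ does not depend on $Y_t$; hence $G_{N,m}(Y,\theta)=\partial\tilde{h}(Y_t,\theta)/\partial\theta'$ is deterministic, $G_0^*=\partial\tilde{h}(Y_t,\theta_0)/\partial\theta'$, and Assumption 1.8 (a weak law of large numbers for the Jacobian along sequences $\theta_N^*\xrightarrow{p}\theta_0$) follows from the continuous mapping theorem. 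Finally, Assumption 1.9, namely $\sqrt{N}g_{N,m}(Y,\theta_0)\xrightarrow{d}\mathcal{N}(0,W_\Sigma)$, is precisely the conclusion of Theorem \ref{asy_mom3}, whose hypotheses coincide with those assumed here.

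With Assumptions 1.1--1.9 verified and $W_\Sigma$ positive definite by Assumption \ref{assumption_pos2}, the asymptotic normality with covariance $MW_\Sigma M'$ follows from the Taylor-expansion argument in \cite[Theorem 1.2]{M99}. I do not expect a genuine obstacle, since the entire scaffolding of Theorem \ref{GMM_asy} transfers; the one point to check carefully is that the SV moment function $\tilde{h}$ retains the additive, $\theta$-independent placement of its $Y_t$-dependent terms. This is exactly what renders the Jacobian non-random and decouples the Lipschitz condition from the process, so that Assumptions 1.6 and 1.8 become immediate and the argument goes through unchanged.
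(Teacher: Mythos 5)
Your proposal is correct and follows essentially the same route as the paper: both verify Assumptions 1.1--1.9 of the GMM reference by transferring the argument of Theorem \ref{GMM_asy}, use the fact that the $Y_t$-dependent terms in $\tilde{h}$ are additive and $\theta$-free so that the Jacobian is deterministic (the paper writes it out explicitly) and Assumption 1.8 follows from the continuous mapping theorem, take Assumption 1.9 from Theorem \ref{asy_mom3}, and conclude via the Taylor-expansion argument under Assumption \ref{assumption_pos2}. The only cosmetic difference is that you spell out the verification of Assumptions 1.4--1.6 in detail where the paper simply points back to Theorem \ref{GMM_asy}.
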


\begin{proof}
We check that Assumptions 1.1-1.9 in \cite{M99} hold.
Assumptions 1.1-1.7 follow by Assumptions \ref{assumption1}, \ref{assumption2} and \ref{assumption4} and by construction of the moment function. The line of the proof, in this case, is exactly the same as in Theorem \ref{GMM_asy}.
The matrix
\[
\frac{\partial \tilde{h}(Y_t,\theta)}{\partial \theta^{\prime}}
\]
is equal to
\[
\left[\begin{array}{cccc}
\frac{\Delta}{B(\alpha_{\pi}-1)} &0 & -\frac{\Delta \mu}{B(\alpha_{\pi}-1)^2}&-\frac{\Delta \mu}{B^2(\alpha_{\pi}-1)}\\
-\frac{6\Delta^2\mu}{B^2(\alpha_{\pi}-1)^2}& \frac{3f_1-1-\Delta B(\alpha_{\pi}-3)}{B^3(\alpha_{\pi}-1)(\alpha_{\pi}-2)(\alpha_{\pi}-3} & a(\Delta,\mu,\sigma^2,\alpha_{\pi},B) & b(\Delta,\mu,\sigma^2,\alpha_{\pi},B)\\
 \ldots& \ldots &\ldots & \ldots\\
-\frac{2\Delta^2\mu}{B^2(\alpha_{\pi}-1)^2}& \frac{f_{m+1}-2f_m-f_{m-1}}{2B^3(\alpha_{\pi}-1)(\alpha_{\pi}-2)(\alpha_{\pi}-3)}& c(\Delta,\mu,\sigma^2,\alpha_{\pi},B,m)& d(\Delta,\mu,\sigma^2,\alpha_{\pi},B,m) \end{array}\right]
\]
where 
{\footnotesize
\begin{align*}
a(\Delta,\mu,\sigma^2,\alpha_{\pi},B)=& \frac{6\Delta^2\mu^2}{B^2(\alpha_{\pi}-1)^3}+3\frac{\sigma^2}{B^3} \frac{(\alpha_{\pi}-2)(\alpha_{\pi}-3)+(\alpha_{\pi}-1)(\alpha_{\pi}-2)+(\alpha_{\pi}-1)(\alpha_{\pi}-2)}{(\alpha_{\pi}-1)^2(\alpha_{\pi}-2)^2(\alpha_{\pi}-3)^2}\\
&-\frac{3\sigma^2}{B^3} f_1 l_1+\frac{3\sigma^2\Delta ((\alpha_{\pi}-1)+(\alpha_{\pi}-2))}{B^2(\alpha_{\pi}-1)^2(\alpha_{\pi}-2)^2},
\end{align*}
\begin{align*}
&b(\Delta,\mu,\sigma^2,\alpha_{\pi},B)=\frac{6\Delta^2\mu^2}{B^3(\alpha_{\pi}-1)^2}+\frac{3\sigma^2}{(\alpha_{\pi}-1)(\alpha_{\pi}-2)(\alpha_{\pi}-3)} \Big(\frac{(\alpha_{\pi}-3)(2\Delta+\Delta r_1)}{B^3}-3\frac{f_1-1}{B^4} \Big),\\
&c(\Delta,\mu,\sigma^2,\alpha_{\pi},B,k)=\frac{2\Delta^2\mu^2}{B^2(\alpha_{\pi}-1)^3}-\frac{\sigma^2}{2B^3}(f_{k+1}l_{k+1} - 2 f_{k}l_k+ f_{k-1}l_{k-1} )\,\, \textrm{for $k=1,\ldots,m$},
\end{align*}}
and
{\footnotesize
\begin{align*}
d(\Delta,\mu,\sigma^2,\alpha_{\pi},B,k)=& \frac{2\Delta^2\mu^2}{B^3(\alpha_{\pi}-1)^2}+\frac{\sigma^2}{2B^3(\alpha_{\pi}-1)(\alpha_{\pi}-2)}(r_{k+1} \Delta (k+1)-2 r_k \Delta k\\
& + r_{k-1}\Delta (k-1))-\frac{3\sigma^2}{2 B^4 (\alpha_{\pi}-1)(\alpha_{\pi}-2)(\alpha_{\pi}-3)} (f_{k+1}-2f_k+f_{k-1}),  
\end{align*}}
for $k=1,\ldots,m$, with $r_k:=(1-B\Delta k)^{2-\alpha_{\pi}}$, $f_k:=(1-B\Delta k)^{3-\alpha_{\pi}}$ and
{\scriptsize
\begin{align*}
&l_k=\frac{\ln(1-B\Delta k)(\alpha_{\pi}-1)(\alpha_{\pi}-2)(\alpha_{\pi}-3)+(\alpha_{\pi}-2)(\alpha_{\pi}-3)+(\alpha_{\pi}-1)(\alpha_{\pi}-3)+(\alpha_{\pi}-1)(\alpha_{\pi}-2)}{(\alpha_{\pi}-1)^2(\alpha_{\pi}-2)^2(\alpha_{\pi}-3)^2)},
\end{align*}}

for $k \in \{0,\ldots,m+1\}$. 
Therefore, $ \frac{\partial \tilde{h}(Y_t,\theta)}{\partial \theta^{\prime}}$ does not depend on $Y_t$ and, as in the proof of Theorem \ref{GMM_asy}, the continuous mapping theorem can be applied to show that Assumption 1.8 in \cite{M99} holds. Assumption 1.9 follows by the proof of Theorem \ref{asy_mom3}.
Then, because of Assumption \ref{assumption_pos2}, the normality of the GMM estimator follows from the same steps as in the proof of \cite[Theorem 1.2]{M99}.
\end{proof}

In \cite[Section 4]{STW15} a simulation study on the estimators (\ref{estimator}) and (\ref{estimator2}) looks at their finite sample performances (Theorem \ref{GMM_asy} and \ref{GMM_sv} are applicable to the set-up of the study). The analysis performed shows results in line with  asymptotic normality derived theoretically in this paper. To obtain reliable estimation of supOU processes and supOU SV model, a substantial amount of data is needed. 

\section*{Acknowledgements}
We thank the Deutsche Forschungsgemeinschaft for the financial support through the research grant STE 2005/1-1. Finally, we would like to thank the two anonymous reviewers and the editor for their insightful comments.

\section{Erratum: Weak dependence and GMM estimation of supOU and mixed moving average processes}

\setcounter{section}{3}
\setcounter{Proposition}{1}
\setcounter{equation}{9}
This erratum concerns the proof of Proposition \ref{her2}. Here, it is erroneously stated that the truncated process $X_i^{(M)}:= X_i \mathbb{1}_{\{\|X_{i}\|\leq M\}} $ is $\eta$-weakly dependent with the same coefficients as the process $X_i$. We need to redefine $X_{i}^{(M)}=P_M(X_i)$, where $P_M$ indicates the orthogonal projection of $X_i$ into the closed ball $\{x \in \R^m: \|x\| \leq M\}$. With this choice, the truncated process is $\eta$-weakly dependent, and the proof of Proposition \ref{her2} can be correctly carried out. For completeness, we report anew  the proof of Proposition \ref{her2} below.

\begin{Proposition}
Let $(X_t)_{t \in \R}$ be an $\R^n$-valued stationary process and assume there exists some constant $C>0$ such that $ \E[\|X_{0}\|^p]^{\frac{1}{p}} \leq C,$ with $p>1$, $h \colon \R^n \to \R^m$ be a function such that $h(0)=0$, $h(x)=(h_1(x),\ldots,h_m(x))$ and
	\begin{equation}
		\|h(x)-h(y)\| \leq \tilde{c} \,\|x-y\| (1+\|x\|^{a-1}+\|y\|^{a-1}),
	\end{equation}
	for $x,y \in \R^n$, $\tilde{c}>0$ and $1\leq a < p$. 
	Define $(Y_t)_{t \in \R}$ by $Y_t=h(X_t)$. If  $(X_t)_{t \in \R}$ is an $\eta$-weakly dependent process,
	then $(Y_t)_{t \in \R}$ is an $\eta$-weakly dependent process such that
	\[
	\forall \, r \geq 0, \,\,\, \eta_Y(r)= \mathcal{C} \, \eta_X(r)^{\frac{p-a}{p-1}},
	\]
	with the constant $\mathcal{C}$ independent of $r$.

\end{Proposition}

\begin{proof}
	
	For $(u,v) \in \N^*\times \N^*$, $(i_1,\ldots,i_u) \in \R^u$ and $(j_1,\ldots,j_v) \in \R^v$ with $i_1\leq\ldots\leq i_u \leq i_u+r\leq j_1\leq \ldots\leq j_v$, let us call
	\[
	X_i^*=(X_{i_1},\ldots,X_{i_u}),\,\,X_{j}^*=(X_{j_1},\ldots,X_{j_v}).
	\]
	
	Let $F:\R^{mu}\to \R$, $G:\R^{mv} \to \R$ be bounded Lipschitz functions, such that $\|F\|_{\infty},\|G\|_{\infty} \leq 1$, and define
	\begin{align*}
		\tilde{F}(X_i^*)=F(h(X_{i_1}),\ldots,h(X_{i_u})), \,\, \tilde{G}(X_j^*)= G(h(X_{j_1}),\ldots,h(X_{j_v})),
	\end{align*}
	and
	\[
	\tilde{F}^{(M)}(X_i^*)= F(h(X_{i_1}^{(M)}),\ldots,h(X_{i_u}^{(M)})),\,\, \tilde{G}^{(M)}(X_j^*)=G(h(X_{j_1}^{(M)}),\ldots,h(X_{j_v}^{(M)})),
	\]
	for $X_{i}^{(M)}=P_M(X_i)$, where $P_M$ indicates the orthogonal projection of $X_i$ into the closed ball $\{x \in \R^m: \|x\| \leq M\}$, and w.l.o.g $M>1$.
	We start by analyzing 
	\begin{align}
		\label{bound}
		\begin{split}
			&|Cov(\tilde{F}(X_i^*),\tilde{G}(X_j^*))| \leq |Cov(\tilde{F}(X_i^*)-\tilde{F}^{(M)}(X_i^*),\tilde{G}(X_j^*))|\\
			&+|Cov(\tilde{F}^{(M)}(X_i^*),\tilde{G}(X_j^*)-\tilde{G}^{(M)}(X_j^*))|+|Cov(\tilde{F}^{(M)}(X_i^*), \tilde{G}^{(M)}(X_j^*))|.
		\end{split}
	\end{align}
	We have that
	\begin{equation}
		\label{est1}
		|Cov(\tilde{F}(X_i^*)-\tilde{F}^{(M)}(X_i^*),\tilde{G}(X_j^*))|\leq 2 \|G\|_{\infty} \E|\tilde{F}(X_i^*)-\tilde{F}^{(M)}(X_i^*)|
	\end{equation}
	\[
	\leq 2 Lip(F) \sum_{l=1}^u \E \|h(X_{i_l})-h(X_{i_l}^{(M)})\|.
	\]
	By assumption, for each $l=1,\ldots,u$
	\begin{equation}
		\label{conto1}
		\E (\|h(X_{i_l})-h(X_{i_l}^{(M)})\|) \leq \tilde{c} \,\E (\|X_{i_l}-X_{i_l}^{(M)}\| ( 1+\|X_{i_l}\|^{a-1} + \|X_{i_l}^{(M)}\|^{a-1}))
	\end{equation}
	\[
	\leq \tilde{c} \, \E (\| X_{i_l} \| \mathbb{1}_{\|X_{i_l}\| >M})+ \tilde{c} \,\E (\|X_{i_l}\|^{a-1} \| X_{i_l} \| \mathbb{1}_{\|X_{i_l}\| >M})
	\]
	\[
	+\tilde{c} \,\E (\| X_{i_l} \| \mathbb{1}_{\|X_{i_l}\| >M}) M^{a-1}
	\]
	\[
	\leq \tilde{c} \, \E \Big(\| X_{i_l} \| \frac{\| X_{i_l} \|^{p-1}}{M^{p-1}} \mathbb{1}_{\|X_{i_l}\| >M} \Big)+ \tilde{c} \,\E \Big(\|X_{i_l}\|^{a-1} \frac{\| X_{i_l} \|^{p-a} }{M^{p-a}} \| X_{i_l} \| \mathbb{1}_{\|X_{i_l}\| >M}\Big)
	\]
	\[
	+ \tilde{c} \,\E \Big(\| X_{i_l} \| \frac{\| X_{i_l} \|^{p-1}}{M^{p-1}} \mathbb{1}_{\|X_{i_l}\| >M}\Big) M^{a-1}
	\]
	\[
	\leq \tilde{c} \, \E( \| X_{i_l} \|^p \mathbb{1}_{\|X_{i_l}\| >M}) M^{1-p} + 2 \tilde{c} \, \E( \| X_{i_l} \|^p \mathbb{1}_{\|X_{i_l}\| >M}) M^{a-p}
	\]
	\[
	\leq 3\tilde{c}\, \E (\|X_{i_l}\|^p)  M^{a-p}.
	\]
	
	Therefore, (\ref{est1}) is less than or equal to $6 \tilde{c} \,u\,Lip(F) C^p M^{a-p}$. An analogous bound holds for $|Cov(\tilde{F}^{(M)}(X_i^*),\tilde{G}(X_j^*)-\tilde{G}^{(M)}(X_j^*))|$.
	Moreover, $\tilde{F}^{(M)}$ is a Lipschitz function on $\R^{mu}$.  Let $Z=(Z_1,\ldots,Z_u)$ and $W=(W_1,\ldots,W_u)$ where $Z_i, W_i \in \R^{m}$ for $i=1,\ldots,u$, then 
	\begin{align*}
		&|\tilde{F}^{(M)}(Z)-\tilde{F}^{(M)}(W)| =|F(h(Z^{(M)}_1),\ldots,h(Z^{(M)}_u))-F(h(W^{(M)}_1), \ldots, h(W^{(M)}_u))|\\
		&\leq Lip(F) \sum_{l=1}^u \|h(Z_{l}^{(M)})-h(W_{l}^{(M)})\| \\
		&\leq \tilde{c} \, Lip(F) \sum_{l=1}^u  \|Z_{l}^{(M)} -W_{l}^{(M)}\|(1+ \| Z_{l}^{(M)}\|^{a-1} \|W_l^{(M)}\|^{a-1})\\
		&\leq 3\tilde{c} \, Lip(F) M^{a-1} \sum_{l=1}^u  \|Z_{l}^{(M)} -W_{l}^{(M)}\| \leq 3\tilde{c} \, Lip(F) M^{a-1} \sum_{l=1}^u  \|Z_{l} -W_{l}\|.
	\end{align*}
	The last inequality holds because the orthogonal projection is a Lipschitz function with $Lip(P_M)=1$. Note that this latter property also implies that the process $X_i^{(M)}$ is itself $\eta$-weakly dependent.
	
	A similar argument can be used to prove that also the function $G^{(M)}$ is Lipschitz on $\R^{mu}$. Hence, because $X_t$ is $\eta$-weakly dependent,
	\begin{align*}
		|Cov(\tilde{F}^{(M)}(X_i^*), \tilde{G}^{(M)}(X_j^*))|
		&\leq  c \,( u Lip(\tilde{F}^{(M)})  +  v Lip(\tilde{G}^{(M)}) ) \eta_X(r)    \\
		&= 3 \tilde{c} \, c \, M^{a-1}  \, ( u Lip(F)  +  v Lip(G) )  \eta_X(r)
	\end{align*}
	To conclude, (\ref{bound}) is less than or equal to
	\[
	6 \tilde{c} \,(u Lip(F) + v Lip(G)) \Big( \frac{c M^{a-1}}{2} \eta_X(r) + C^p M^{a-p} \Big).
	\]
	The process $(Y_t)_{t \in \R}$ defined by $Y_t=h(X_t)$ is then $\eta$-weakly dependent. By choosing  $M= \eta_X(r)^{\frac{1}{1-p}}$ and calling $\mathcal{C}=6\tilde{c} (C^p+\frac{c}{2})$, we obtain that
	\[
	\eta_Y(r)= \mathcal{C} \, \eta_X(r)^{\frac{p-a}{p-1}}.
	\]

\end{proof}

\end{document}